\documentclass[12pt]{amsart}
\usepackage{amssymb}
\usepackage{amsfonts}
\usepackage{indentfirst}
\usepackage{amsopn}
\usepackage{amsmath}
\usepackage{amsthm}
\usepackage{amscd}
\usepackage{graphicx}
\usepackage{epstopdf}

\usepackage[english]{babel}
\usepackage{amsmath}
\usepackage{graphicx}
\usepackage{hyperref}
\usepackage{enumerate}

\setlength{\textwidth}{17cm}
\setlength{\textheight}{22cm}
\setlength{\oddsidemargin}{0cm}
\setlength{\evensidemargin}{0cm}
\setlength{\marginparwidth}{2cm}
\hoffset=0truecm
\voffset=-1truecm
\footskip = 30pt
\marginparsep=-0.1cm

\newtheorem{lemma}{Lemma}[section]
\newtheorem{theorem}[lemma]{Theorem}
\newtheorem{proposition}[lemma]{Proposition}
\newtheorem{corollary}[lemma]{Corollary}

\newtheorem{remark}[lemma]{Remark}

\numberwithin{equation}{section}



\newcommand{\beq}{\begin{equation}}
\newcommand{\eeq}{\end{equation}}
\newcommand{\be}{\begin{equation*}}
\newcommand{\ee}{\end{equation*}}

\newcommand{\RE}{\mathbb R}

\newcommand{\NA}{\mathbb N}





\usepackage{latexsym,epsfig,bm,xcolor}


\begin{document}

\title[Standing waves of the quintic NLS equation]{\bf Standing waves of the quintic NLS equation \\ on the tadpole graph}

\author[D. Noja]{Diego Noja}
\address[D. Noja]{Dipartimento di Matematica e Applicazioni, Universit\`a di Milano Bicocca, via R. Cozzi 55, 20126 Milano, Italy}
\email{diego.noja@unimib.it}

\author[D.E. Pelinovsky]{Dmitry E. Pelinovsky}
\address[D.E. Pelinovsky]{Department of Mathematics and Statistics, McMaster University,
Hamilton, Ontario, Canada, L8S 4K1}
\email{dmpeli@math.mcmaster.ca}

\date{\today}
\maketitle

\begin{abstract}
The tadpole graph consists of a circle and a half-line attached at a vertex. We analyze standing waves of the
nonlinear Schr\"{o}dinger equation with quintic power nonlinearity equipped with the Neumann-Kirchhoff boundary conditions at the vertex.
The profile of the standing wave with the frequency $\omega\in (-\infty,0)$ is characterized as a
global minimizer of the quadratic part of energy constrained to the unit sphere in $L^6$.
The set of standing waves includes the set of ground states, which are the global minimizers of the energy
at constant mass ($L^2$-norm), but it is actually wider. While ground states exist only for a certain interval of masses,
the standing waves exist for every $\omega \in (-\infty,0)$ and correspond to a bigger interval of masses. It is proven
that there exist critical frequencies $\omega_1$ and $\omega_0$ with $-\infty < \omega_1 < \omega_0 < 0$
such that the standing waves are the ground state for $\omega \in [\omega_0,0)$,
local constrained minima of the energy for $\omega \in (\omega_1,\omega_0)$ and
saddle points of the energy at constant mass for $\omega \in (-\infty,\omega_1)$.
Proofs make use of the variational methods and the analytical theory for differential equations.
\end{abstract}
\vskip10pt
\begin{footnotesize}
 \emph{Keywords:} Quantum graphs; non-linear Schr\"odinger equation; variational techniques, period function.

 \emph{MSC 2010:}  35Q55, 81Q35, 35R02.
 \end{footnotesize}

\section{Introduction}

The analysis of nonlinear PDEs on metric graphs has recently attracted a certain attention \cite{NojaBook}.
One of the reason is potential applicability of this analysis to physical models such as
Bose-Einstein condensates trapped in narrow potentials with T-junctions or X-junctions,
or networks of optical fibers. Another reason is the possibility to rigorously prove
a complicated behavior of the standing waves due to the interplay between geometry
and nonlinearity, which is hardly accessible in higher dimensional problems.

The most studied nonlinear PDE on a metric graph $\mathcal G$ is the nonlinear Schr\"{o}dinger (NLS)
equation with power nonlinearity, which we take in the following form:
\begin{equation}\label{tNLS}
i \frac d {dt} \Psi =  \Delta \Psi + (p+1)| \Psi |^{2p} \Psi,
\end{equation}
where the wave function $\Psi(t,\cdot)$ is defined componentwise on edges of the graph $\mathcal G $ subject to
suitable boundary conditions at vertices of the graph $\mathcal G $. The Laplace operator $\Delta$ and
the power nonlinearity are also defined componentwise. The natural Neumann--Kirchhoff boundary conditions
are typically added at the vertices to ensure that $\Delta$ is self-adjoint in $L^2(\mathcal G)$
with a dense domain $D(\Delta) \subset L^2(\mathcal G)$ \cite{BK13,Exner}.

The Cauchy problem for the NLS equation (\ref{tNLS}) is locally well-posed in the energy space
$H^1_{\rm C}(\mathcal G) := H^1(\mathcal G) \cap C^0(\mathcal G)$, which is the space of
the componentwise $H^1$ functions that are continuous across the vertices of the graph $\mathcal G$
\cite{CFN17,GO19,KP2}. The following two conserved quantities
of the NLS equation (\ref{tNLS}) are defined in $H^1_{\rm C}(\mathcal G)$, namely {\it the mass}
\beq
\label{mass}
Q(\Psi) := \| \Psi \|_{L^2(\mathcal G)}^2
\eeq
and {\it the total energy}
\beq
E(\Psi) = \| \nabla \Psi \|_{L^2(\mathcal G)}^2 - \|\Psi \|_{L^{2p+2}(\mathcal G)}^{2p+2}.
\eeq
Due to conservation of mass and energy and the Gagliardo--Nirenberg inequality (for which see \cite{AST15,AST16}),
unique local solutions to the NLS equation (\ref{tNLS})
in $H^1_{\rm C}(\mathcal G)$ are extended globally in time for subcritical ($p<2$) nonlinearities
and for the critical ($p=2$) nonlinearity in the case of small initial data.

Standing waves of the NLS equation \eqref{tNLS} are solutions of the form
$\Psi(t,x) = e^{i\omega t} \Phi(x)$, where $\Phi$ satisfies the elliptic system
\beq \label{eqstazp}
-\Delta \Phi - (p+1) |\Phi|^{2p} \Phi = \omega \Phi
\eeq
and $\omega \in \RE$ is a real parameter. We refer to $\omega$ as the frequency of the standing wave and
to $\Phi$ as to the spatial profile of the standing wave. The stationary NLS equation (\ref{eqstazp})
is the Euler--Lagrange equation for the augmented energy functional or simply \emph{the action}
\beq
\label{action}
S_{\omega}(U) := E(U) - \omega Q(U),
\eeq
which is defined for every $U \in H^1_{\rm C}(\mathcal G)$. If the infimum of the constrained minimization problem:
\begin{equation}
\label{ground-state}
\mathcal{E}_{\mu} = \inf_{U \in H^1_{\rm C}(\mathcal{G})} \left\{ E(U) : \;\; Q(U) = \mu \right\},
\end{equation}
is finite and is attained at $\Phi \in H^1_{\rm C}(\mathcal{G})$ so that $\mathcal{E}_{\mu} = E(\Phi)$
and $\mu = Q(\Phi)$, we say that this $\Phi$ is {\em the ground state}. By the usual bootstrapping arguments, the same $\Phi$ is also
a strong solution $\Phi \in H^2_{\rm NK}(\mathcal{G})$ to the stationary NLS equation (\ref{eqstazp}) with the
corresponding Lagrange multiplier $\omega$ which depends on the mass $\mu$. Here $H^2_{\rm NK}(\mathcal{G})$ is the space of
the componentwise $H^2$ functions that satisfy the natural Neumann--Kirchhoff boundary conditions
across the vertices of the graph $\mathcal G$. This space coincides with the
domain $D(\Delta)$ of the Laplace operator $\Delta$.

Ground states on metric graphs with Neumann--Kirchhoff boundary conditions at the vertices of $\mathcal{G}$
and no external potentials exist under rather restrictive topological conditions (see \cite{AST15,AST16,AST17,AST19}).
When delta-impurities at the vertices or external potentials give rise to a negative eigenvalue of
the linearized operator at the zero solution, a ground state always exist in the subcritical \cite{CFN17},
critical \cite{C18}, and supercritical \cite{Ardila18} cases.

The aim of this paper is to characterize the standing waves for the $L^2$-critical (quintic, $p=2$) NLS equation
in the particular case of the tadpole graph $\mathcal{T}$.
The tadpole graph $\mathcal{T}$ is the metric graph $\mathcal{G}$ constituted by a circle
and a half-line attached at a single vertex.
We normalize the interval for the circle to $[-\pi,\pi]$
with the end points connected to the half-line $[0,\infty)$ at a single vertex. The natural Neumann--Kirchhoff
boundary conditions for the two-component vectors $U := (u,v) \in H^2(-\pi,\pi)\times H^2(0,\infty)$ are given by
\beq
\label{dd}
\left\{ \begin{aligned}
&  u(\pi) = u(-\pi) = v(0), \\
& u'(\pi) - u'(-\pi) = v'(0).
\end{aligned} \right.
\eeq
The Laplace operator $\Delta : H^2_{\rm NK}(\mathcal{T}) \subset L^2(\mathcal{T}) \mapsto L^2(\mathcal{T})$
with the operator domain
\beq\label{DomainDelta}
H^2_{\rm NK}(\mathcal{T}) := \left\{ u \in H^2(-\pi,\pi), \;\; v \in H^2(0,\infty) : \;\; \mbox{\rm satisfying} \; (\ref{dd}) \right\}
\eeq
is self-adjoint in $L^2(\mathcal{T}) := L^2(-\pi,\pi) \times L^2(0,\infty)$.
Integrating by parts yields for every $U = (u,v) \in H^2_{\rm NK}(\mathcal{T})$:
\beq
\label{by-parts}
\langle -\Delta U, U \rangle = \| \nabla U \|^2_{L^2(\mathcal{T})} + v'(0) v(0) - u'(\pi) u(\pi) + u'(-\pi) u(-\pi) =
\| \nabla U \|^2_{L^2(\mathcal{T})} \geq 0,
\eeq
which implies that $\sigma(-\Delta) \subseteq [0, \infty)$. Appendix \ref{app-spectrum} gives the precise characterization
of $\sigma(-\Delta) = [0,\infty)$ which includes the absolute continuous part of the spectrum denoted by $\sigma_{\rm ac}(-\Delta)$ and a countable set of
embedded eigenvalues.

The tadpole graph $\mathcal{T}$ has been proven to be a good testing ground for a more general study.
A first classification of standing waves for the cubic ($p=1$) NLS on the tadpole graph
was given in \cite{cfn15}, then it was extended to the subcritical case $p\in (0,2 )$ in \cite{[NPS15]} where
orbital stability of some standing waves has been considered.

By Theorem 2.2 in \cite{AST15} for the subcritical case $p \in (0,2)$,
$\mathcal{E}_{\mu}$ in (\ref{ground-state}) satisfies the bounds
\begin{equation}
\label{bounds-on-E}
\mathcal{E}_{\mathbb{R^+}} \leq \mathcal{E}_{\mu} \leq \mathcal{E}_{\mathbb{R}},
\end{equation}
where $\mathcal{E}_{\mathbb{R^+}}$ is the energy of a half-soliton of the NLS equation on a half-line with the same mass $\mu$
and $\mathcal{E}_{\mathbb{R}}$ is the energy of a full soliton on a full line with the same mass $\mu$. By Theorem
3.3 and Corollary 3.4 in \cite{AST16}, the infimum is attained if there exists $\Psi_* \in H^2_{\rm NK}(\mathcal{G})$
such that $E(\Psi_*) \leq \mathcal{E}_{\mathbb{R}}$. Based on this criterion, it was shown in \cite{AST16} that
the subcritical NLS equation for the tadpole graph $\mathcal{T}$ admits the ground state $\Phi$ {\it for all
positive values of the mass $\mu$}.
Moreover, by using suitable symmetric rearrangements it was shown in \cite{AST16}
that the ground state $\Phi$ is given by a monotone piece of soliton on the half-line glued
with a piece of a periodic function on the circle, with a single maximum sitting at the antipodal point to the vertex
(see also \cite{cfn15,[NPS15]}).

In the critical power $p = 2$, it was shown in \cite[Theorem 3.3]{AST17} that the ground state
on the metric graph $\mathcal{G}$ with exactly one half-line (e.g., on the tadpole graph $\mathcal{T}$)
is attained if and only if $\mu \in (\mu_{\mathbb{R}^+},\mu_{\mathbb{R}}]$, where
$\mu_{\mathbb{R}^+}$ is the mass of the half-soliton of the NLS equation on the half-line
and $\mu_{\mathbb{R}}$ is the mass of the full-soliton on the full line,
both values are independent on $\omega$ for $p = 2$.
Indeed, let $\varphi_{\omega}(x) = |\omega|^{1/4} {\rm sech}^{1/2}(2 \sqrt{|\omega|} x)$ be
a soliton of the quintic NLS equation on the line centered at $x = 0$, then we compute
\begin{equation}
\label{mass-half-soliton}
\mu_{\RE^+} = \| \varphi_{\omega} \|_{L^2(\RE^+)}^2 = \frac{\pi}{4}
\end{equation}
and
\begin{equation}
\label{mass-soliton}
\mu_{\RE}  = \| \varphi_{\omega} \|_{L^2(\RE)}^2 =\frac{\pi}{2}.
\end{equation}
Thus, the ground state on the tadpole graph $\mathcal{T}$ exists if and only if
$\mu \in (\mu_{\mathbb{R}^+},\mu_{\mathbb{R}}]$; moreover, $\mathcal{E}_{\mu} < 0$.
It was also shown in \cite[Proposition 2.4]{AST17} that $\mathcal{E}_{\mu} = 0$ if
$\mu \leq \mu_{\mathbb{R}^+}$ and $\mathcal{E}_{\mu} = -\infty$ if $\mu > \mu_{\mathbb{R}}$.
We conjecture that this behavior of $\mathcal{E}_{\mu}$ for the critical NLS equation (\ref{tNLS}) with $p = 2$
is associated to the decay of strong solutions $\Psi(t,\cdot) \in H^2_{\rm NK}(\mathcal{T})$
to zero as $t \to \infty$ if the mass $\mu$ of the initial data $\Psi(0,\cdot) = \Psi_0$
satisfies $\mu \leq \mu_{\mathbb{R}^+}$ and the blow-up in a finite time $t$ if $\mu > \mu_{\mathbb{R}}$.
The latter behavior is known on the full line \cite{Cazenave} but it has not been proven yet
in the context of the unbounded metric graph $\mathcal{T}$ (strong instability
of bound states on star graphs was recently analyzed in \cite{GO19}).

The main novelty of this paper is to explore the variational methods and the analytical theory
for differential equations in order to construct the standing waves with profile $\Phi$ satisfying
the elliptic system (\ref{eqstazp}) with $p = 2$, rewritten again as
\beq \label{eqstaz}
-\Delta \Phi - 3 \Phi^5 = \omega \Phi.
\eeq
The variational construction relies on the following constrained minimization problem:
\begin{equation}
\label{infB}
\mathcal{B}(\omega) = \inf_{U \in H^1_{\rm C}(\mathcal{T})} \left\{ B_{\omega}(U) : \quad \| U \|_{L^{6}(\mathcal{T})} = 1 \right\}, \quad \omega < 0,
\end{equation}
where
\begin{equation}\label{Bfunctional}
B_{\omega}(U) := \| \nabla U \|_{L^2(\mathcal{T})}^2 - \omega \| U \|^2_{L^2(\mathcal{T})}.
\end{equation}
We are not aware of previous applications of the variational problem (\ref{infB})
in the context of the NLS equation on metric graphs. The variational problem (\ref{infB}) gives generally a larger set of standing waves
compared to the set of ground states in the variational problem (\ref{ground-state}). This
is relevant for the orbital stability of the standing waves.

Versions of the variational problem (\ref{infB}) arise in the determination of
the best constant of the Sobolev inequality, which is equivalent to the Gagliardo--Nirenberg inequality
in $\RE^n$ (see, for example, \cite{Agueh06, Agueh08, DELL14, MorPizz18} and references therein).
However, as follows from \cite{AST17} and it is shown in Lemma \ref{lemma-1-4} below,
the minimizer of (\ref{infB}) does not give the best constant in the Gagliardo--Nirenberg inequality
on the tadpole graph $\mathcal{T}$.

Another well-known variational problem is the minimization of the action functional (\ref{action})
on the Nehari manifold. This approach was used in \cite{FOO2008} for the so-called delta potential
on the line and generalized in \cite{[ACFN14]} in the context of a star graph with a delta potential
 at the vertex. More recently, the variational problems at the Nehari manifolds were analyzed in \cite{AkPankov19,Pankov18}.
In Appendix \ref{app-relation}, we show how the constrained minimization problem (\ref{infB}) is related to the minimization of
the action  (\ref{action}) on the Nehari manifold defined by the constraint $B_{\omega}(U) = 3 \| U \|^6_{L^6(\mathcal{T})}$.

We shall now present the main results of this paper. The first theorem
states that the variational problem (\ref{infB}) determines
a family of standing waves $\Phi(\cdot,\omega)$ to the elliptic system (\ref{eqstaz}) for every $\omega < 0$.

\begin{theorem}
For every $\omega < 0$, there exists a global minimizer $\Psi(\cdot,\omega) \in H^1_{\rm C}(\mathcal{T})$
of the constrained minimization problem (\ref{infB}), which yields a strong solution
$\Phi(\cdot,\omega) \in H^2_{\rm NK}(\mathcal{T})$ to the stationary NLS equation (\ref{eqstaz}).
The standing wave $\Phi$ is real up to the phase rotation, positive up to the sign choice, symmetric
on $[-\pi,\pi]$ and monotonically decreasing on $[0,\pi]$ and $[0,\infty)$.
\label{theorem-existence}
\end{theorem}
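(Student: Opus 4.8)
The plan is to solve the constrained problem (\ref{infB}) by the direct method, using a symmetric rearrangement to extract the qualitative properties and, crucially, to restore the compactness that is lost on the half-line. First I would observe that for $\omega<0$ the functional $B_{\omega}$ in (\ref{Bfunctional}) equals $\|\nabla U\|_{L^2(\mathcal{T})}^2+|\omega|\,\|U\|_{L^2(\mathcal{T})}^2$, hence is equivalent to the squared $H^1_{\rm C}(\mathcal{T})$ norm; by the Sobolev embedding $H^1_{\rm C}(\mathcal{T})\hookrightarrow L^6(\mathcal{T})$ this gives $0<\mathcal{B}(\omega)<\infty$. Given a minimizing sequence $\{U_n\}$ with $\|U_n\|_{L^6(\mathcal{T})}=1$ and $B_{\omega}(U_n)\to\mathcal{B}(\omega)$, I would replace each $U_n$ by its modulus $|U_n|$, using the diamagnetic inequality $\|\nabla|U_n|\|\le\|\nabla U_n\|$, which preserves every $L^p$ norm and continuity at the vertex; this reduces matters to nonnegative real functions and already yields reality up to phase and positivity up to sign. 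Then I would apply the tadpole rearrangement of \cite{AST16}, which replaces each $U_n$ by a function in $H^1_{\rm C}(\mathcal{T})$ that is symmetric on $[-\pi,\pi]$ with maximum at the antipodal point $x=0$ and nonincreasing on $[0,\infty)$, preserves all $L^p$ norms, and does not increase $\|\nabla U_n\|_{L^2(\mathcal{T})}$ by the P\'olya--Szeg\H{o} inequality. Since $-\omega>0$ and the $L^2$ norm is unchanged, $B_{\omega}$ does not increase, so the rearranged sequence is still minimizing.

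The decisive point, and the main obstacle, is compactness: the embedding $H^1\hookrightarrow L^6$ on the half-line is not compact, so a priori the $L^6$ constraint may be lost in the weak limit. Here the rearrangement pays off. For a nonincreasing function on $[0,\infty)$ one has $x\,u(x)^2\le\int_0^x u^2\le\|u\|_{L^2}^2$, and since $|\omega|\,\|U_n\|_{L^2(\mathcal{T})}^2\le B_{\omega}(U_n)$ is bounded, this yields the uniform bound $u_n(x)\le C x^{-1/2}$ on the rearranged sequence. Consequently $\int_R^{\infty}|u_n|^6\,dx\le C\int_R^{\infty}x^{-3}\,dx\to 0$ uniformly in $n$ as $R\to\infty$. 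Passing to a weakly convergent subsequence $U_n\rightharpoonup\Psi$ in $H^1_{\rm C}(\mathcal{T})$, the compact embedding on the circle and on every bounded interval $[0,R]$ of the half-line gives strong $L^6$ convergence there, while the uniform tail estimate controls the remainder; hence $\|\Psi\|_{L^6(\mathcal{T})}=1$, so $\Psi$ is admissible. Weak lower semicontinuity of $B_{\omega}$ then gives $B_{\omega}(\Psi)\le\mathcal{B}(\omega)$, while admissibility gives the reverse inequality, so $\Psi$ is a minimizer; the symmetry and the nonincreasing profile are inherited in the pointwise limit.

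It remains to produce the strong solution and upgrade the monotonicity. Writing the Euler--Lagrange equation for (\ref{infB}) with Lagrange multiplier $\La$, the minimizer satisfies $-\De\Psi-\omega\Psi=\La\,\Psi^5$ weakly; testing against $\Psi$ gives $\La=B_{\omega}(\Psi)=\mathcal{B}(\omega)>0$, and the Neumann--Kirchhoff conditions (\ref{dd}) emerge from the vanishing of the boundary terms in the weak formulation over $H^1_{\rm C}(\mathcal{T})$. Since $\Psi\in H^1_{\rm C}(\mathcal{T})\hookrightarrow L^{\infty}$ componentwise, the right-hand side lies in $L^2$, so elliptic bootstrapping yields $\Psi\in H^2_{\rm NK}(\mathcal{T})$, and the rescaling $\Phi:=(\mathcal{B}(\omega)/3)^{1/4}\Psi$ converts the multiplier into the coefficient $3$, producing a strong solution of (\ref{eqstaz}). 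Finally, nonnegativity together with the ODE $-\Phi''=\omega\Phi+3\Phi^5$ on each edge forces $\Phi>0$ by uniqueness, since a double zero would give $\Phi\equiv0$; strict monotonicity on $[0,\pi]$ and on $[0,\infty)$ follows because a nonincreasing solution with an interior critical point would be locally constant by ODE uniqueness, which is incompatible both with the decay $\Phi(x)\to0$ on the half-line and with matching the vertex conditions. I expect the non-compactness on the half-line to be the crux, handled precisely by the uniform tail decay of the rearranged functions.
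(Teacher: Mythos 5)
Your strategy is genuinely different from the paper's at the decisive compactness step. The paper (Lemmas \ref{lemma-1-0} and \ref{lemma-1-2}) runs a Brezis--Lieb dichotomy reducing to $\gamma\in\{0,1\}$ and then excludes the escaping case $\gamma=0$ by showing that escape along the half-line forces $\mathcal{B}(\omega)\ge\mathcal{B}_{\mathbb{R}}(\omega)$, while an explicit truncated-soliton trial function gives $\mathcal{B}(\omega)<\mathcal{B}_{\mathbb{R}}(\omega)$; that strict inequality is the heart of their proof and is reused in Lemma \ref{lemma-1-4}. You instead rearrange the minimizing sequence itself and use the Strauss-type decay $u_n(x)\le C x^{-1/2}$ of monotone tails to prevent loss of $L^6$ norm at infinity, so no trial-function computation is needed. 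Your route buys economy and makes the qualitative properties of $\Psi$ come out automatically; the paper's route isolates the quantitative bounds $\mathcal{B}_{\mathbb{R}^+}(\omega)<\mathcal{B}(\omega)<\mathcal{B}_{\mathbb{R}}(\omega)$, which have independent use.

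The load-bearing step that you assert but do not justify is that the ``tadpole rearrangement'' is a well-defined operation on an \emph{arbitrary} nonnegative $U\in H^1_{\rm C}(\mathcal{T})$ which (i) lands back in $H^1_{\rm C}(\mathcal{T})$ and (ii) does not increase the Dirichlet integral. For (i) the cut level $c$ must be chosen so that $|\{U>c\}|=2\pi$, the superlevel sets of measure $\le 2\pi$ going symmetrically onto the circle and the rest decreasingly onto the half-line; this is what makes the rearranged function continuous at the vertex. For (ii) the P\'olya--Szeg\H{o} argument requires that a.e.\ level $t>c$ have at least two preimages. This does hold on $\mathcal{T}$ (a bounded open proper subset of measure $<2\pi$ cannot have a single boundary point, since deleting a circle point does not disconnect $\mathcal{T}$ and deleting a half-line point creates a bounded component of measure $>2\pi$), but it is exactly the property that fails on a star graph, where your argument would wrongly ``prove'' existence were the rearranged half-soliton not disqualified by vertex discontinuity. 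So this verification cannot be waved through by citing \cite{AST16}, where the rearrangement is applied to minimizers and explicit competitors rather than stated as a norm-nonincreasing map on all of $H^1_{\rm C}(\mathcal{T})$; you must supply the preimage count. A smaller defect: your strict-monotonicity argument is wrong as stated --- a solution of $-\Phi''=\omega\Phi+3\Phi^5$ with an interior critical point is not ``locally constant by ODE uniqueness.'' Strictness instead follows from the first integral (\ref{first}): a zero of $\Phi'$ at a non-equilibrium value forces $\Phi$ to increase on one side, contradicting the non-increasing profile inherited from the rearrangement.
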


The main idea in the proof of Theorem \ref{theorem-existence} is a compactness argument
which eliminates the possibility that the minimizing sequence splits or escapes to infinity along the unbounded edge of the
tadpole graph $\mathcal{T}$.

In what follows, we usually omit the dependence on $\omega$ for $\Psi(\cdot,\omega)$ and $\Phi(\cdot,\omega)$.
The linearization of the stationary NLS equation (\ref{eqstaz}) around $\Phi$ is defined by the self-adjoint operator
$\mathcal{L} : H^2_{\rm NK}(\mathcal{T}) \subset L^2(\mathcal{T}) \mapsto L^2(\mathcal{T})$ given by the
following differential expression:
\beq \label{Jacobian}
\mathcal{L} = -\Delta - \omega - 15 \Phi^{4}.
\eeq
Since it is self-adjoint, the spectrum of $\mathcal{L}$ in $L^2(\mathcal{T})$ is a subset of real line.
Since $\Phi(x) \to 0$ as $x \to \infty$ exponentially on the half-line, application of Weyl's Theorem yields that
the absolutely continuous spectrum of $\mathcal{L}$ is given by
\beq
\sigma_{\rm a.c.}(\mathcal{L}) = \sigma(-\Delta - \omega) = [|\omega|,\infty),
\eeq
and that there are only finitely many eigenvalues of $\mathcal{L}$ located below $|\omega|$
with each eigenvalue having finite multiplicity.

Let $n(\mathcal{L})$ be the Morse index (the number of negative
eigenvalues of $\mathcal{L}$ with the account of their multiplicities) and $z(\mathcal{L})$ be the nullity
index of $\mathcal{L}$ (the multiplicity of the zero eigenvalue of $\mathcal{L}$). Since
\beq
\label{negative-result}
\langle \mathcal{L} \Phi, \Phi \rangle_{L^2(\Gamma_N)} = - 12 \| \Phi \|_{L^6(\mathcal{T})}^{6} < 0,
\eeq
there is always a negative eigenvalue of $\mathcal{L}$ so that $n(\mathcal{L}) \geq 1$.
Since $\Phi$ is obtained from the variational problem (\ref{infB}) with only one constraint, by Courant's Min-Max Theorem,
we have $n(\mathcal{L}) \leq 1$, hence $n(\mathcal{L}) = 1$. In addition, we prove that the operator $\mathcal{L}$ is non-degenerate for every $\omega < 0$ with $z(\mathcal{L}) = 0$.
These facts are collected together in the following theorem.

\begin{theorem}
Let $\Phi \in H^2_{\rm NK}(\mathcal{T})$ be a solution to the stationary NLS equation (\ref{eqstaz})
for $\omega < 0$ constructed in Theorem \ref{theorem-existence}. Then,
$n(\mathcal{L}) = 1$ and $z(\mathcal{L}) = 0$ for every $\omega < 0$.
\label{theorem-degeneracy}
\end{theorem}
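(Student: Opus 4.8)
The statement on the Morse index is already reduced in the text to two one–line observations—$n(\mathcal L)\ge 1$ from the sign of $\langle\mathcal L\Phi,\Phi\rangle$ in (\ref{negative-result}), and $n(\mathcal L)\le 1$ from Courant's min–max applied to the single constraint in (\ref{infB})—so I would only record these and concentrate the work on proving $z(\mathcal L)=0$.

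The plan is to exploit the reflection symmetry $x\mapsto -x$ of the circle, under which $\Phi$ is invariant, since by Theorem \ref{theorem-existence} its circle component $u$ is even. As $\mathcal L$ commutes with this reflection $R$, its kernel splits into an $R$-even and an $R$-odd part, and I would show each is trivial separately. In the odd sector a kernel element has odd $u$-component on $[-\pi,\pi]$ and vanishing half-line component, so by the conditions (\ref{dd}) the problem collapses to the Dirichlet problem $-\psi''-\omega\psi-15u^4\psi=0$, $\psi(0)=\psi(\pi)=0$ on $[0,\pi]$. The decisive remark is that $u'$ solves this ODE with $u'(0)=0$ and, by the strict monotonicity of $u$ on $[0,\pi]$ from Theorem \ref{theorem-existence}, has no zero in the open interval $(0,\pi)$. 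A Sturm comparison then places $0$ at or below the first Dirichlet eigenvalue, with equality precisely when $u'(\pi)=0$; hence the odd sector is strictly positive, and in particular has trivial kernel, as soon as $u'(\pi)\ne0$.

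For the even sector I would set up a matching computation. On the half-line the linearized equation $-\psi''+(|\omega|-15v^4)\psi=0$ has a one-dimensional space of decaying solutions, spanned by $v'$ (the derivative of the soliton tail), while on $[0,\pi]$ I single out the Neumann solution $\phi_N$ normalized by $\phi_N(0)=1$, $\phi_N'(0)=0$, since the even sector forces $\psi'(0)=0$ at the antipodal point $x=0$. Imposing (\ref{dd}) in the even sector, namely $\psi_1(\pi)=\psi_2(0)$ and $2\psi_1'(\pi)=\psi_2'(0)$, a nontrivial kernel element exists if and only if the determinant $\phi_N(\pi)\,v''(0)-2\phi_N'(\pi)\,v'(0)$ vanishes. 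I would prove this quantity is nonzero for every $\omega<0$ by inserting the explicit profile: $v$ is a piece of $\varphi_\omega$, so $v'(0),v''(0)$ are explicit, while $u$ is the relevant Jacobi-elliptic (dnoidal) solution of $-u''=\omega u+3u^5$, so that $\phi_N(\pi),\phi_N'(\pi)$ are governed by the period function of the planar system $u''=-\omega u-3u^5$. The same explicit description supplies $u'(\pi)\ne0$ needed to close the odd sector.

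The main obstacle is the even-sector determinant: unlike the odd sector it cannot be settled by a soft oscillation argument and requires quantitative control of the linearized circle solution $\phi_N$ at the vertex. I expect to reduce the nonvanishing to a monotonicity property of the period function $T$ of the nonlinear oscillator as a function of energy/amplitude, in the spirit of Schaaf's and Chicone's theory, so that $\sgn\bigl(\phi_N(\pi)\,v''(0)-2\phi_N'(\pi)\,v'(0)\bigr)$ is pinned down once and for all by $\sgn T'$. Establishing that monotonicity, together with ruling out that the half-line piece is ever the soliton peaked exactly at the vertex—which is the only configuration forcing $v'(0)=0$ and $u'(\pi)=0$ simultaneously—is where the analytic work concentrates.
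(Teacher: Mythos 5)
Your reduction of the theorem is structurally the same as the paper's. The Morse index count is exactly the two observations you cite. For the kernel, the paper's Lemma \ref{lemma-3-1} likewise writes the circle component of a would-be kernel element as $\beta U' + \gamma W$ with $W$ the even solution, kills the odd part using $U'(\pi\varepsilon^2)=\frac{1}{2}\varphi'(a)\neq 0$ (your Sturm-comparison argument in the odd sector is a correct, slightly heavier alternative to this one-line elimination from the system \eqref{system-solution}), and reduces the even sector to precisely your $2\times 2$ vertex matching determinant, i.e.\ to the condition \eqref{BC-balance}. Up to and including that reduction your plan is sound and coincides with the paper's.

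The gap is that the decisive step --- non-vanishing of the even-sector determinant $\phi_N(\pi)v''(0)-2\phi_N'(\pi)v'(0)$ for every $\omega<0$ --- is left as an expectation rather than proved, and the reduction you anticipate (``pin down the sign once and for all by the sign of $T'$'') is not quite the mechanism that actually works. In the paper the even solution is realized concretely as $W=\partial_E U(\cdot\,;E)$, where $U(\cdot\,;E)$ is the even orbit of the first-order invariant \eqref{first}; differentiating the nonlinear vertex conditions \eqref{BC-U} in $\varepsilon$ gives \eqref{BC-W}, whence
$\frac{2W'(\pi\varepsilon^2)}{W(\pi\varepsilon^2)}=\frac{\varphi''(a)\left[a'(\varepsilon)-4\pi\varepsilon\right]}{\varphi'(a)\left[a'(\varepsilon)-\pi\varepsilon\right]}$,
and this can never equal the required ratio $\frac{\varphi''(a)}{\varphi'(a)}$ simply because $4\pi\varepsilon\neq\pi\varepsilon$ for $\varepsilon>0$. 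The monotonicity of the period function ($T'(U_0)<0$, Lemma \ref{lemma-2-1}) enters not through the sign of your determinant but only to guarantee that $a\mapsto\varepsilon(a)$ is invertible with $a'(\varepsilon)$ well defined and $E'(\varepsilon)\neq 0$; moreover a separate argument via the Wronskian-type identity \eqref{tech-constraint} is needed at the exceptional point where $\varphi''(a_0)=0$ and the ratio degenerates (this is also exactly the point where $1-3U^4(\pi\varepsilon^2)=0$). Without carrying out some version of this computation --- or an equivalent quantitative control of $\phi_N(\pi)$ and $\phi_N'(\pi)$ --- the heart of $z(\mathcal{L})=0$, and hence of the theorem, remains unproved.
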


The proof of Theorem \ref{theorem-degeneracy} relies on the dynamical system methods and the
analytical theory for differential equations. In particular, we construct the standing wave of Theorem \ref{theorem-existence} by
using orbits of a conservative system on a phase plane and by introducing the period function, whose analytical properties are useful to
prove monotonicity of parametrization of the standing wave in Lemma \ref{lemma-2-1}
and the non-degeneracy of the linearized operator $\mathcal{L}$ in Lemma \ref{lemma-3-1}.

It follows from the non-degeneracy of $\mathcal{L}$ that the map
$(-\infty,0) \ni \omega \mapsto \Phi(\cdot,\omega) \in H^2_{\rm NK}(\mathcal{T})$
is $C^1$. The following theorem presents the monotonicity properties of the mass $\mu(\omega) := Q(\Phi(\cdot,\omega))$
as a function of $\omega$ needed for analysis of orbital stability of the standing waves with profile $\Phi$.

\begin{theorem}
Let $\Phi(\cdot,\omega) \in H^2_{\rm NK}(\mathcal{T})$ be the solution to the stationary NLS equation (\ref{eqstaz})
for $\omega < 0$ constructed in Theorem \ref{theorem-existence}. Then,
the mapping $\omega \mapsto \mu(\omega)= Q(\Phi(\cdot,\omega))$ is $C^1$ for every $\omega < 0$ and satisfies
\begin{equation}
\label{mass-property-1}
\mu(\omega) \to \mu_{\RE^+} \;\; \mbox{\rm as} \;\; \omega \to 0
\quad \mbox{\rm and} \quad \mu(\omega) \to \mu_{\RE} \;\; \mbox{\rm as} \;\; \omega \to -\infty.
\end{equation}
Moreover, there exist $\omega_1$ and $\omega_0$ satisfying $-\infty < \omega_1 < \omega_0 < 0$
such that
\begin{equation}
\label{mass-property-2}
\mu'(\omega) > 0 \;\; \mbox{\rm for} \;\; \omega \in (-\infty,\omega_1) \quad \mbox{\rm and} \quad
\mu'(\omega) < 0 \;\; \mbox{\rm for} \;\; \omega \in (\omega_1,0)
\end{equation}
and
\begin{equation}
\label{mass-property-3}
\mu(\omega) \notin (\mu_{\RE^+},\mu_{\RE}] \;\; \mbox{\rm for} \;\; \omega \in (-\infty,\omega_0) \quad \mbox{\rm and} \quad
\mu(\omega) \in (\mu_{\RE^+},\mu_{\RE}] \;\; \mbox{\rm for} \;\; \omega \in [\omega_0,0).
\end{equation}
\label{theorem-persistence}
\end{theorem}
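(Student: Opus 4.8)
The plan is to reduce the problem to a planar Hamiltonian system and to analyze the resulting period function. By Theorem~\ref{theorem-existence} the profile $\Phi=(u,v)$ is real, positive and even on the circle with maximum at the antipodal point $x=0$ and monotone on $[0,\pi]$ and on $[0,\infty)$. Both components solve $-w''-3w^5=\omega w$, which has the first integral $H=\tfrac12 (w')^2+V(w)$ with $V(w)=\tfrac12\omega w^2+\tfrac12 w^6$ and, for $\omega<0$, a saddle at $w=0$ and centers at $w=\pm(|\omega|/3)^{1/4}$. The half-line component lies on the homoclinic orbit $H=0$ (it decays at infinity), so $v$ is the soliton tail, and writing $b:=v(0)=u(\pm\pi)$ for the vertex value and $a:=u(0)$ for the maximum on the circle, an elementary integration gives the half-line mass $\mu_{hl}=\tfrac12\arcsin\!\big(b^2/\sqrt{|\omega|}\big)$. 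The circle component lies on a periodic orbit with turning point $a$, and the Neumann--Kirchhoff conditions (\ref{dd}), together with the evenness $u'(-\pi)=-u'(\pi)$, collapse to the single algebraic relation $V(a)=\tfrac34 V(b)$, while the fixed circumference forces the length constraint $\pi=\int_b^a du/\sqrt{2(V(a)-V(u))}$.

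After the scaling $w=|\omega|^{1/4}W$ these become $\tilde V(A)=\tfrac34\tilde V(B)$ and $\pi\sqrt{|\omega|}=T(A,B)$, where $\tilde V(W)=\tfrac12(W^6-W^2)$, $A=|\omega|^{-1/4}a$, $B=|\omega|^{-1/4}b$ and $T(A,B)=\int_B^A dW/\sqrt{2(\tilde V(A)-\tilde V(W))}$; this isolates the entire $\omega$-dependence in the product $\pi\sqrt{|\omega|}$. The total mass becomes $\mu=\mu_c+\mu_{hl}$ with $\mu_c=2\int_B^A W^2\,dW/\sqrt{2(\tilde V(A)-\tilde V(W))}$ and $\mu_{hl}=\tfrac12\arcsin(B^2)$, each depending on $\omega$ only through the turning data $A(\omega),B(\omega)$. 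The $C^1$ regularity of $\omega\mapsto\mu(\omega)$ is inherited from the $C^1$ dependence $\omega\mapsto\Phi(\cdot,\omega)$, which follows from the non-degeneracy $z(\mathcal{L})=0$ of Theorem~\ref{theorem-degeneracy} via the implicit function theorem. For the limits (\ref{mass-property-1}) I would argue asymptotically: as $\omega\to-\infty$ the constraint forces $T\to\infty$, so the orbit approaches the homoclinic one, $A\to1$ and $B\to0$, and the integrals yield $\mu_c\to\pi/2$, $\mu_{hl}\to0$, hence $\mu\to\mu_{\RE}$; as $\omega\to0^-$ one has $T\to0$, so the arc degenerates, $A,B\to1$, $\mu_c\to0$, $\mu_{hl}\to\pi/4$, hence $\mu\to\mu_{\RE^+}$.

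The heart of the theorem is the monotonicity statement (\ref{mass-property-2}). I would parametrize the solution curve by the rescaled vertex value $B\in(0,1)$, determine $A=A(B)\in\big((1/3)^{1/4},1\big)$ from $\tilde V(A)=\tfrac34\tilde V(B)$, and recover $\omega$ from $\pi\sqrt{|\omega|}=T(A(B),B)=:T(B)$. The first task is to prove that the period function $T(B)$ is strictly decreasing from $+\infty$ to $0$; this makes $\omega(B)$ strictly increasing, so that $B\leftrightarrow\omega$ is a diffeomorphism of $(0,1)$ onto $(-\infty,0)$ and $\operatorname{sign}\mu'(\omega)=\operatorname{sign}\mu'(B)$. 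The second and decisive task is to show that $B\mapsto\mu(B)$ is \emph{unimodal}, i.e.\ $\mu'(B)$ vanishes at exactly one point $B_1$, with $\mu'>0$ on $(0,B_1)$ and $\mu'<0$ on $(B_1,1)$; setting $\omega_1:=\omega(B_1)$ then yields (\ref{mass-property-2}). Both facts are monotonicity properties of period-type integrals, and proving the single sign change of $\mu'$ is the main obstacle: it requires differentiating $\mu_c$ and $\mu_{hl}$ in $B$ (using the implicitly defined $A(B)$), removing the endpoint singularities by integration by parts or by passing to the energy variable, and establishing the sign of the resulting kernel. I expect this to reduce, after the usual manipulations, to the positivity of an explicit one-variable function that can be checked analytically; the competition between the decreasing circle mass $\mu_c$ and the increasing half-line mass $\mu_{hl}=\tfrac12\arcsin(B^2)$ is exactly what produces the interior maximum.

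Finally, (\ref{mass-property-3}) follows from the shape of $\mu$ just established. On $(-\infty,\omega_1)$ the function $\mu$ increases strictly from $\mu_{\RE}$, so $\mu(\omega_1)>\mu_{\RE}$ and $\mu(\omega)>\mu_{\RE}$ there; on $(\omega_1,0)$ it decreases strictly from $\mu(\omega_1)>\mu_{\RE}$ down to $\mu_{\RE^+}<\mu_{\RE}$, so by the intermediate value theorem there is a unique $\omega_0\in(\omega_1,0)$ with $\mu(\omega_0)=\mu_{\RE}$. Strict monotonicity on $(\omega_1,0)$ then gives $\mu(\omega)\in(\mu_{\RE^+},\mu_{\RE}]$ precisely for $\omega\in[\omega_0,0)$, while $\mu(\omega)>\mu_{\RE}$ for $\omega<\omega_0$, which is (\ref{mass-property-3}) and completes the proof.
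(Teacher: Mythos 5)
Your reduction is exactly the paper's: phase-plane analysis with the first integral, the vertex matching condition $\tilde V(A)=\tfrac34\tilde V(B)$ (the paper's $E=-\tfrac34[\varphi'(a)]^2$ in (\ref{a-to-energy})), the quadrature $\pi\sqrt{|\omega|}=T(A(B),B)$ identifying $\omega$ with the period function (\ref{period-function}), the mass written as circle integral plus $\tfrac12\arcsin(B^2)$ as in (\ref{mass-representation}), the $C^1$ regularity from $z(\mathcal{L})=0$ via the implicit function theorem, and the derivation of (\ref{mass-property-3}) from the limits and the unimodality. Your soft argument for the limits (\ref{mass-property-1}) (degeneration to the homoclinic orbit as $B\to0$ and collapse of the arc as $B\to1$) is less quantitative than the paper's Lemmas \ref{lemma-4-1} and \ref{lemma-5-1}, which compute full asymptotic expansions, but it suffices for the statement as given because the endpoint signs of $\mu'$ are recovered from the global monotonicity structure. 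The strict monotonicity of the period function that you list as your ``first task'' is already established in the paper as Lemma \ref{lemma-2-1} and can be cited.

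The genuine gap is the step you yourself identify as ``the main obstacle'': the proof that $\mu'(B)$ vanishes exactly once. You state that you ``expect this to reduce \ldots\ to the positivity of an explicit one-variable function,'' but this expectation is the entire content of Lemma \ref{lemma-mass-monotonicity}, and it does not follow from generic manipulations. Two specific difficulties must be overcome. First, differentiating the mass integral in $B$ produces an integrand with a non-integrable-looking factor $1/A'(u)$ at $u=U_*=3^{-1/4}$, which lies inside the integration interval whenever $B<U_*$; the paper removes it by integrating the exact differential $d\bigl(\tfrac{2pu^2[A(u)-A(U_*)]}{A'(u)}\bigr)$ with $p=\sqrt{E+A(u)}$, a choice tailored so that the combination $A(u)-A(U_*)$ cancels the zero of $A'$. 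Second, uniqueness of the critical point is not a consequence of the endpoint behaviour alone (a priori $\mu'$ could vanish three times); the paper gets it by showing $\mu'<0$ outright on $[U_*,1)$ and rewriting the equation $\mu'(U_0)=0$ on $(0,U_*)$ as $F(U_0)=G(U_0)$ with $F$ strictly decreasing (after a further integration by parts and a sign check of an explicit rational combination) and $G$ strictly increasing and surjective onto $(0,\infty)$. Without carrying out these computations, or an equivalent argument, the dichotomy (\ref{mass-property-2}) --- and hence (\ref{mass-property-3}) --- is not established.
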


The proof of the asymptotic limits (\ref{mass-property-1}) in Theorem \ref{theorem-persistence}
relies on the asymptotic methods involving power series expansions and properties of Jacobian elliptic functions.
The proof of monotonicity (\ref{mass-property-2}) is performed with the analytical theory for differential equations.
The final property (\ref{mass-property-3}) follows from (\ref{mass-property-1}) and (\ref{mass-property-2}).

Since $n(\mathcal{L}) = 1$ and $z(\mathcal{L}) = 0$ by Theorem \ref{theorem-degeneracy},
the following Corollary \ref{corollary-persistence} follows from Theorem \ref{theorem-persistence}
by the orbital stability theory of standing waves (see the recent application of this theory on star graphs in \cite{KP1,KP2,KGP}).

\begin{corollary}
\label{corollary-persistence}
The standing wave $\Phi(\cdot,\omega)$ for $\omega \in (\omega_1,0)$ is
a local constrained minimizer of the energy $E(U)$ subject to the constraint $Q(U) = \mu(\omega)$,
whereas for $\omega \in (-\infty,\omega_1)$, it is a saddle point of the energy $E(U)$ subject
to the constraint $Q(U) = \mu(\omega)$,
\end{corollary}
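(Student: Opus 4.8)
The plan is to read off the classification from the abstract constrained-variational (orbital stability) theory, feeding in the spectral data $n(\mathcal{L})=1$, $z(\mathcal{L})=0$ of Theorem~\ref{theorem-degeneracy} and the sign of $\mu'(\omega)$ from Theorem~\ref{theorem-persistence}. Since $\Phi$ solves (\ref{eqstaz}), it is a critical point of the action $S_\omega=E-\omega Q$, so $\omega$ is precisely the Lagrange multiplier for minimizing $E$ on the constraint manifold $M_\mu:=\{U\in H^1_{\rm C}(\mathcal{T}):Q(U)=\mu(\omega)\}$. The second variation of $E|_{M_\mu}$ at $\Phi$ is governed, on real perturbations, by the quadratic form $\langle\mathcal{L}\,\cdot,\cdot\rangle$ restricted to the tangent space $T:=\{v\in H^1_{\rm C}(\mathcal{T}):\langle\Phi,v\rangle_{L^2}=0\}$, and, on imaginary (phase) perturbations, by $L_-:=-\Delta-\omega-3\Phi^4$. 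Because $\Phi>0$ solves (\ref{eqstaz}) we have $L_-\Phi=0$ with $L_-\ge 0$ and simple kernel spanned by the gauge direction $\Phi$; modding out by the phase orbit, the imaginary part contributes no negative directions, so the classification is entirely decided by the constrained form of $\mathcal{L}$.

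Next I would compute the slope. By the $C^1$ dependence $\omega\mapsto\Phi(\cdot,\omega)$ (a consequence of $z(\mathcal{L})=0$) I may differentiate (\ref{eqstaz}) in $\omega$, which gives $\mathcal{L}\,\partial_\omega\Phi=\Phi$, and hence $\langle\mathcal{L}^{-1}\Phi,\Phi\rangle=\langle\partial_\omega\Phi,\Phi\rangle=\tfrac12\mu'(\omega)$. I then invoke the standard constrained-index formula: for a self-adjoint operator with $n(\mathcal{L})=1$ and $z(\mathcal{L})=0$, the Morse index of $\mathcal{L}|_T$ equals $n(\mathcal{L})-n(D)=1-n(D)$, where $D:=\langle\mathcal{L}^{-1}\Phi,\Phi\rangle$ is the scalar ``constraint matrix'' and $n(D)$ counts its negative eigenvalues. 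Consequently $n(\mathcal{L}|_T)=0$ exactly when $\mu'(\omega)<0$ and $n(\mathcal{L}|_T)=1$ exactly when $\mu'(\omega)>0$.

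To finish, I combine this with the monotonicity (\ref{mass-property-2}) of Theorem~\ref{theorem-persistence}. For $\omega\in(\omega_1,0)$ one has $\mu'(\omega)<0$, so $\mathcal{L}|_T$ has no negative directions; since $\sigma_{\rm a.c.}(\mathcal{L})=[|\omega|,\infty)$ sits strictly above $0$ and the single negative eigenvalue is removed by the transverse constraint $D\ne 0$, the form $\mathcal{L}|_T$ is bounded below by a positive constant, and because $\langle\mathcal{L}v,v\rangle=\|\nabla v\|_{L^2}^2+|\omega|\,\|v\|_{L^2}^2-15\langle\Phi^4v,v\rangle$ this spectral gap upgrades to $H^1$-coercivity of $\mathcal{L}|_T$. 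A Taylor expansion of $E$ along $M_\mu$ then shows that $\Phi$ is a strict local minimizer of $E|_{M_\mu}$ up to phase. For $\omega\in(-\infty,\omega_1)$ one has $\mu'(\omega)>0$, so $\mathcal{L}|_T$ possesses a negative direction, producing an admissible curve on $M_\mu$ along which $E$ strictly decreases through $\Phi$; as $\mathcal{L}|_T$ also has infinitely many positive directions, $\Phi$ is a saddle point of $E|_{M_\mu}$. This is exactly the assertion of Corollary~\ref{corollary-persistence}.

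The analytical content is light here --- the result is essentially a corollary of Theorems~\ref{theorem-degeneracy} and~\ref{theorem-persistence} --- so the work lies in the bookkeeping. The two points requiring care are: (i) justifying the constrained-index formula in the present infinite-dimensional setting with nontrivial continuous spectrum, which amounts to checking its hypotheses, namely $z(\mathcal{L})=0$ and the transversality $D\neq 0$ (equivalently $\mu'(\omega)\neq 0$, valid away from $\omega_1$); and (ii) upgrading ``no negative directions on $T$'' to a genuine \emph{strict} local minimizer, i.e.\ establishing the positive spectral gap and the resulting $H^1$-coercivity rather than mere nonnegativity. I expect (ii) to be the main technical obstacle. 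Finally, the threshold $\omega=\omega_1$, where $\mu'(\omega_1)=0$ and $\mathcal{L}|_T$ degenerates, is deliberately excluded from both intervals, consistent with the open intervals in the statement.
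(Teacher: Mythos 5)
Your proposal is correct and is essentially the paper's own argument: the paper proves the corollary by simply invoking the standard orbital-stability/constrained-minimization theory (the Vakhitov--Kolokolov slope condition together with the constrained index count $n(\mathcal{L}|_T)=n(\mathcal{L})-n(D)$ with $D=\langle\mathcal{L}^{-1}\Phi,\Phi\rangle=\tfrac12\mu'(\omega)$), fed with $n(\mathcal{L})=1$, $z(\mathcal{L})=0$ from Theorem \ref{theorem-degeneracy} and the sign of $\mu'(\omega)$ from Theorem \ref{theorem-persistence}, exactly as you spell out. Your write-up merely makes explicit the steps the paper delegates to the cited references.
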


\begin{remark}\label{uniquenessGS}
It follows from the dynamical system methods in the proof of Theorem \ref{theorem-degeneracy}
that there exists the unique solution of the stationary NLS equation \eqref{eqstaz} (up to the phase rotation)
with the properties stated in Theorem \ref{theorem-existence}. Therefore, for every $\omega \in [\omega_0,0)$ such that
$\mu(\omega)\in (\mu_{\RE^+}, \mu_\RE]$, the minimizers of the variational problem \eqref{infB} coincides
with the ground state of the variational problem \eqref{ground-state},
which shares the same properties (see \cite{AST17} and \cite{DST20}).
\end{remark}

Fig. \ref{fig-mass} shows the mapping $\omega \mapsto \mu(\omega)$ obtained by using numerical approximations.
This numerical result agrees with the statement of Theorem \ref{theorem-persistence}.

\begin{figure}[h!]
	\centering
	\includegraphics[width=12cm,height=8cm]{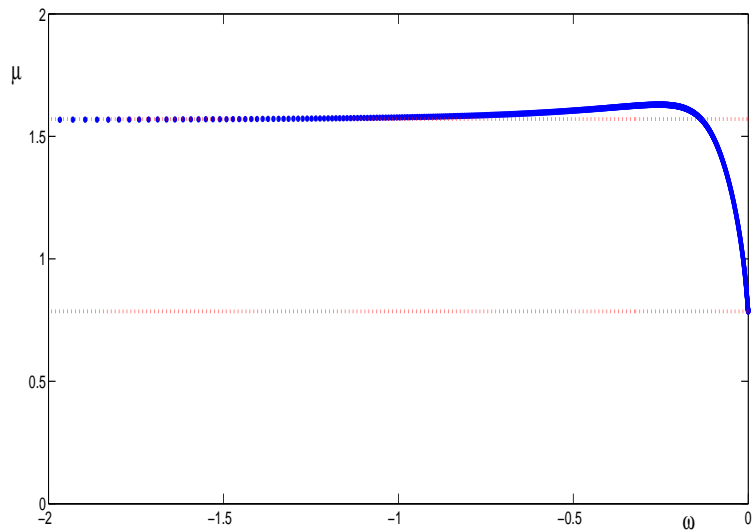}
	\caption{Mass $\mu$ versus frequency $\omega$ for the minimizer of
the constrained minimization problem (\ref{infB}). The horizontal dotted lines show the limiting levels
(\ref{mass-half-soliton}) and (\ref{mass-soliton}) given by the half-soliton
mass $\mu_{\RE^+}$ and the full-soliton mass $\mu_{\RE}$.}
	\label{fig-mass}
\end{figure}

We summarize that the standing wave
$\Phi(\cdot,\omega)$ is the ground state (global minimizer) of the variational problem (\ref{ground-state}) for $\omega \in [\omega_0,0)$,
a local constrained minimizer for $\omega \in (\omega_1,\omega_0)$, and a saddle point of the energy $E(U)$ subject
to the constraint $Q(U) = \mu(\omega)$ for $\omega \in (-\infty,\omega_1)$. We stress
that both the intervals $(-\infty,\omega_1)$ and $(\omega_1,\omega_0)$ correspond to $\mu(\omega) > \mu_{\RE}$.
As a result, although no ground state defined by the variational problem (\ref{ground-state})
exists for $\mu > \mu_{\RE}$ as a consequence of Theorem 3.3 in \cite{AST17}, there exists
a local constrained minimizer of energy for fixed mass $\mu \in (\mu_{\RE},\mu_{\rm max})$,
where $\mu_{\rm max} = \mu(\omega_1)$ is the maximal value of the mapping $\omega \mapsto \mu(\omega)$.

In connection with the variational characterization of the standing waves on metric graphs $\mathcal{T}$ which are not necessarily
the ground states, we mention two recent papers treating situations different than ours.
In \cite{PSV19}, local and not global constrained minimizers of the energy
at constant mass in the critical power $p = 2$ are discussed for some cases of unbounded graphs
with Neumann--Kirchhoff boundary conditions. In \cite{AST19}, local minima of the energy at constant mass
for subcritical power are constructed for general graphs by means of a variational  problem with two constraints.

The present paper is organized as follows. Section \ref{sec-variational} gives the proof of Theorem \ref{theorem-existence}
by using the variational characterization of the standing waves.
Section \ref{sec-dynamical} gives the proof of Theorem \ref{theorem-degeneracy}
by using the dynamical system methods and the analytical theory for differential equations.
Section \ref{sec-mass} gives the proof of Theorem \ref{theorem-persistence}.
Appendix \ref{app-spectrum} gives the precise characterization of the spectrum of the Laplace operator $\Delta$
on the tadpole graph $\mathcal{T}$.
Appendix \ref{app-relation} gives information between the variational problem
(\ref{infB}) and the minimization of the action (\ref{action}) at the Nehari manifold.
Appendix \ref{app-asymptotics} gives computational details
of approximating of the integral for the mass $\mu(\omega)$ in the limit $\omega \to -\infty$.

\section{Variational characterization of the standing waves}
\label{sec-variational}

Here we shall prove Theorem \ref{theorem-existence}.
First, we show that there exists a global minimizer $\Psi \in H^1_{\rm C}(\mathcal{T})$
of the variational problem (\ref{infB}) for every $\omega < 0$.
Then, we deduce properties of the minimizer and use the Lagrange multipliers
to obtain the solution $\Phi \in H^2_{\rm NK}(\mathcal{T})$ to the stationary NLS equation (\ref{eqstaz}).

We begin by recalling that the variational problem \eqref{infB} has 
a solution on both $\RE$ and $\RE^+$ (see for example the already mentioned papers \cite{Agueh06, Agueh08} or references therein). 
The precise value of the infima $\mathcal{B}_{\mathbb{R}}(\omega)$ and  $\mathcal{B}_{\mathbb{R}^+}(\omega)$ are given 
in the subsequent formula \eqref{inferreerreplus}. Let us now consider the tadpole graph. 

It follows from (\ref{Bfunctional}) that 
$B_{\omega}(U)$ with $\omega < 0$ is equivalent to $\| U \|_{H^1(\mathcal{T})}^2$ in the sense
that there exist positive constants $C_{\pm}(\omega)$ such that for every $U \in H^1_C(\mathcal{T})$, it is true that
\begin{equation}
\label{B-equivalence}
C_-(\omega) \| U \|^2_{H^1(\mathcal{T})} \leq B_{\omega}(U) \leq C_+(\omega) \| U \|^2_{H^1(\mathcal{T})}.
\end{equation}
Hence, it follows that $B_{\omega}(U) \geq 0$ so that the infimum $\mathcal{B}(\omega) > 0$ of the
variational problem (\ref{infB}) exists. Positivity of $\mathcal{B}(\omega)$
follows from the nonzero constraint $\| U \|_{L^6(\mathcal{T})} = 1$ and Sobolev's embedding
of $H^1(\mathcal{T})$ to $L^6(\mathcal{T})$:
there exists a $U$-independent constant $C > 0$ such that
\begin{equation}
\label{Sobolev-embedding}
\| U \|_{L^6(\mathcal{T})} \leq C \| U \|_{H^1(\mathcal{T})}
\end{equation}
for all $U \in H^1_{\rm C}(\mathcal{T})$.

Let $\{ U_n \}_{n \in \mathbb{N}}$ be a minimizing sequence in $H^1(\mathcal{T})$ such that
$\| U_n \|_{L^6(\mathcal{T})} = 1$ for every $n \in \mathbb{N}$ and
$B_{\omega}(U_n) \to \mathcal{B}(\omega)$ as $n \to \infty$.
Therefore, there exists a weak limit of the sequence in $H^1(\mathcal{T})$
denoted by $U_*$ so that
\begin{equation}
\label{weak-convergence}
U_n \rightharpoonup U_* \quad \mbox{in} \ H^1(\mathcal{T}),  \quad  \mbox{as} \quad n\rightarrow \infty.
\end{equation}
By Fatou's Lemma, we have
$$
0 \leq \gamma := \| U_* \|^6_{L^6(\mathcal{T})} \leq \lim_{n \to \infty} \| U_n \|^6_{L^6(\mathcal{T})} = 1,
$$
so that $\gamma \in [0,1]$. The following two lemmas eliminate the cases $\gamma \in (0,1)$ and $\gamma = 0$.

\begin{lemma}
\label{lemma-1-0}
For every $\omega < 0$, either $\gamma = 0$ or $\gamma = 1$. If $\gamma = 1$, then
$U_* \in H^1(\mathcal{T})$ is a global minimizer of (\ref{infB}).
\end{lemma}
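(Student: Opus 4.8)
The plan is to run a Brezis--Lieb / concentration-compactness argument that exploits the $2$-homogeneity of $B_\omega$ together with the strict concavity of $t \mapsto t^{1/3}$. First I would pass to a subsequence along which $U_n \to U_*$ almost everywhere on $\mathcal{T}$: on the circle $[-\pi,\pi]$ the embedding $H^1 \hookrightarrow C^0$ is compact, so $u_n \to u_*$ uniformly, while on the half-line $H^1(0,\infty)$ embeds compactly into $C_{\rm loc}$, so $v_n \to v_*$ locally uniformly, and together these give a.e.\ convergence. Since $\{U_n\}$ is bounded in $L^6(\mathcal{T})$, the Brezis--Lieb lemma with exponent $6$ yields
\begin{equation*}
\|U_n\|^6_{L^6(\mathcal{T})} - \|U_n - U_*\|^6_{L^6(\mathcal{T})} \to \|U_*\|^6_{L^6(\mathcal{T})} = \gamma,
\end{equation*}
so that $\|U_n - U_*\|^6_{L^6(\mathcal{T})} \to 1-\gamma$. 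In parallel, since $B_\omega$ is the square of a Hilbert norm equivalent to $\|\cdot\|_{H^1(\mathcal{T})}$ by \eqref{B-equivalence}, the weak convergence $U_n - U_* \rightharpoonup 0$ makes the associated cross term vanish and produces the quadratic splitting $B_\omega(U_n) = B_\omega(U_*) + B_\omega(U_n - U_*) + o(1)$.

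The key ingredient is the scaling inequality coming from $2$-homogeneity: for any nonzero $V \in H^1_{\rm C}(\mathcal{T})$, the normalized function $V / \|V\|_{L^6(\mathcal{T})}$ is admissible in \eqref{infB}, whence $B_\omega(V) \ge \mathcal{B}(\omega)\, \|V\|^2_{L^6(\mathcal{T})}$. Applying this to $V = U_*$ and to $V = U_n - U_*$, passing to the limit in the quadratic splitting, and using $\|U_*\|^2_{L^6(\mathcal{T})} = \gamma^{1/3}$ and $\|U_n - U_*\|^2_{L^6(\mathcal{T})} \to (1-\gamma)^{1/3}$, I obtain
\begin{equation*}
\mathcal{B}(\omega) = B_\omega(U_*) + \lim_{n\to\infty} B_\omega(U_n - U_*) \ge \mathcal{B}(\omega)\bigl[\gamma^{1/3} + (1-\gamma)^{1/3}\bigr].
\end{equation*}
Since $\mathcal{B}(\omega) > 0$, this forces $\gamma^{1/3} + (1-\gamma)^{1/3} \le 1$; but $t \mapsto t^{1/3}$ is strictly concave and vanishes at $t=0$, so $\gamma^{1/3} + (1-\gamma)^{1/3} > 1$ for every $\gamma \in (0,1)$, a contradiction. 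Hence $\gamma \in \{0,1\}$, ruling out the splitting (dichotomy) case.

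It remains to treat $\gamma = 1$. Then $\|U_*\|_{L^6(\mathcal{T})} = 1$, so $U_*$ is admissible in \eqref{infB} and $B_\omega(U_*) \ge \mathcal{B}(\omega)$; on the other hand, weak lower semicontinuity of the $H^1$-equivalent quadratic form $B_\omega$ gives $B_\omega(U_*) \le \liminf_{n\to\infty} B_\omega(U_n) = \mathcal{B}(\omega)$, so $B_\omega(U_*) = \mathcal{B}(\omega)$ and $U_*$ is a global minimizer. I expect the main subtlety to be the unboundedness of the half-line edge, which prevents a direct appeal to compact Sobolev embedding on all of $\mathcal{T}$ and is precisely why the Brezis--Lieb splitting plus strict subadditivity is needed; note that the argument above only excludes $\gamma \in (0,1)$, leaving the vanishing case $\gamma = 0$ (escape of mass to infinity along the half-line) to be eliminated separately in the next lemma.
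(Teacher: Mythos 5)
Your argument is correct and follows essentially the same route as the paper: the quadratic splitting of $B_\omega$ under weak convergence, the Brezis--Lieb decomposition of the $L^6$ norm, the scaling inequality $B_\omega(V) \ge \mathcal{B}(\omega)\|V\|^2_{L^6(\mathcal{T})}$ (which is what the paper means by ``normalizing the arguments in $L^6$''), and the strict concavity of $x \mapsto x^{1/3} + (1-x)^{1/3}$ to exclude $\gamma \in (0,1)$. The treatment of the case $\gamma = 1$ also matches, the only cosmetic difference being that the paper deduces $B_\omega(U_*) \le \mathcal{B}(\omega)$ directly from the splitting identity and nonnegativity of $B_\omega$ rather than invoking weak lower semicontinuity.
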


\begin{proof}
It follows from (\ref{B-equivalence}) and (\ref{weak-convergence}) that
$$
B_{\omega}(U_n) = B_{\omega}(U_*-U_n) + B_{\omega}(U_*) + {\rm o}(1),
$$
where ${\rm o}(1) \to 0$ as $n \to \infty$. As a result, we obtain
\begin{equation}\label{quadratic}
\mathcal B(\omega)=\lim _{n\to \infty}B_{\omega}(U_n) = B_{\omega}(U_*) + \lim_{n\to \infty} B_{\omega}(U_n-U_*).
\end{equation}
It follows from the Brezis-Lieb Lemma (see \cite{BL}) that
\begin{equation}\label{brezis-lieb}
1= \lim_{n \to \infty} \| U_n \|^6_{L^6(\mathcal{T})}=\| U_* \|^6_{L^6(\mathcal{T})}+\lim_{n\to \infty}\| U_n-U_* \|^6_{L^6(\mathcal{T})}\ .
\end{equation}
It follows from \eqref{quadratic} after normalizing in $L^6(\mathcal{T})$ the arguments of
$U_*$ and $U_n-U_*$ and taking into account \eqref{brezis-lieb} that
\begin{equation}
\mathcal B(\omega) \geq \mathcal B(\omega) \gamma^{\frac{1}{3}} +\mathcal B(\omega) (1-\gamma)^{\frac{1}{3}},
\end{equation}
where $\gamma := \| U_* \|^6_{L^6(\mathcal{T})}$ and we used the fact that $\mathcal{B}(\omega)$
is the infimum of $B_{\omega}(U)$ with the constraint $\| U \|_{L^6(\mathcal{T})}^6 = 1$.
Hence, $\gamma \in [0,1]$ satisfies the bound
\begin{equation*}
\gamma^{\frac{1}{3}} + (1-\gamma)^{\frac{1}{3}} \leq 1,
\end{equation*}
where the map $x \mapsto f(x) := x^{\frac{1}{3}} + (1-x)^{\frac{1}{3}}$ is such that $f(0)=f(1)=1$ and strictly concave.
Hence either $\gamma = 0$ and $\gamma = 1$.

If $\gamma = 1$, then $B_{\omega}(U_*) \geq \mathcal B(\omega)$. However,
it follows from (\ref{quadratic}) that $\mathcal{B}(\omega) \geq B_{\omega}(U_*)$,
hence $B_{\omega}(U_*) = \mathcal{B}(\omega)$
so that $U_*$ is a minimizer of the constrained problem (\ref{infB}).
\end{proof}

\begin{lemma}
\label{lemma-1-2}
For every $\omega < 0$, it follows that $\gamma = 1$.
\end{lemma}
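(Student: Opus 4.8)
The plan is to exclude the escape alternative $\gamma=0$ left open by Lemma \ref{lemma-1-0}, so that only $\gamma=1$ remains. The strategy is a strict--inequality comparison with the full line: I will first show that $\gamma=0$ forces $\mathcal{B}(\omega)\geq \mathcal{B}_{\mathbb{R}}(\omega)$, where $\mathcal{B}_{\mathbb{R}}(\omega)$ is the infimum of the same normalized ratio on $\mathbb{R}$ (the value recorded in \eqref{inferreerreplus}), and then exhibit an explicit competitor $U\in H^1_{\rm C}(\mathcal{T})$ whose ratio is strictly below $\mathcal{B}_{\mathbb{R}}(\omega)$. These two conclusions are incompatible, so $\gamma=0$ is impossible and Lemma \ref{lemma-1-0} gives $\gamma=1$.

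First I would treat the case $\gamma=0$. Then $U_*=0$ and $U_n\rightharpoonup 0$ in $H^1(\mathcal{T})$, so the compact embedding $H^1\hookrightarrow L^6$ on the circle and on every bounded portion of the half-line gives $U_n\to 0$ in $L^6_{\rm loc}(\mathcal{T})$. Since $\|U_n\|_{L^6(\mathcal{T})}=1$, the entire $L^6$--mass escapes to infinity along the unbounded edge. A localization argument (cut off $U_n$ away from a fixed neighborhood of the vertex, the gradient error being controlled because $U_n\to 0$ in $L^2_{\rm loc}$), followed by translation, produces $H^1(\mathbb{R})$ profiles carrying the unit mass. Weak lower semicontinuity of $B_\omega$ together with the line bound $B_\omega^{\mathbb{R}}(w)\geq \mathcal{B}_{\mathbb{R}}(\omega)\,\|w\|_{L^6(\mathbb{R})}^2$ then yields $\mathcal{B}(\omega)=\lim_n B_\omega(U_n)\geq \mathcal{B}_{\mathbb{R}}(\omega)$.

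Next I would build the competitor directly from the full-line soliton $\varphi_\omega$. Define $U=(u,v)$ on $\mathcal{T}$ by $u(x):=\varphi_\omega(x)$ on the circle $[-\pi,\pi]$ and $v(y):=\varphi_\omega(\pi+y)$ on the half-line. Since $u(\pm\pi)=\varphi_\omega(\pi)=v(0)$, this $U$ is continuous at the vertex, hence $U\in H^1_{\rm C}(\mathcal{T})$ (the Kirchhoff derivative condition is not needed for test functions). A change of variables and the evenness of $\varphi_\omega$ give the clean identities $B_\omega(U)=B_\omega^{\mathbb{R}}(\varphi_\omega)-T$ and $\|U\|_{L^6(\mathcal{T})}^6=\|\varphi_\omega\|_{L^6(\mathbb{R})}^6-L$, where $T:=\int_\pi^\infty(|\varphi_\omega'|^2+|\omega|\varphi_\omega^2)\,dx$ and $L:=\int_\pi^\infty \varphi_\omega^6\,dx$: in passing to the tadpole one discards exactly the left exponential tail of the soliton. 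Writing $M:=\|\varphi_\omega\|_{L^6(\mathbb{R})}^6$, the Pohozaev--Nehari identity gives $B_\omega^{\mathbb{R}}(\varphi_\omega)=3M$, while multiplying the profile equation $-\varphi_\omega''+|\omega|\varphi_\omega=3\varphi_\omega^5$ by $\varphi_\omega$ and integrating over $[\pi,\infty)$ yields the tail identity $T=3L+|\varphi_\omega'(\pi)|\varphi_\omega(\pi)$, so that $T>3L$; evenness also gives $T<\tfrac32 M$. The goal $B_\omega(U)/\|U\|_{L^6(\mathcal{T})}^2<\mathcal{B}_{\mathbb{R}}(\omega)$ is equivalent, after cubing, to $(3M-T)^3<27M^2(M-L)$, which simplifies to $27M^2(T-L)>T^2(9M-T)$; using $T-L>\tfrac23 T$ (from $T>3L$) and $T(9M-T)\leq \tfrac{45}{4}M^2$ (from $0<T<\tfrac32 M$) closes the estimate for every $\omega<0$.

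The main obstacle is the uniformity of this strict inequality as $\omega\to-\infty$. In that regime the discarded tail is exponentially small, the competitor's ratio tends to $\mathcal{B}_{\mathbb{R}}(\omega)$, and any soft estimate fails; the argument must rest on the sharp tail identity $T=3L+|\varphi_\omega'(\pi)|\varphi_\omega(\pi)>3L$ rather than on a crude bound. A secondary delicate point is the escape step itself, namely certifying that the mass lost to infinity is controlled by the line infimum $\mathcal{B}_{\mathbb{R}}(\omega)$ (and not by the smaller half-line value $\mathcal{B}_{\mathbb{R}^+}(\omega)$), which requires the cutoff-and-translate argument to be carried out with care.
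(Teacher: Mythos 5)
Your proof is correct and follows the same two-step architecture as the paper: first, $\gamma=0$ forces the minimizing sequence to escape along the half-line and hence $\mathcal{B}(\omega)\geq\mathcal{B}_{\mathbb{R}}(\omega)$; second, an explicit competitor beats $\mathcal{B}_{\mathbb{R}}(\omega)$ strictly. Your competitor is in fact the same one the paper uses (the soliton truncated to $[-\pi,\infty)$ and wrapped onto the tadpole), so the contribution is not a new construction but a different verification. Where the paper evaluates all the integrals explicitly in terms of $\arctan\sinh$ and $\sinh/\cosh^2$, reduces the claim to $f(A)<2^{2/3}$, and proves this by a monotonicity argument for an auxiliary function $g$, you avoid every explicit integral: the Nehari identity $B_{\omega}^{\mathbb{R}}(\varphi_\omega)=3M$ on the line and the tail identity $T=3L+|\varphi_\omega'(\pi)|\varphi_\omega(\pi)>3L$ (obtained by multiplying the profile equation by $\varphi_\omega$ and integrating over $[\pi,\infty)$, with the boundary term having a favorable sign) reduce the strict inequality to the algebraic statement $27M^2(T-L)>T^2(9M-T)$, which follows from $T>3L$ and $0<T<\tfrac32 M$ since $18M^2T>\tfrac{45}{4}M^2\,T\geq T^2(9M-T)$. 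I checked this chain and it closes; it is cleaner and more robust than the paper's computation, as it uses only the ODE, evenness, and monotone decay of the soliton rather than its explicit $\operatorname{sech}$ profile, and, contrary to your own worry about the regime $\omega\to-\infty$, the inequality actually becomes easier there since $T/M\to 0$. In the escape step you use a cutoff-and-translate argument with the error controlled by $L^2_{\rm loc}$ and $L^6_{\rm loc}$ vanishing, whereas the paper modifies the sequence near the vertex and invokes the P\'olya--Szeg\H{o} rearrangement inequality to compare with the line; both are standard and valid, and your observation that the cut-off profile vanishes at the end of the half-line (so that the relevant benchmark is $\mathcal{B}_{\mathbb{R}}$ and not the smaller $\mathcal{B}_{\mathbb{R}^+}$) is exactly the point that needs care.
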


\begin{proof}
By Lemma \ref{lemma-1-0}, either $\gamma = 0$ or $\gamma = 1$, so we only need to exclude the case $\gamma = 0$.
Let us define the variational problem analogous to \eqref{infB} but posed on the line:
\beq\label{ineqsol2}
\mathcal{B}_{\mathbb{R}}(\omega) := \inf_{w\in H^1(\mathbb{R})} \left\{ B_{\omega}(w;\mathbb{R}) : \quad
\| w \|_{L^{6}(\mathbb{R})} = 1 \right\},
\eeq
where
$$
B_{\omega}(w;\mathbb{R}) :=  \| w' \|^2_{L^2(\mathbb{R})} - \omega \| w \|^2_{L^2(\mathbb{R})}.
$$
As is well known, the infimum $\mathcal{B}_{\mathbb{R}}(\omega)$ is attained at the scaled soliton $\lambda_{\mathbb{R}} \varphi_{\omega}$
satisfying the constraint $\| \lambda_{\mathbb{R}} \varphi_{\omega} \|_{L^6(\mathbb{R})}^6 = 1$, from which it follows that
$$
\lambda_{\mathbb{R}} = \left( \frac{4}{\pi |\omega|} \right)^{1/6}.
$$
Evaluating the integrals in \eqref{ineqsol2} yields the exact expression:
\beq\label{solB}
\mathcal{B}_{\mathbb{R}}(\omega) = \| \lambda_{\mathbb{R}} \varphi_{\omega}' \|_{L^2(\mathbb{R})}^2
- \omega \| \lambda_{\mathbb{R}} \varphi_{\omega} \|_{L^2(\mathbb{R})}^2
= \frac{3 \pi |\omega|}{4} \lambda_{\mathbb{R}}^2 = \frac{3}{4} \left( 2 \pi |\omega| \right)^{2/3}.
\eeq
We first show that if $\gamma = 0$, then the minimizing sequence $\{ U_n \}_{n \in \mathbb{N}}$ escapes
to infinity along the half-line in $\mathcal{T}$ as $n \to \infty$ so that $U_* = 0$ and
$\mathcal{B}(\omega) \geq \mathcal{B}_{\mathbb{R}}(\omega)$. Then, we show that for every $\omega < 0$
there exists a trial function $U_0 \in H^1_{\rm C}(\mathcal{T})$ such that $\| U_0 \|_{L^6(\mathcal{T})} = 1$ and
$B_{\omega}(U_0) < \mathcal{B}_{\mathbb{R}}(\omega)$. Therefore, the minimizing sequence cannot escape to infinity
so that $\gamma \neq 0$. Hence $\gamma = 1$ by Lemma \ref{lemma-1-0}.

To proceed with the first step, let $\{U_n\}_{n\in\NA}$ be a minimizing sequence such that $U_n \in H^1_{\rm C}(\mathcal{T})$,
$\|U_n \|_{L^6(\mathcal{T})} = 1$, $\lim\limits_{n\to\infty} B_{\omega}(U_n) = \mathcal{B}(\omega)$ and 
suppose that $U_n \to 0$ weakly in $H^1(\mathcal T)$.

For simplicity, we consider the nonnegative sequence with $U_n\geq 0$. Let $\epsilon_n \geq 0$ be the maximum of $U_n$ on
$[-\pi,\pi] \cup [0,2\pi] \subset \mathcal{T}$. Since $U_n \to 0$ as $n \to \infty$ uniformly
on any compact subset of $\mathcal{T}$, we have $\epsilon_n \to 0$ as $n \to \infty$. Let us define
$\tilde{U}_n = (\tilde{u}_n,\tilde{v}_n)\in H^1_{\rm C}(\mathcal{T})$ from the components of $U_n = (u_n,v_n)$ as follows:
$$
\tilde{v}_n(x) = \left\{ \begin{array}{ll} v_n(x), \quad & x \in [2 \pi,\infty), \\
v_n(2\pi) \frac{x - \pi}{\pi}, \quad & x \in [\pi,2\pi], \\
2^{1/6} \epsilon_n \frac{\pi - x}{\pi}, \quad & x \in [0,\pi], \end{array} \right.
$$
and
$$
\tilde{u}_n(x) = 2^{1/6} \epsilon_n, \quad x \in [-\pi,\pi].
$$
Since $\| U_n - \tilde{U}_n \|_{H^1(\mathcal{T})} \to 0$ as $n \to \infty$, we have
$\lim\limits_{n\to\infty} B_{\omega}(\tilde{U}_n) = \mathcal{B}(\omega)$. In addition,
$\| \tilde{U}_n \|_{L^6(\mathcal{T})} \geq 1$ because
$$
\| \tilde{U}_n \|^6_{L^6([-\pi,\pi] \cup [0,2\pi])} \geq 4 \pi \epsilon^6_n \geq \| U_n \|^6_{L^6([-\pi,\pi] \cup [0,2\pi])}.
$$
By the proof of Proposition \ref{prop-Appendix-B} in Appendix B, minimizing $B_{\omega}(U)$ under the constraint
$\| U \|_{L^6(\mathcal{T})} = 1$ is the same as minimizing $B_{\omega}(U)$ in $\| U \|_{L^6(\mathcal{T})} \geq 1$,
therefore, $\{ \tilde{U}_n \}_{n\in\NA}$ is also a minimizing sequence for the same variational problem (\ref{infB}).
At the same time, the image of $\tilde{U}_n$ covers all values in $(0,\max_{x \in \mathcal{T}} U_n(x))$ at least twice.
If ${\tilde U_n}^{s}$ is the symmetric rearrangement of ${\tilde U_n}$ on the line $\mathbb{R}$, then
it follows from the Polya--Szeg\"{o} inequality on graphs (see Proposition 3.1 in \cite{AST15}), that
$$
B_{\omega}(\tilde{U}_n) \geq B_{\omega}(\tilde{U}^{s}_n;\mathbb{R}) \geq \mathcal{B}_{\mathbb{R}}(\omega), \quad n \in \NA.
$$
By taking the limit $n \to \infty$, we obtain $\mathcal{B}(\omega) \geq \mathcal{B}_{\mathbb{R}}(\omega)$
in the case of $\gamma = 0$.

To proceed with the second step, we construct a trial function $U_0 \in H^1_{\rm C}(\mathcal{T})$ such that $\| U_0 \|_{L^6(\mathcal{T})} = 1$ and
$B_{\omega}(U_0) < \mathcal{B}_{\mathbb{R}}(\omega)$ with the following explicit computation. For every $\omega < 0$,
we define
$$
U_0 = \left\{ \begin{array}{ll} \lambda_0 \varphi_{\omega}(x), & \quad x \in [-\pi,\pi], \\
\lambda_0 \varphi_{\omega}(x+\pi), & \quad x \in (0,\infty), \end{array} \right.
$$
hence $U_0$ on $\mathcal{T}$ is a scaled soliton $\lambda_0 \varphi_{\omega}$ truncated on $[-\pi,\infty)$.
Then, $\lambda_0$ is found from the normalization condition:
\begin{eqnarray*}
1 & = & \frac{1}{2} \lambda_0^6 |\omega| \int_{-2 \pi |\omega|^{1/2}}^{\infty} {\rm sech}^3z \;dz \\
& = & \frac{1}{2} \lambda_0^6 |\omega| \left[ \frac{\pi}{4} + \frac{\sinh(2 \pi |\omega|^{1/2})}{2 \cosh^2(2 \pi |\omega|^{1/2})}
+ \frac{1}{2} \arctan \sinh(2 \pi |\omega|^{1/2}) \right],
\end{eqnarray*}
while we compute that
\begin{eqnarray*}
B_{\omega}(U_0) & = & \frac{1}{2} \lambda_0^2 |\omega| \int_{-2 \pi |\omega|^{1/2}}^{\infty} \left[ 2 {\rm sech}z - {\rm sech}^3z \right] dz \\
& = & \frac{1}{2} \lambda_0^2 |\omega| \left[ \frac{3 \pi}{4} - \frac{\sinh(2 \pi |\omega|^{1/2})}{2 \cosh^2(2 \pi |\omega|^{1/2})}
+ \frac{3}{2} \arctan \sinh(2 \pi |\omega|^{1/2}) \right] \\
& = & \frac{3}{4} (\pi |\omega|)^{2/3} f(2 \pi |\omega|^{1/2}),
\end{eqnarray*}
where
\begin{eqnarray*}
f(A) := \frac{1 + \frac{2}{\pi} \arctan \sinh(A) - \frac{2 \sinh(A)}{3 \pi \cosh^2(A)}}{
\left[ 1 + \frac{2}{\pi} \arctan \sinh(A) + \frac{2 \sinh(A)}{\pi \cosh^2(A)} \right]^{1/3}}, \quad A := 2 \pi |\omega|^{1/2}.
\end{eqnarray*}
It is clear that $f(0) = 1$ and $\lim_{A \to \infty} f(A) = 2^{2/3}$. We shall prove that $f(A) < 2^{2/3}$ for every $A > 0$.
Indeed, for every $A > 0$,
$$
f(A) \leq \left[ 1 + \frac{2}{\pi} \arctan \sinh(A) + \frac{2 \sinh(A)}{\pi \cosh^2(A)} \right]^{2/3} =: \left[ g(\sinh(A)) \right]^{2/3},
$$
where
$$
g(z) := 1 + \frac{2}{\pi} \arctan z + \frac{2 z}{\pi (1+z^2)}, \quad z := \sinh(A).
$$
Since
$$
g'(z) = \frac{4}{\pi (1+z^2)^2} > 0,
$$
$g$ is monotonically increasing on $\mathbb{R}^+$ so that
$$
f(A) \leq [g(\sinh(A))]^{2/3} < \left[ \lim_{A \to \infty} g(\sinh(A)) \right]^{2/3} = 2^{2/3}.
$$
Thus, $B_{\omega}(U_0) < \mathcal{B}_{\mathbb{R}}(\omega)$ for every $\omega < 0$.

Both steps are complete and $\gamma = 0$ is impossible for the minimizing sequence $\{ U_n \}_{n \in \mathbb{N}}$.
\end{proof}

\begin{remark}
Any smooth and compactly supported function in $H^1(\RE)$ can be considered as an element of
$H^1_{\rm C}(\mathcal{T})$ (see the proof of Theorem 2.2 of \cite{AST15} and Remark 2.2 in \cite{AST17}),
so that by a density argument we have (independently on $\gamma$)
 \beq\label{ineqsol1}
\mathcal{B}(\omega) \leq \inf_{w\in H^1(\mathbb{R})} \left\{ B_{\omega}(w;\mathbb{R}) : \quad
\| w \|_{L^{6}(\mathbb{R})} = 1 \right\}
= \mathcal{B}_{\mathbb{R}}(\omega).
\eeq
Hence, for the escaping minimizing sequence with $\gamma = 0$ we would actually have that $\mathcal{B}(\omega) = \mathcal{B}_{\mathbb{R}}(\omega)$.
However, the existence of the trial function $U_0 \in H^1_{\rm C}(\mathcal{T})$ such that $\| U_0 \|_{L^6(\mathcal{T})} = 1$ and
$B_{\omega}(U_0) < \mathcal{B}_{\mathbb{R}}(\omega)$ eliminates the case $\gamma = 0$.
\end{remark}

It follows from Lemmas \ref{lemma-1-0} and \ref{lemma-1-2} that there exists a global minimizer $\Psi \in H^1_{\rm C}(\mathcal{T})$
of the variational problem (\ref{infB}) for every $\omega < 0$.
We now verify properties of the global minimizer $\Psi \in H^1_{\rm C}(\mathcal{T})$.

\begin{lemma}
\label{lemma-1-3}
Let $\Psi \in H^1_{\rm C}(\mathcal{T})$ be the global minimizer of the variational problem (\ref{infB}) for every $\omega < 0$.
Then, $\Psi$ is real up to the phase rotation, positive up to the sign choice, symmetric on $[-\pi,\pi]$, and monotonically decreasing
on $[0,\pi]$ and $[0,\infty)$.
\end{lemma}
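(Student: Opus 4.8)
The plan is to establish the four properties in the order stated, first reducing to real nonnegative functions and then imposing the geometric structure by rearrangement, with strictness postponed to the Euler--Lagrange equation obtained in the next step of the proof.

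First I would show that the minimizer may be taken real and nonnegative. For any $U=(u,v)\in H^1_{\rm C}(\mathcal{T})$ the modulus $|U|:=(|u|,|v|)$ again lies in $H^1_{\rm C}(\mathcal{T})$: on each edge $|u|,|v|\in H^1$, and continuity at the vertex is preserved because $|u(\pi)|=|u(-\pi)|=|v(0)|$. The pointwise diamagnetic inequality $|\nabla |U||\le |\nabla U|$ a.e.\ gives $\|\nabla |U|\|_{L^2(\mathcal{T})}\le \|\nabla U\|_{L^2(\mathcal{T})}$, while $\||U|\|_{L^2(\mathcal{T})}=\|U\|_{L^2(\mathcal{T})}$ and $\||U|\|_{L^6(\mathcal{T})}=\|U\|_{L^6(\mathcal{T})}$. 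Hence $B_{\omega}(|U|)\le B_{\omega}(U)$ with the constraint preserved, so $|\Psi|$ is again a minimizer and I replace $\Psi$ by $|\Psi|\ge 0$. Analyzing the equality case, in which the phase of $U$ must be locally constant wherever $|U|>0$, shows that $\Psi$ is, up to a global phase, real-valued, which yields the first two assertions modulo strict positivity.

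Next I would fix the geometry by the decreasing rearrangement adapted to the tadpole. Given $\Psi\ge 0$, define $\Psi^{\#}$ by filling each super-level set $\{\Psi^{\#}>t\}$ first as a symmetric interval about the antipodal point $x=0$ of the circle and, once the whole circle $[-\pi,\pi]$ of length $2\pi$ is exhausted, as an interval $[0,\,\cdot\,]$ on the half-line emanating from the vertex. By construction $\Psi^{\#}$ is symmetric on $[-\pi,\pi]$, non-increasing on $[0,\pi]$ and on $[0,\infty)$, and continuous at the vertex, its value there being the level $t^{\ast}$ at which the measure equals $2\pi$; hence $\Psi^{\#}\in H^1_{\rm C}(\mathcal{T})$. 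This rearrangement preserves $\|\cdot\|_{L^2(\mathcal{T})}$ and $\|\cdot\|_{L^6(\mathcal{T})}$ and, by the Polya--Szeg\"{o} inequality on graphs (Proposition 3.1 in \cite{AST15}, and the monotone rearrangement of \cite{AST16}), does not increase $\|\nabla\cdot\|_{L^2(\mathcal{T})}$. Therefore $B_{\omega}(\Psi^{\#})\le B_{\omega}(\Psi)$ and $\Psi^{\#}$ is again a minimizer exhibiting all the stated symmetry and monotonicity.

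To transfer these properties to the minimizer itself and to upgrade them to strict positivity and strict monotonicity, I would use that any such minimizer satisfies the second-order ODE on each edge produced in the subsequent Lagrange-multiplier step of the proof of Theorem \ref{theorem-existence}. Strict positivity then follows from the maximum principle and ODE uniqueness, since a nonnegative solution vanishing with its derivative at an interior point would be identically zero, contradicting the constraint $\|\Psi\|_{L^6(\mathcal{T})}=1$; strict monotonicity on $(0,\pi)$ and on $(0,\infty)$ follows because a critical point of a non-increasing solution away from the antipodal point and the vertex is incompatible with the equation. The main obstacle is precisely this rearrangement step: one must verify that the graph Polya--Szeg\"{o} inequality applies to the tadpole rearrangement that places the maximum at the antipodal point while respecting the continuity coupling at the vertex, and then either control the equality case to force $\Psi=\Psi^{\#}$ or appeal to the uniqueness up to symmetries established later via the phase-plane analysis, so that the minimizer, and not merely some minimizer, carries the asserted structure.
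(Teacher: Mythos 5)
Your proposal is correct and follows essentially the same route as the paper: the paper's proof likewise passes to $|\Psi|$ to get a real, nonnegative minimizer and then obtains symmetry on $[-\pi,\pi]$ and monotonicity on $[0,\pi]$ and $[0,\infty)$ by the tadpole-adapted symmetric/monotone rearrangement together with the Polya--Szeg\H{o} inequality on graphs (Proposition 3.1 in \cite{AST15} and the examples after Corollary 3.4 in \cite{AST16}). Your extra care about the equality case and the upgrade to strict positivity/monotonicity via the Euler--Lagrange ODE only fills in details the paper leaves to the cited references.
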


\begin{proof}
If $\Psi$ is a minimizer of the variational problem (\ref{infB}), so is $|\Psi|$. Hence we may assume that $\Psi$ is real and positive.
To prove symmetry and monotonic decay, we observe that if the minimizer is not symmetric on $[-\pi,\pi]$
and is not decreasing on $[0,\pi]$ and $[0,\infty)$, then it is possible to define a suitable competitor on the tadpole
graph $\mathcal{T}$ with lower value of $\mathcal{B}(\omega)$, by using the well known technique of the symmetric rearrangements
and the Polya--Szeg\"{o} inequality on graphs (see Proposition 3.1 in \cite{AST15}, examples discussed after
Corollary 3.4 in \cite{AST16} and in \cite{D19}).
\end{proof}

The following lemma gives as further information with a more precise quantitative control of the infimum $\mathcal{B}(\omega)$.
This result is similar to the energy bounds in (\ref{bounds-on-E}) obtained for the subcritical nonlinearity.

\begin{lemma}
\label{lemma-1-4}
For every $\omega < 0$, the infimum $\mathcal{B}(\omega)$ in (\ref{infB}) satisfies the bounds
\begin{equation}
\label{bounds-on-B}
\mathcal{B}_{\mathbb{R}^+}(\omega) < \mathcal{B}(\omega) < \mathcal{B}_{\mathbb{R}}(\omega),
\end{equation}
where
\beq\label{inferreerreplus}
\mathcal{B}_{\mathbb{R}^+}(\omega) = \frac{3}{4} \left( \pi |\omega| \right)^{2/3}, \quad
\mathcal{B}_{\mathbb{R}}(\omega) = \frac{3}{4} \left( 2 \pi |\omega| \right)^{2/3}.
\eeq
\end{lemma}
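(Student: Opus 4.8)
The plan is to establish the two inequalities in (\ref{bounds-on-B}) separately. The upper bound $\mathcal{B}(\omega) < \mathcal{B}_{\mathbb{R}}(\omega)$ is essentially already in hand: the trial function $U_0 \in H^1_{\rm C}(\mathcal{T})$ built in the proof of Lemma \ref{lemma-1-2} satisfies $\| U_0 \|_{L^6(\mathcal{T})} = 1$ together with $B_{\omega}(U_0) < \mathcal{B}_{\mathbb{R}}(\omega)$, so that $\mathcal{B}(\omega) \leq B_{\omega}(U_0) < \mathcal{B}_{\mathbb{R}}(\omega)$ directly from the definition (\ref{infB}) of the infimum. The value $\mathcal{B}_{\mathbb{R}^+}(\omega) = \tfrac{3}{4}(\pi |\omega|)^{2/3}$ in (\ref{inferreerreplus}) would be recorded exactly as in (\ref{solB}), by minimizing $B_{\omega}(\cdot\,;\mathbb{R}^+)$ over $H^1(\mathbb{R}^+)$: the infimum is attained at the half-soliton $\lambda_{\mathbb{R}^+} \varphi_{\omega}|_{\mathbb{R}^+}$ with $\lambda_{\mathbb{R}^+} = 2^{1/6} \lambda_{\mathbb{R}}$, the normalization now producing a single power of two relative to the line.

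It remains to prove the lower bound $\mathcal{B}_{\mathbb{R}^+}(\omega) < \mathcal{B}(\omega)$, which I would obtain by rearranging the minimizer onto the half-line. Let $\Psi = (u,v)$ be the global minimizer produced by Lemmas \ref{lemma-1-0} and \ref{lemma-1-2}; by Lemma \ref{lemma-1-3} it is nonnegative, even on $[-\pi,\pi]$ and strictly decreasing on $[0,\pi]$ and on $[0,\infty)$, with $u(\pi) = v(0)$ and $u(0) = \max_{\mathcal{T}} \Psi > v(0)$. The decisive structural fact is the multiplicity of the level sets of $\Psi$: writing $m := v(0)$ for the vertex value and $M := u(0)$ for the maximum, for a.e. $t \in (m,M)$ the set $\{\Psi = t\}$ consists of exactly the two points $\pm x(t)$ on the circle, at which $|u'|$ takes equal values by symmetry, whereas for a.e. $t \in (0,m)$ it reduces to a single point on the half-line.

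I would then compare $\Psi$ with its monotone decreasing rearrangement $\Psi^{\sharp}$ onto $[0,\infty)$, which shares the distribution function $\mu(t) := |\{ x \in \mathcal{T} : \Psi(x) > t \}|$ and hence the $L^2$ and $L^6$ norms of $\Psi$; in particular $\| \Psi^{\sharp} \|_{L^6(\mathbb{R}^+)} = 1$, so $\Psi^{\sharp}$ is an admissible competitor in the problem defining $\mathcal{B}_{\mathbb{R}^+}(\omega)$. The coarea formula combined with the Cauchy--Schwarz inequality (this is the Polya--Szeg\"{o} inequality on graphs, Proposition 3.1 in \cite{AST15}) gives, using the multiplicity count above and $\mu'(t) < 0$,
\begin{equation*}
\| \nabla \Psi \|^2_{L^2(\mathcal{T})} \geq \int_m^M \frac{4}{-\mu'(t)}\, dt + \int_0^m \frac{1}{-\mu'(t)}\, dt > \int_0^M \frac{1}{-\mu'(t)}\, dt = \| (\Psi^{\sharp})' \|^2_{L^2(\mathbb{R}^+)},
\end{equation*}
the strict inequality holding because $M > m$ forces the first integral to run over a set of levels of positive measure on which the factor $4$ beats the factor $1$. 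Since the $L^2$ term of $B_{\omega}$ is unchanged by the rearrangement, this yields $B_{\omega}(\Psi^{\sharp};\mathbb{R}^+) < B_{\omega}(\Psi;\mathcal{T}) = \mathcal{B}(\omega)$, and therefore $\mathcal{B}_{\mathbb{R}^+}(\omega) \leq B_{\omega}(\Psi^{\sharp};\mathbb{R}^+) < \mathcal{B}(\omega)$.

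The main obstacle is the strictness of this lower bound, and selecting the correct rearrangement. The symmetric rearrangement onto $\mathbb{R}$ would force a double covering of the (singly covered) half-line part and hence would \emph{increase} the energy --- consistent with $\mathcal{B}(\omega) < \mathcal{B}_{\mathbb{R}}(\omega)$ --- so the comparison relevant to the lower bound is the one-sided monotone rearrangement onto $\mathbb{R}^+$. The strict gain then hinges on the circle being genuinely doubly covered over a range of levels of positive measure, i.e. on $M = u(0) > v(0) = m$, which is guaranteed by the strict monotonicity of the minimizer on $[0,\pi]$ from Lemma \ref{lemma-1-3}; the remaining regularity points (that $-\mu'(t) > 0$ for a.e.\ $t$ and is integrable, and that $\Psi^{\sharp} \in H^1(\mathbb{R}^+)$) are standard.
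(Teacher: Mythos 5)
Your proposal is correct in substance but takes a genuinely different route from the paper for the lower bound. The upper bound and the computation of $\mathcal{B}_{\mathbb{R}^+}(\omega)$, $\mathcal{B}_{\mathbb{R}}(\omega)$ match the paper. For the lower bound, the paper does not rearrange the minimizer at all: it invokes the identity $K_{\mathcal{T}} = K_{\mathbb{R}^+} = 16/\pi^2$ for the sharp Gagliardo--Nirenberg constant from Theorem 3.3 of \cite{AST17}, deduces $\| U' \|^2_{L^2(\mathcal{T})} \geq \pi^2/(16 \| U \|^4_{L^2(\mathcal{T})})$ for every admissible $U$, and then minimizes the scalar function $x \mapsto \frac{\pi^2}{16 x^2} + |\omega| x$ to get $B_{\omega}(U) \geq \frac{3}{4}(\pi|\omega|)^{2/3}$ for \emph{all} $U \in H^1_{\rm C}(\mathcal{T})$, with strictness deferred to the non-attainment argument in \cite{AST17} (since $\mathcal{T}$ is not isometric to $\mathbb{R}^+$). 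You instead unpack the rearrangement argument that underlies that cited result, applying the monotone decreasing rearrangement onto $\mathbb{R}^+$ directly to the minimizer $\Psi$ and extracting strictness from the level-set multiplicity ($N(t)=2$ on $(m,M)$ versus $N(t)=1$ on $(0,m)$). Your version is more self-contained on the strictness but applies only to the minimizer rather than to every competitor, so it leans on Lemmas \ref{lemma-1-0}--\ref{lemma-1-3} for existence and qualitative structure; the paper's version is a pointwise inequality plus a citation. One point you should shore up: your strict inequality requires $M = u(0) > v(0) = m$, and Lemma \ref{lemma-1-3} as stated only gives monotone (not strict) decrease on $[0,\pi]$, so a constant circle component is not excluded by that lemma alone. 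It is excluded, e.g., because the minimizer solves the stationary equation with the Kirchhoff condition $u'(\pi) - u'(-\pi) = v'(0) < 0$, which by the evenness of $u$ forces $u'(\pi) \neq 0$ and hence a non-constant, genuinely decreasing $u$; adding a sentence to this effect closes the gap.
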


\begin{proof}
The upper bound in (\ref{bounds-on-B}) is verified in the proof of Lemma \ref{lemma-1-2}. In order to prove
the lower bound, we use the Gagliardo--Nirenberg inequality on graphs:
$$
\| U \|^6_{L^6(\mathcal{T})} \leq K_{\mathcal{T}} \| U \|^4_{L^2(\mathcal{T})} \| U' \|^2_{L^2(\mathcal{T})},
$$
with
$$
K_{\mathcal{T}} := \sup_{U \in H^1_{\rm C}(\mathcal{T}) : \;\; U \neq 0} \frac{\| U \|^6_{L^6(\mathcal{T})}}{
\| U \|^4_{L^2(\mathcal{T})} \| U' \|^2_{L^2(\mathcal{T})}} =
\left(\inf_{U \in H^1_{\rm C}(\mathcal{T}) : \;\; \| U \|_{L^6(\mathcal{T})} = 1}
\| U \|^4_{L^2(\mathcal{T})} \| U' \|^2_{L^2(\mathcal{T})} \right)^{-1}.
$$
By Theorem 3.3 in \cite{AST17}, it follows that $K_{\mathcal{T}} = K_{\mathbb{R}^+}$
and the constant $K_{\mathbb{R}^+}$ is attained by the half soliton
$\lambda_{\mathbb{R}^+} \varphi_{\omega}$ normalized by $\| \lambda_{\mathbb{R}^+} \varphi_{\omega} \|_{L^6(\mathbb{R}^+)} = 1$.
By similar computations as in the proof of Lemma \ref{lemma-1-2}, we obtain that
$$
\lambda_{\mathbb{R}^+} = \left( \frac{8}{\pi |\omega|} \right)^{1/6} \quad \mbox{\rm and} \quad
K_{\mathbb{R}^+} = \frac{16}{\pi^2},
$$
from which it also follows that
$$
\mathcal{B}_{\mathbb{R}^+}(\omega) := \| \lambda_{\mathbb{R}^+} \varphi_{\omega}' \|_{L^2(\mathbb{R}^+)}^2
- \omega \| \lambda_{\mathbb{R}^+} \varphi_{\omega} \|_{L^2(\mathbb{R}^+)}^2 =
\frac{3}{4} \left( \pi |\omega| \right)^{2/3}.
$$
By using the definition and the value of $K_{\mathcal{T}}$, we obtain for every $U \in H^1_C(\mathcal{T})$,
$$
\| U' \|^2_{L^2(\mathcal{T})} \geq \frac{\pi^2}{16 \| U \|^4_{L^2(\mathcal{T})}}
$$
so that
$$
B_{\omega}(U) \geq \frac{\pi^2}{16 \| U \|^4_{L^2(\mathcal{T})}} + |\omega| \| U \|^2_{L^2(\mathcal{T})} \geq
\frac{3}{4} \left( \pi |\omega| \right)^{2/3}.
$$
where the latter inequality follows from the minimization
of $f(x) := \frac{\pi^2}{16 x^2} + |\omega| x$ in $x$ on $\mathbb{R}^+$.
Hence $B_{\omega}(U) \geq \mathcal{B}_{\mathbb{R}^+}(\omega)$ for every $U \in H^1_C(\mathcal{T})$. Since $\mathcal{T}$
is not isometric to $\mathcal{R}^+$, it follows from the proof
of Theorem 3.3 in \cite{AST17} that the equality cannot be attained
on $\mathcal{T}$, hence $\mathcal{B}(\omega) > \mathcal{B}_{\mathbb{R}^+}(\omega)$.
\end{proof}

Assuming that $\Psi \in H^1_{\rm C}(\mathcal{T})$ is a global minimizer of
the variational problem (\ref{infB}), we show that it yields a solution
$\Phi \in H^2_{\rm NK}(\mathcal{T})$ to the stationary NLS equation (\ref{eqstaz}).

\begin{lemma}
\label{lemma-1-1}
Let $\Psi \in H^1_{\rm C}(\mathcal{T})$ be a minimizer of
the variational problem (\ref{infB}). Then $\Psi \in H^2_{\rm NK}(\mathcal{T})$
and $\Phi := \left( \frac{1}{3} \mathcal{B}_{\omega} \right)^{1/4} \Psi$
is a solution to the stationary NLS equation (\ref{eqstaz}).
\end{lemma}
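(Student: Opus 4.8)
The plan is to apply the Lagrange multiplier rule to the constrained minimization problem (\ref{infB}), derive the weak Euler--Lagrange equation, fix the multiplier by testing against $\Psi$ itself, and then upgrade the weak equation to the strong form (\ref{eqstaz}) by a bootstrap argument that simultaneously recovers the Neumann--Kirchhoff conditions at the vertex.

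First I would note that, since $\| \Psi \|_{L^6(\mathcal{T})} = 1$, the constraint functional $G(U) := \| U \|_{L^6(\mathcal{T})}^6$ has nonvanishing derivative at $\Psi$ (indeed $G'(\Psi)[\Psi] = 6 \neq 0$), so the Lagrange multiplier rule yields a number $\mu \in \RE$ with
\[
\langle \nabla \Psi, \nabla V \rangle - \omega \langle \Psi, V \rangle = 3\mu \langle \Psi^5, V \rangle \qquad \text{for all } V \in H^1_{\rm C}(\mathcal{T}),
\]
after dividing the identity $B_{\omega}'(\Psi) = \mu G'(\Psi)$ by $2$. Here I invoke Lemma~\ref{lemma-1-3} to take $\Psi$ real and nonnegative, so that $|\Psi|^4 \Psi = \Psi^5$. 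Choosing $V = \Psi$ and using $\| \Psi \|_{L^6(\mathcal{T})}^6 = 1$ together with $B_{\omega}(\Psi) = \mathcal{B}(\omega)$ pins down the multiplier as $3\mu = \mathcal{B}(\omega)$. Thus $\Psi$ solves $-\Delta \Psi - \omega \Psi = \mathcal{B}(\omega) \Psi^5$ weakly, and the stated rescaling $\Phi = \left( \frac{1}{3}\mathcal{B}(\omega) \right)^{1/4}\Psi$ converts the coefficient $\mathcal{B}(\omega)$ into $3$, producing (\ref{eqstaz}); this is just the homogeneity check $\mathcal{B}(\omega)/c^4 = 3$ with $c^4 = \frac{1}{3}\mathcal{B}(\omega)$.

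For the regularity I would first restrict to test functions supported in the interior of a single edge. On each edge the nonlinearity is harmless: $\Psi \in H^1_{\rm C}(\mathcal{T})$ embeds into $L^\infty$ componentwise (with decay on the half-line), so $\Psi^5 \in L^2(\mathcal{T})$ and the right-hand side $\omega \Psi + \mathcal{B}(\omega)\Psi^5$ lies in $L^2$. The distributional identity then reads $\Psi'' = -(\omega\Psi + \mathcal{B}(\omega)\Psi^5) \in L^2$ on each edge, giving $u \in H^2(-\pi,\pi)$ and $v \in H^2(0,\infty)$; in particular $u,v \in C^1$ up to the vertex by Sobolev embedding, so the traces $u'(\pm\pi)$ and $v'(0)$ are well defined. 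Continuity $u(\pi)=u(-\pi)=v(0)$ holds automatically because $\Psi \in H^1_{\rm C}(\mathcal{T})$.

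The key step, which I expect to be the main (albeit still routine) obstacle, is recovering the Kirchhoff derivative condition $u'(\pi)-u'(-\pi)=v'(0)$ from the weak formulation with a general $V=(\phi,\psi)\in H^1_{\rm C}(\mathcal{T})$. Integrating $\langle \nabla \Psi,\nabla V\rangle$ by parts edge by edge and writing $V_0 := \phi(\pi)=\phi(-\pi)=\psi(0)$ for the common vertex value, the bulk integrals cancel against $\omega\langle\Psi,V\rangle$ and $3\mu\langle\Psi^5,V\rangle$ precisely because of the strong equation just established, while the boundary contribution at infinity vanishes by decay; what remains is exactly $V_0\bigl(u'(\pi)-u'(-\pi)-v'(0)\bigr)=0$. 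Since $V_0$ may be prescribed arbitrarily through the choice of $V$, the Kirchhoff condition follows, so $\Psi \in H^2_{\rm NK}(\mathcal{T})$ and hence $\Phi \in H^2_{\rm NK}(\mathcal{T})$ is a strong solution of (\ref{eqstaz}).
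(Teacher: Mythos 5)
Your argument is correct and follows essentially the same route as the paper: Lagrange multipliers with the multiplier pinned down by testing against $\Psi$ and using the constraint $\|\Psi\|_{L^6(\mathcal{T})}=1$, followed by elliptic regularity on each edge and recovery of the Kirchhoff condition as the natural boundary condition of the weak formulation. The only cosmetic difference is that the paper obtains $H^2$ regularity in one step via the bounded resolvent $(-\Delta-\omega)^{-1}$ and dispatches the vertex conditions with the phrase ``natural boundary conditions for integration by parts,'' whereas you carry out the edge-by-edge bootstrap and the explicit integration by parts against a general test function $V$ with free vertex value $V_0$.
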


\begin{proof}
By using Lagrange multipliers in $\Sigma_{\omega,\nu}(U) := B_{\omega}(U) - \nu \| U \|_{L^{6}(\mathcal{T})}^{6}$,
we obtain Euler--Lagrange equation for $\Psi$:
\beq \label{eq-Psi}
-\Delta \Psi - 3 \nu \Psi^5 = \omega \Psi.
\eeq
Since $H^1_{\rm C}(\mathcal{T})$ is a Banach algebra with respect to multiplication,
$\Psi^5 \in H^1_{\rm C}(\mathcal{T})$, so that one can rewrite the
Euler--Lagrange equation (\ref{eq-Psi}) in the form
$\Psi = 3 \nu (-\Delta - \omega)^{-1} \Psi^5$, where
$(-\Delta - \omega)^{-1} : L^2(\mathcal{T}) \mapsto H^2(\mathcal{T})$ is a bounded operator
thanks to $\omega < 0$ and $\sigma(-\Delta) \geq 0$. Hence, the same solution
$\Psi$ is actually in $H^2(\mathcal{T})$. Since the boundary conditions
(\ref{dd}) are {\em natural} boundary conditions for integration by parts,
it then follows that $\Psi \in H^2(\mathcal{T}) \cap H^1_{\rm C}(\mathcal{T})$
satisfies the boundary conditions (\ref{dd}) so that $\Psi \in H^2_{\rm NK}(\mathcal{T})$.

It follows from the constraint in (\ref{Bfunctional}) that
$\nu = \frac{1}{3} \mathcal{B}_{\omega} > 0$ since $\| \Psi \|^6_{L^6(\mathcal{T})} = 1$.
The scaled function $\Phi = \nu^{1/4} \Psi$ satisfies the stationary NLS equation (\ref{eqstaz}).
\end{proof}

Lemmas  \ref{lemma-1-0}, \ref{lemma-1-2}, \ref{lemma-1-3}, and \ref{lemma-1-1}
yield the proof of Theorem \ref{theorem-existence}.

\section{Dynamical system methods for the standing waves}
\label{sec-dynamical}

Here we shall prove Theorem \ref{theorem-degeneracy}. We do so by using the dynamical system
methods for characterization of the standing wave $\Phi \in H^2_{\rm NK}(\mathcal{T})$
of Theorem \ref{theorem-existence}. In particular, we reduce the stationary NLS
equation (\ref{eqstaz}) to the second-order differential equation on an interval,
for which we introduce {\em the period function}. By using the analytical theory for differential
equations, we show monotonicity
of the period function, which allows us to control nullity of the
linearization operator $\mathcal{L}$ in (\ref{Jacobian}).

Let $\Phi \in H^2_{\rm NK}(\mathcal{T})$ be a real and positive solution to the stationary
NLS equation (\ref{eqstaz}) with $\omega < 0$ constructed by Theorem \ref{theorem-existence}.
For every $\omega < 0$, we set $\omega = -\varepsilon^4$ and introduce the scaling
transformation for $\Phi = (u,v)$ as follows:
\begin{equation}
\left\{ \begin{array}{ll}
u(x) = \varepsilon U(\varepsilon^2 x), \quad & x \in [-\pi,\pi], \\
v(x) = \varepsilon V(\varepsilon^2 x), \quad & x \in [0,\infty). \end{array} \right.
\label{scaling}
\end{equation}
The boundary-value problem for $(U,V)$ is rewritten in the component form:
\begin{equation}\label{sys}
\left\{\begin{array}{ll}
- U'' + U - 3 U^5 = 0, \quad & z \in (-\pi \varepsilon^2, \pi \varepsilon^2), \\
- V'' + V - 3 V^5 = 0, \quad & z \in (0, \infty),\\
U(\pi \varepsilon^2) = U(-\pi \varepsilon^2) = V(0), & \\
U'(\pi \varepsilon^2) - U'(-\pi \varepsilon^2) = V'(0).&
\end{array}\right.
\end{equation}
By the symmetry property in Theorem \ref{theorem-existence}, we have $U(-z) = U(z)$, $z \in [-\pi \varepsilon^2, \pi \varepsilon^2]$.
By uniqueness of the soliton $\varphi$ on the half-line up to the spatial translation,
we have $V(z) = \varphi(z+a)$, $z \in [0,\infty)$ for some $a \in \mathbb{R}$,
where $\varphi(z) = {\rm sech}^{1/2}(2 z)$. By the monotonicity property in Theorem \ref{theorem-existence},
we have $a \in (0,\infty)$. These simplifications allow us to reduce
the existence problem (\ref{sys}) to the simplified form
\begin{equation}\label{sys1}
\left\{\begin{array}{l}
- U'' + U - 3 U^5 = 0, \quad z \in (0, \pi \varepsilon^2), \\
U'(0) = 0, \\
U(\pi \varepsilon^2) = \varphi(a), \\
2 U'(\pi \varepsilon^2) = \varphi'(a),
\end{array}\right.
\end{equation}
where $a \in (0,\infty)$ and $\varepsilon \in (0,\infty)$.

Many stationary states can be represented by solutions of the boundary-value problem (\ref{sys1}).
However, the monotonicity property in Theorem \ref{theorem-existence} allows us to reduce our consideration
to the unique monotonically decreasing solution $U$ on $[0,\pi \varepsilon^2]$ shown by
blue line on the phase plane $(U,U')$ (Figure \ref{fig-phase}). By the boundary conditions
in the system (\ref{sys1}), the solution is related to the monotonically decreasing part of the homoclinic orbit
shown by red line on the phase plane $(U,U')$ in such a way that
the value of $U$ at the vertex is continuous, where the value of $U'$ at the vertex jumps by half of its value.
The green line is an image of the homoclinic orbit after $U'$ is reduced by half.
The value of $U$ at the vertex is adjusted depending on the value of $\varepsilon$ in the length of the interval $[0,\pi \varepsilon^2]$.

\begin{figure}[h!]
	\centering
	\includegraphics[width=12cm,height=8cm]{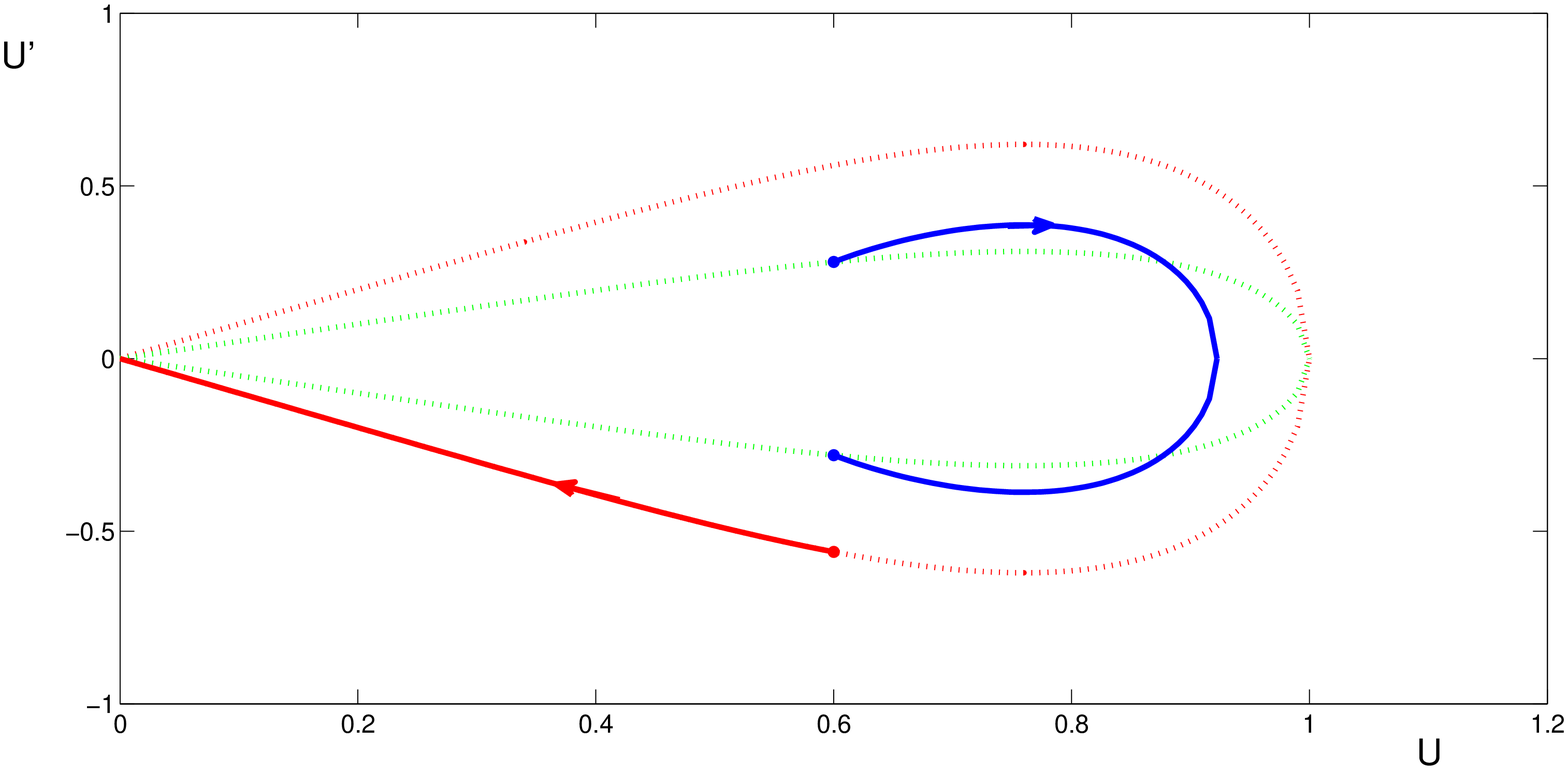}
	\caption{Representation of the solution to the boundary-value problem (\ref{sys}) on the phase plane.}
	\label{fig-phase}
\end{figure}

The phase plane representation of the solution to the boundary-value problem (\ref{sys})
shown on Fig. \ref{fig-phase} is made rigorous in the following lemma.

\begin{lemma}
\label{lemma-2-1}
For every $a > 0$ there exists a unique value of $\varepsilon > 0$ for which
there exists a unique solution $U \in C^2(0,\pi \varepsilon^2)$
to the boundary-value problem (\ref{sys1}) such that $U$ is monotonically decreasing on $[0,\pi\varepsilon^2]$.
Moreover, the map $(0,\infty) \ni a \mapsto \varepsilon(a) \in (0,\infty)$ is $C^1$ and monotonically increasing.
\end{lemma}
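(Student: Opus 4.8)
The plan is to reduce (\ref{sys1}) to a planar conservative system and to read off everything from its phase portrait. Multiplying $-U'' + U - 3U^5 = 0$ by $U'$ and integrating shows that $H(U,U') := \frac{1}{2}(U')^2 - \frac{1}{2}U^2 + \frac{1}{2}U^6$ is conserved. The equilibria are a saddle at $(0,0)$ lying on $\{H=0\}$ and a center at $(3^{-1/4},0)$, and the homoclinic loop is the level set $H = 0$, on which $(U')^2 = U^2 - U^6 = U^2(1-U^4)$. The half-line soliton $\varphi(z) = \sech^{1/2}(2z)$ traces exactly this loop, with $\varphi(0)=1$ and $\varphi$ strictly decreasing on $(0,\infty)$; hence for $a>0$ I set $b := \varphi(a) \in (0,1)$ and use $\varphi'(a) = -b\sqrt{1-b^4} < 0$. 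The vertex data then read $U(\pi\varepsilon^2) = b$ and $U'(\pi\varepsilon^2) = \frac{1}{2}\varphi'(a) = -\frac{1}{2} b\sqrt{1-b^4}$.

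A monotonically decreasing solution with $U'(0)=0$ must start at the right turning point of a closed orbit around the center, so $U(0) =: c$ with $U''(0) = c(1-3c^4) < 0$, forcing $c > 3^{-1/4}$; the energy is $H = -\frac{1}{2} P(c)$ with $P(s) := s^2 - s^6$. Requiring the vertex point $(b, -\frac{1}{2} b\sqrt{1-b^4})$ to lie on this orbit means $(U')^2 = P(b) - P(c)$ at $U=b$, and comparing with $\frac{1}{4} b^2(1-b^4) = \frac{1}{4} P(b)$ yields the gluing relation $P(c) = \frac{3}{4} P(b)$. Since $P$ increases on $(0,3^{-1/4})$, decreases on $(3^{-1/4},1)$, has maximum $\frac{2}{3\sqrt3}$, and $\frac{3}{4} P(b) < \frac{1}{2\sqrt3} < \frac{2}{3\sqrt3}$ for all $b \in (0,1)$, there is a unique root $c = c(a)$ on the admissible branch $(3^{-1/4},1)$ (the relation also excludes $c \ge 1$, where $P(c) \le 0$). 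Its turning points satisfy $U_-(c) < 3^{-1/4} < c$, and $U_-(c) < b < c$ because $(U')^2 = P(b)-P(c) = \frac{1}{4}P(b) > 0$, so the orbit descends monotonically from $c$ through $b$. The descent time is the quadrature $T(a) = \int_{b}^{c} \frac{dU}{\sqrt{P(U) - P(c)}}$, and $U(\pi\varepsilon^2)=b$ forces $\pi\varepsilon^2 = T(a)$, determining $\varepsilon$ and hence $U$ uniquely. This settles the existence and uniqueness claims.

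For regularity I would fix $c=c(a)$ by the implicit function theorem applied to $G(c,a) := P(c) - \frac{3}{4} P(\varphi(a))$: since $c > 3^{-1/4}$ we have $P'(c) \neq 0$, so $a \mapsto c(a)$ is $C^1$. The integral $T$ has only an integrable square-root singularity at the turning point $U=c$; after a regularizing substitution sending a neighborhood of $U=c$ to a fixed interval (for instance $U = c\sqrt{1 - \kappa\sin^2\psi}$, or splitting at $U = 3^{-1/4}$ when $b < 3^{-1/4} < c$), $T$ is a smooth function of $(b,c)$ and hence of $a$. As $\varepsilon(a) = \sqrt{T(a)/\pi}$ with $T>0$, the map $a \mapsto \varepsilon(a)$ is $C^1$.

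The main obstacle is the strict monotonicity $\varepsilon'(a) > 0$, equivalently $\frac{d}{da}T(a) > 0$. This is genuinely delicate because $c(a)$ is \emph{not} monotone: as $a$ grows, $b=\varphi(a)$ decreases through $(0,1)$, so $\frac{3}{4}P(b)$ first increases (while $b>3^{-1/4}$) and then decreases, making $c(a)$ first decrease and then increase. Thus monotonicity of $T$ cannot be inherited from either endpoint and must be extracted from the full dependence. I would differentiate the regularized form of $T$ in $a$, cancelling the singular boundary contribution at $U=c$ against the singular part of the $c$-derivative of the integrand, and reduce the claim to the sign of an explicit integral built from $P(U)=U^2-U^6$, $P'(U) = 2U(1-3U^4)$, and the relation $P(c) = \frac{3}{4} P(b)$; the specific quintic structure, most transparently through the elliptic-function representation of the orbit, is what should ultimately determine that sign. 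As a consistency check fixing the direction, the limits $b\to 1,\ c\to 1$ give $T(a)\to 0$ as $a\to 0^+$, while $b\to 0$ (descent approaching the saddle at $U=0$) gives $T(a)\to\infty$ as $a\to\infty$; once $T'(a)$ is shown never to vanish, these limits force the monotonicity to be increasing.
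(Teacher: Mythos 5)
Your setup coincides with the paper's: the first integral $E=(U')^2-U^2+U^6$, the gluing relation $E=-\tfrac34[\varphi'(a)]^2$ (your $P(c)=\tfrac34 P(b)$), the identification of the admissible turning point $U_+\geq 3^{-1/4}$, the quadrature $\pi\varepsilon^2=\int_{U_0}^{U_+}du/\sqrt{E+u^2-u^6}$, and the limiting values $T\to 0$, $T\to\infty$ at the two ends of the parameter range. The existence/uniqueness part and the $C^1$ regularity of the period function (which you handle by a regularizing substitution, the paper by rewriting $T$ via an exact differential $d\bigl(2p[A(u)-A(U_*)]/A'(u)\bigr)$ with $p=\sqrt{E+A(u)}$) are both sound.

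The genuine gap is that the heart of the lemma --- the strict monotonicity $T'<0$ of the period function, equivalently $\varepsilon'(a)>0$ --- is announced but not proved. You correctly diagnose that it is delicate (the turning point $c(a)$ is not monotone in $a$, so nothing can be read off from the endpoints alone), but you stop at ``I would differentiate \dots and reduce the claim to the sign of an explicit integral \dots which should ultimately determine that sign.'' That sign determination is the entire content of the step, and it does not follow from soft arguments: the paper must compute $T'(U_0)$ explicitly, observe that the integrand factor $1-2A''(u)[A(u)-A(U_*)]/[A'(u)]^2$ equals $\frac{2(1-\sqrt3 u^2)(1+3\sqrt3 u^2+3u^4)}{3\sqrt3 u^2(1+\sqrt3 u^2)^2}$, and then split into the cases $U_0\in(U_*,1)$ (where the sign is immediate) and $U_0\in(0,U_*)$ (where a further integration by parts is needed to combine a positive boundary term with the negative term $-A(U_*)/(2\sqrt{A(U_0)})$ into a manifestly negative quantity). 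Your closing remark that the limits $T\to 0$ and $T\to\infty$ ``force the monotonicity to be increasing once $T'$ is shown never to vanish'' is true but presupposes exactly the nonvanishing you have not established. As written, the proposal proves existence, uniqueness, and $C^1$ dependence, but not the monotonicity assertion of the lemma.
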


\begin{proof}
The differential equation $-U'' + U - 3 U^5 = 0$ can be solved in quadrature with the first-order invariant:
\begin{equation}
\label{first}
E = (U')^2 - U^2 + U^6.
\end{equation}
Since the value of $E$ is constant in $z$, we obtain the exact value of $E$ for the admissible solution
to the system (\ref{sys1}):
\begin{equation}
\label{a-to-energy}
E = \frac{1}{4} [\varphi'(a)]^2 - \varphi(a)^2 + \varphi(a)^6 = -\frac{3}{4} [\varphi'(a)]^2 < 0.
\end{equation}
Let $U_0(a) := \varphi(a)$, so that the map $(0,\infty) \ni a \mapsto U_0(a) \in (0,1)$ is $C^1$ and monotonically decreasing.
Let $U_+(a)$ be the largest positive root of $E + U^2 - U^6 = 0$ such that $U_+(a) \geq U_* := \frac{1}{3^{1/4}}$,
which exists if $E \in (E_0,0)$, where $E_0 := -\frac{2}{3 \sqrt{3}}$. It follows from
(\ref{a-to-energy}) that the latter requirement is satisfied for every $a \in (0,\infty)$.
By means of the first-order invariant (\ref{first}), the boundary-value problem
(\ref{sys1}) is solved in the following quadrature:
\begin{equation}
\label{quadrature}
\pi \varepsilon^2 = \int_{U_0}^{U_+} \frac{du}{\sqrt{E + u^2 - u^6}}.
\end{equation}
Since $E$, $U_0$, and $U_+$ are uniquely defined by $a \in (0,\infty)$,
the value of $\varepsilon$ is uniquely defined by (\ref{quadrature}) from the value of $a \in (0,\infty)$.

It remains to prove that the map $(0,1) \ni U_0 \mapsto \varepsilon(U_0) \in (0,\infty)$ is $C^1$ and monotonically decreasing.
Since the map $(0,\infty) \ni a \mapsto U_0(a) \in (0,1)$ is $C^1$ and monotonically decreasing,
the two results imply that the composite map the map $(0,\infty) \ni a \mapsto \varepsilon(a) \in (0,\infty)$ is $C^1$ and monotonically increasing,
which yields the assertion of the lemma with the solution $U$ given in the implicit form by the quadrature (\ref{quadrature}).

To prove monotonicity of the $C^1$ map $(0,1) \ni U_0 \mapsto \varepsilon(U_0) \in (0,\infty)$,
we use the technique developed for the flower graph in the cubic NLS equation \cite{KMPZ}. We define
the following period function:
\begin{equation}
\label{period-function}
T(U_0) := \int_{U_0}^{U_+} \frac{du}{\sqrt{E + A(u)}}, \quad U_0 \in (0,1),
\end{equation}
where $A(u) = u^2 - u^6$, $U_+$ is the largest positive root of $E + A(u) = 0$ such that $U_+ \geq U_* := \frac{1}{3^{1/4}}$,
and $E$ is given by $E = \frac{1}{4} A(U_0) - A(U_0) = -\frac{3}{4} A(U_0)$. The value $U_*$ is the only critical (maximum) point of $A(U)$
on $\mathbb{R}^+$ with $A(U_*) = \frac{2}{3 \sqrt{3}}$ and $A'(U_*) = 0$.

Recall that if $W(u,v)$ is a $C^1$ function in an open region of $\mathbb{R}^2$, then the differential of $W$ is defined by 
$$
dW(u,v) = \frac{\partial W}{\partial u} du + \frac{\partial W}{\partial v} dv
$$
and the line integral of $d W(u,v)$ along any $C^1$ contour $\gamma$ connecting two points $(u_0,v_0)$ and $(u_1,v_1)$ does not depend on $\gamma$ and is evaluated as 
$$
\int_{\gamma} d W(u,v) = W(u_1,v_1) - W(u_0,v_0).
$$

Define $p := \sqrt{E + A(u)}$ and compute for every $u \in (0,1)$:
\begin{eqnarray*}
d \left( \frac{2 p [A(u) - A(U_*)]}{A'(u)} \right) & = &
2 \left[ 1 - \frac{A''(u) [A(u) - A(U_*)]}{[A'(u)]^2} \right] p du + \frac{2[ A(u) - A(U_*)]}{A'(u)} dp \\
& = & 2 \left[ 1 - \frac{A''(u) [A(u) - A(U_*)]}{[A'(u)]^2} \right] p du + \frac{A(u) - A(U_*)}{p} du.
\end{eqnarray*}
All terms in this expression are non-singular for every $u \in (0,1)$. It enables us to express the period function $T(U_0)$ in the equivalent way:
\begin{eqnarray*}
[E + A(U_*)] T(U_0) & = & \int_{U_0}^{U_+} p du - \int_{U_0}^{U_+} \frac{A(u) - A(U_*)}{p} du \\
& = & \int_{U_0}^{U_+} \left[ 3 - 2 \frac{A''(u) [A(u) - A(U_*)]}{[A'(u)]^2} \right] p du + \frac{A(U_0) - A(U_*)}{A'(U_0)} \sqrt{A(U_0)},
\end{eqnarray*}
where we have used that $2 \sqrt{E + A(U_0)} = \sqrt{A(U_0)}$. The right-hand side is $C^1$ in $U_0$ on $(0,1)$,
which proves that the map $(0,1) \ni U_0 \mapsto \varepsilon(U_0) \in (0,\infty)$ is $C^1$ in $U_0$.
Moreover, we compute the derivative explicitly by
\begin{eqnarray}
\nonumber
[E + A(U_*)] T'(U_0) & = & -\frac{3}{8} A'(U_0) \int_{U_0}^{U_+} \left[ 1 - 2 \frac{A''(u) [A(u) - A(U_*)]}{[A'(u)]^2} \right] \frac{du}{p} \\
& \phantom{t} & - \frac{A(U_*)}{2\sqrt{A(U_0)}},
\label{derivative-T}
\end{eqnarray}
where we have used that $\frac{dE}{dU_0} = -\frac{3}{4} A'(U_0)$.

We need to prove that $T'(U_0) < 0$. Note that $E + A(U_*) > 0$ and the last term in the right-hand side of (\ref{derivative-T}) is negative.
In order to analyze the first term in the right-hand side of (\ref{derivative-T}), we compute directly
\begin{eqnarray}
\nonumber
1 - 2 \frac{A''(u) [A(u) - A(U_*)]}{[A'(u)]^2} & = &
1 + \frac{(1-15 u^4) (2 - 3 \sqrt{3} u^2 + 3 \sqrt{3} u^6)}{3 \sqrt{3} u^2 (1-\sqrt{3} u^2)^2 (1 + \sqrt{3} u^2)^2} \\
\nonumber
& = & 1 + \frac{(1-15 u^4) (2 + \sqrt{3} u^2)}{3 \sqrt{3} u^2 (1 + \sqrt{3} u^2)^2} \\
& = & \frac{2 (1 - \sqrt{3} u^2) (1 + 3 \sqrt{3} u^2 + 3 u^4)}{3 \sqrt{3} u^2 (1 + \sqrt{3} u^2)^2}.
\label{derivative-T-term}
\end{eqnarray}
If $U_0 \in (U_*,1)$, then $A'(U_0) < 0$ and the right-hand side of (\ref{derivative-T-term}) is negative for every $u \in [U_0,U_+]$.
Hence the first term in the right-hand side of (\ref{derivative-T}) is negative and so is $T'(U_0)$ if $U_0 \in (U_*,1)$.
Thus, $T'(U_0) < 0$ for $U_0 \in (U_*,1)$. Since $A'(U_*) = 0$, we also have $T'(U_*) < 0$.

In order to study $T'(U_0)$ for $U_0 \in (0,U_*)$, for which $A'(U_0) > 0$,
we need to integrate the expression in (\ref{derivative-T-term}) by parts:
\begin{eqnarray*}
& \phantom{t} & \int_{U_0}^{U_+} \frac{2 (1 - \sqrt{3} u^2) (1 + 3 \sqrt{3} u^2 + 3 u^4)}{3 \sqrt{3} u^2 (1 + \sqrt{3} u^2)^2} \frac{du}{\sqrt{E + A(u)}} =
\int_{U_0}^{U_+} \frac{1 + 3 \sqrt{3} u^2 + 3 u^4}{3 \sqrt{3} u^3 (1 + \sqrt{3} u^2)^3} \frac{ A'(u) du}{\sqrt{E + A(u)}} \\
& \phantom{t} &  = - \frac{1 + 3 \sqrt{3} U_0^2 + 3 U_0^4}{3 \sqrt{3} U_0^3 (1 + \sqrt{3} U_0^2)^3} \sqrt{A(U_0)}
+ 2 \int_{U_0}^{U_+} \frac{1 + 4 \sqrt{3} u^2 + 20 u^4 + 5 \sqrt{3} u^6}{\sqrt{3} u^4 (1 + \sqrt{3} u^2)^4} \sqrt{E + A(u)} du,
\end{eqnarray*}
where the first term is negative and the last term is positive. Recall that $A'(U_0) > 0$ for $U_0 \in (0,U_*)$.
Combining the last negative term in the right-hand side of (\ref{derivative-T})
and the first positive term obtained from the previous expression yields
\begin{eqnarray*}
& \phantom{t} & \frac{1 + 3 \sqrt{3} U_0^2 + 3 U_0^4}{8 \sqrt{3} U_0^3 (1 + \sqrt{3} U_0^2)^3} A'(U_0) \sqrt{A(U_0)}
- \frac{A(U_*)}{2\sqrt{A(U_0)}} \\
& \phantom{t} & = -\frac{1 + 2 \sqrt{3} U_0^2 + 33 U_0^4 + 15 \sqrt{3} U_0^6 - 18 U_0^8 - 9 \sqrt{3} U_0^{10}}{
12 \sqrt{3} \sqrt{A(U_0)} (1 + \sqrt{3} U_0^2)^2} \\
& \phantom{t} & = -\frac{1 + 2 \sqrt{3} U_0^2 + 15 U_0^4 +
18 U_0^4 (1 - U_0^4) + 6 \sqrt{3} U_0^6 + 9 \sqrt{3} U_0^6 (1 - U_0^4)}{12 \sqrt{3} \sqrt{A(U_0)} (1 + \sqrt{3} U_0^2)^2},
\end{eqnarray*}
which is negative for every $U_0 \in (0,U_*)$. Hence the right-hand side of (\ref{derivative-T}) is negative
and so is $T'(U_0)$ for every $U_0 \in (0,U_*)$. Thus, we have proven that $T'(U_0) < 0$ for every $U_0 \in (0,1)$,
from which it follows that the map $(0,1) \ni U_0 \mapsto \varepsilon(U_0) \in (0,\infty)$ is monotonically decreasing.

Finally, we check the asymptotic limits
$$
\lim_{U_0 \to 1} T(U_0) = 0 \quad \mbox{\rm and} \quad \lim_{U_0 \to 0} T(U_0) = \infty,
$$
which imply that  the map $(0,1) \ni U_0 \mapsto \varepsilon(U_0) \in (0,\infty)$ is onto. Indeed, as $U_0 \to 1$,
we have $U_+ \to 1$ so that $|U_+-U_0| \to 0$ as $U_0 \to 1$. Since the weakly singular integral is integrable,
we have
$$
T(U_0) = \int_{U_0}^{U_+} \frac{du}{\sqrt{A(u) - A(U_+)}} \to 0 \quad \mbox{\rm as} \quad U_0 \to 1.
$$
On the other hand, the period function $T(U_0)$ is estimated from below for every $0 < U_0 < U_+ < 1$ by
$$
T(U_0) = \int_{U_0}^{U_+} \frac{du}{\sqrt{A(u) - A(U_+)}} \geq \int_{U_0}^{U_+} \frac{du}{u \sqrt{1 - u^4}},
$$
and since $U_+ \to 1$ as $U_0 \to 0$, the lower bound diverges as $U_0 \to 0$.
\end{proof}

The following lemma characterizes the nullity index $z(\mathcal{L})$ of the linearized operator
$\mathcal{L} : H^2_{\rm NK}(\mathcal{T}) \subset L^2(\mathcal{T}) \mapsto L^2(\mathcal{T})$
given by (\ref{Jacobian})  for every $\omega < 0$.

\begin{lemma}
\label{lemma-3-1}
Let $\Phi \in H^2_{\rm NK}(\mathcal{T})$ be a solution to the stationary
NLS equation (\ref{eqstaz}) with $\omega < 0$ defined by (\ref{scaling})
with fixed $\varepsilon > 0$.
Then, $\sigma_{ac}(\mathcal{L}) = [|\omega|,\infty)$ and $z(\mathcal{L}) = 0$.
\end{lemma}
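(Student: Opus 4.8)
The plan is to pass to the scaled variables of \eqref{scaling}. Writing the kernel element as $W=(p,q)$ and rescaling as in \eqref{scaling}, the equation $\mathcal{L}W=0$ becomes, after division by $\varepsilon^{4}$, the pair of decoupled linear ODEs
\begin{equation*}
-p''+p-15U^{4}p=0,\quad z\in(-\pi\varepsilon^{2},\pi\varepsilon^{2}),\qquad -q''+q-15\,\varphi(\cdot+a)^{4}q=0,\quad z\in(0,\infty),
\end{equation*}
subject to the Neumann--Kirchhoff conditions at the vertex and to $q\in L^{2}(0,\infty)$. The identity $\sigma_{ac}(\mathcal{L})=[|\omega|,\infty)$ is Weyl's theorem, already recorded in the introduction, so it remains to prove $z(\mathcal{L})=0$. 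Since $\mathcal{L}$ commutes with the reflection $R\colon(p,q)\mapsto(p(-\cdot),q)$ of $\mathcal{T}$, which is an involution acting as the identity on the half-line, I would decompose $\ker\mathcal{L}$ into its $R$-even part (with $p$ even and $q$ free) and its $R$-odd part (with $p$ odd, which forces $q=0$), and rule out each separately.

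For the odd part, the evenness of the profile makes the circle equation invariant under $z\mapsto-z$, so every solution with $p(0)=0$ is odd and the odd solutions form the one-dimensional space spanned by $U'$ (indeed $U'(0)=0$ by evenness of $U$). Setting $q=0$ in the vertex conditions forces $p(\pm\pi\varepsilon^{2})=0$, hence, for the candidate $p=c\,U'$, the requirement $U'(\pi\varepsilon^{2})=0$. This is impossible because $U$ is strictly monotonically decreasing on $(0,\pi\varepsilon^{2}]$ by Theorem \ref{theorem-existence}, so $U'(\pi\varepsilon^{2})<0$. Thus the antisymmetric part of the kernel is trivial.

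For the even part I would use the two distinguished solutions of the linearized ODEs. On the circle, differentiating the orbit $U(z;E)$ of energy $E$ (with $U(0)=U_{+}$, $U'(0)=0$) with respect to $E$ produces the even solution $\psi:=\partial_{E}U$, which spans the one-dimensional space of even solutions since $\psi(0)=\partial_{E}U_{+}\neq0$ for $U_{+}>U_{*}$. On the half-line, the unique (up to scale) decaying solution is $\varphi'(\cdot+a)$, obtained by differentiating $-\varphi''+\varphi-3\varphi^{5}=0$ in $z$; the $L^{2}$ constraint discards the exponentially growing companion. Writing $p=c_{2}\psi$ and $q=c_{1}\varphi'(\cdot+a)$ and imposing continuity together with the Kirchhoff jump at the vertex (with $p$ even, so $p'$ is odd and the jump equals $2p'(\pi\varepsilon^{2})$) yields a $2\times2$ homogeneous system whose solvability is governed by
\begin{equation*}
D:=\psi(\pi\varepsilon^{2})\,\varphi''(a)-2\psi'(\pi\varepsilon^{2})\,\varphi'(a).
\end{equation*}
A nontrivial even kernel element exists if and only if $D=0$, so the goal reduces to showing $D\neq0$.

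The crux -- and the step where the period function enters -- is to identify $D$ explicitly. Here I would regard the whole family of tadpole solutions as parametrized by $a$, with the circle orbit of energy $E(a)=-\tfrac34[\varphi'(a)]^{2}$ glued at $z=L(a):=\pi\varepsilon(a)^{2}$, and differentiate the two matching relations $U(L(a);E(a))=\varphi(a)$ and $U'(L(a);E(a))=\tfrac12\varphi'(a)$ in $a$. Using $U''(L)=U(L)-3U(L)^{5}=\varphi''(a)$ and $E'(a)=-\tfrac32\varphi'(a)\varphi''(a)$, these relations express $\psi(L)E'(a)$ and $\psi'(L)E'(a)$ through $\varphi',\varphi'',L'$; substituting them into $D\cdot E'(a)$ collapses, after the common factor $\varphi'(a)\varphi''(a)$ cancels, to the clean identity
\begin{equation*}
D=-L'(a)=-2\pi\varepsilon\,\varepsilon'(a).
\end{equation*}
By Lemma \ref{lemma-2-1} the map $a\mapsto\varepsilon(a)$ is $C^{1}$ and strictly increasing, so $\varepsilon'(a)>0$ and $D<0$. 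At the isolated value $a=a_{*}$ where $\varphi''(a_{*})=0$ the derivation divides by $E'(a_{*})=0$; there I would simply invoke continuity of $D$ and of $L'$ to conclude $D(a_{*})=-L'(a_{*})<0$ as well. Hence $D\neq0$ for every $\omega<0$, the even kernel is trivial, and $z(\mathcal{L})=0$. The main obstacle is precisely this reduction of the Wronskian-type quantity $D$ to $\varepsilon'(a)$: the apparently dangerous vanishing of $\varphi''$ is rendered harmless by the cancellation, and once the reduction is in place the conclusion is immediate from the monotonicity already established in Lemma \ref{lemma-2-1}.
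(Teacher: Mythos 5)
Your proposal is correct and follows the same skeleton as the paper's proof: reduce to the scaled boundary-value problem, kill the odd (circle) component using $U'(\pi\varepsilon^2)=\tfrac12\varphi'(a)\neq 0$, take $\partial_E U$ as the even solution on the circle and $\varphi'(\cdot+a)$ as the decaying solution on the half-line, and derive the obstruction by differentiating the two matching relations along the solution family. Where you genuinely improve on the paper's execution is in the endgame. First, you package the vertex conditions as a $2\times 2$ determinant $D$ and reduce it, after differentiating in $a$ rather than in $\varepsilon$, to the single identity $D=-L'(a)$ with $L(a)=\pi\varepsilon(a)^2$; the paper instead compares two ratios $2W'/W$ and arrives at the incompatibility $a'(\varepsilon)-4\pi\varepsilon\neq a'(\varepsilon)-\pi\varepsilon$, which is equivalent but less transparent. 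Your identity makes explicit that non-degeneracy in the even sector is exactly the transversality $L'(a)\neq 0$, i.e.\ the strict monotonicity of the period map. Second, at the exceptional point $a_0$ with $\varphi''(a_0)=0$, where your derivation divides by $E'(a)$, you extend $D=-L'$ by continuity; the paper instead invokes the Wronskian-type relation $1=2U'W'-2UW(1-3U^4)$ to reach $1=0$. Both are valid, and your continuity argument is sound because $\varphi''$ vanishes only at the isolated point $a_0$ while $D$ and $L'$ are continuous. One small point to make explicit: the statement of Lemma \ref{lemma-2-1} only says the map $a\mapsto\varepsilon(a)$ is monotonically increasing, which by itself gives $\varepsilon'(a)\geq 0$; to conclude $D<0$ pointwise you should cite the strict inequality $T'(U_0)<0$ for every $U_0\in(0,1)$ established inside the proof of that lemma, which together with $\varphi'(a)<0$ gives $L'(a)=T'(\varphi(a))\,\varphi'(a)>0$ at every $a$, including $a_0$.
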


\begin{proof}
Let us consider the spectral problem $\mathcal{L} \Upsilon = \lambda \Upsilon$.
By Weyl's Theorem, thanks to the exponential decay of $v(x) \to 0$ as $x \to \infty$,
$\sigma_{ac}(\mathcal{L}) = \sigma_{ac}(-\Delta - \omega) = [|\omega|,\infty)$.
Therefore, $\lambda = 0$ is isolated from the absolute continuous spectrum of $\mathcal{L}$.

We consider the most general solution of $\mathcal{L} \Upsilon = 0$ and
prove that $\Upsilon \notin H^2_{NK}(\mathcal{T})$ for every $\omega < 0$.
We use the representation $\omega = -\varepsilon^4$ and the scaling
transformation (\ref{scaling}) for $\Phi = (u,v) \in H^2_{\rm NK}(\mathcal{T})$.
Similarly, we represent $\Upsilon = (\mathfrak{u},\mathfrak{v})$ by using the scaling
transformation
\begin{equation}
\left\{ \begin{array}{ll}
\mathfrak{u}(x) = \mathfrak{U}(\varepsilon^2 x), \quad & x \in [-\pi,\pi], \\
\mathfrak{v}(x) = \mathfrak{V}(\varepsilon^2 x), \quad & x \in [0,\infty), \end{array} \right.
\label{scaling-lin}
\end{equation}
from which the following boundary-value problem is obtained for $(\mathfrak{U},\mathfrak{V})$:
\begin{equation}\label{sys-lin}
\left\{\begin{array}{ll}
- \mathfrak{U}'' + \mathfrak{U} - 15 U^4 \mathfrak{U} = 0, \quad & z \in (-\pi \varepsilon^2, \pi \varepsilon^2), \\
- \mathfrak{V}'' + \mathfrak{V} - 15 V^4 \mathfrak{V} = 0, \quad & z \in (0, \infty),\\
\mathfrak{U}(\pi \varepsilon^2) = \mathfrak{U}(-\pi \varepsilon^2) = \mathfrak{V}(0), &\\
\mathfrak{U}'(\pi \varepsilon^2) - \mathfrak{U}'(-\pi \varepsilon^2) = \mathfrak{V}'(0). &
\end{array}\right.
\end{equation}

We are looking for a solution $(\mathfrak{U},\mathfrak{V}) \in H^2_{NK}(\mathcal{T})$
to the boundary-value problem (\ref{sys-lin}) so that $\mathfrak{V}(z) \to 0$ as $z \to \infty$.
Recall that $V(z) = \varphi(z+a)$ with $a \in (0,\infty)$ defined uniquely
in terms of $\varepsilon \in (0,\infty)$. Then, the only decaying solution
to the second equation in the system (\ref{sys-lin}) takes the form:
\begin{equation}
\label{solution-v}
\mathfrak{V}(z) = \alpha \varphi'(z+a),
\end{equation}
where $\alpha \in \mathbb{C}$ is arbitrary. The general solution
to the first equation in the system (\ref{sys-lin}) can be written in the form:
\begin{equation}
\label{solution-u}
\mathfrak{U}(z) = \beta U'(z) + \gamma W(z),
\end{equation}
where $\beta,\gamma \in \mathbb{C}$ are arbitrary and $W$ is a linearly independent
solution to $U'$. Thanks to the symmetry of the coefficients to the first equation in the system
(\ref{sys-lin}), $W(-z) = W(z)$ and $U'(-z) = -U'(z)$. By using the boundary conditions
in the system (\ref{sys-lin}), we obtain the linear system on coefficients
of the solutions (\ref{solution-v}) and (\ref{solution-u}):
\begin{equation}
\label{system-solution}
\left\{ \begin{array}{l} \beta U'(\pi \varepsilon^2) + \gamma W(\pi \varepsilon^2) = \alpha \varphi'(a), \\
-\beta U'(\pi \varepsilon^2) + \gamma W(\pi \varepsilon^2) = \alpha \varphi'(a),  \\
2 \gamma W'(\pi \varepsilon^2) = \alpha \varphi''(a),
\end{array} \right.
\end{equation}
Since $U'(\pi \varepsilon^2) = \frac{1}{2} \varphi'(a) \neq 0$ for every $a \in (0,\infty)$,
it follows from the system (\ref{system-solution}) that $\beta = 0$ and
a nonzero solution for $(\alpha,\gamma)$ exists if and only if
\begin{equation}
\label{BC-balance}
W(\pi \varepsilon^2) \neq 0 \quad \mbox{\rm and} \quad
\frac{2 W'(\pi \varepsilon^2)}{W(\pi \varepsilon^2)} = \frac{\varphi''(a)}{\varphi'(a)}.
\end{equation}

We shall now express the even solution $W$ to $- \mathfrak{U}'' + \mathfrak{U} - 15 U^4 \mathfrak{U} = 0$.
Let $U(z;E)$ be an even solution of the first-order invariant (\ref{first})
with free parameter $E < 0$ normalized by the boundary condition $U(0;E) = U_+(E)$,
where $U_+(E)$ is the largest positive root of $E + U^2 - U^6 = 0$ such that $U_+(E) \geq U_* := \frac{1}{3^{1/4}}$.
Let $E(\varepsilon)$ be defined for every $\varepsilon > 0$ by the boundary conditions:
\begin{equation}
\label{BC-U}
\left\{
\begin{array}{l}
U(\pi \varepsilon^2; E(\varepsilon)) = \varphi(a), \\
U'(\pi \varepsilon^2; E(\varepsilon)) = \frac{1}{2} \varphi'(a),
\end{array} \right.
\end{equation}
which means that $E(\varepsilon) = -\frac{3}{4} \left[ \varphi'(a)\right]^2$
in accordance with (\ref{a-to-energy}), where
$a \in (0,\infty)$ is uniquely defined from $\varepsilon \in (0,\infty)$ by Lemma \ref{lemma-2-1}.

Since $U(z;E)$ satisfies the second-order equation $-U'' + U - 3 U^5 = 0$,
it follows that $W(z) := \partial_E U(z;E(\varepsilon))$ satisfies the equation
$- \mathfrak{U}'' + \mathfrak{U} - 15 U^4 \mathfrak{U} = 0$ with $U \equiv U(z;E(\varepsilon))$.
Since $E$ is a $C^1$ function of $a$ in (\ref{a-to-energy})
and $a$ is a $C^1$ function of $\varepsilon$ obtained by inverting the monotone $C^1$
mapping $a \mapsto \varepsilon(a)$ in Lemma \ref{lemma-2-1},
we have that $E$ is a $C^1$ function of $\varepsilon$. Differentiating
the boundary conditions (\ref{BC-U}) in $\varepsilon$, we obtain
\begin{equation}
\label{BC-W}
\left\{
\begin{array}{l}
W(\pi \varepsilon^2) E'(\varepsilon) + \pi \varepsilon \varphi'(a) = \varphi'(a) a'(\varepsilon), \\
W'(\pi \varepsilon^2) E'(\varepsilon) + 2 \pi \varepsilon \varphi''(a)  = \frac{1}{2} \varphi''(a) a'(\varepsilon),
\end{array} \right.
\end{equation}
where we have used that
$$
U''(\pi \varepsilon^2;E(\varepsilon)) = U(\pi \varepsilon^2;E(\varepsilon)) - 3 U(\pi \varepsilon^2;E(\varepsilon))^5 =
\varphi(a) - 3 \varphi(a)^5 = \varphi''(a).
$$
Recall that $\varphi'(a) \neq 0$ for every $a \in (0,\infty)$.
If $\varphi''(a) \neq 0$ (which is true for every $a \in (0,\infty)$
except for $a = a_0 := \frac{1}{2} {\rm arccosh}(\sqrt{3})$),
then $E'(\varepsilon) \neq 0$ and the boundary conditions (\ref{BC-W}) are equivalent to
\be
\frac{2 W'(\pi \varepsilon^2)}{W(\pi \varepsilon^2)} = \frac{\varphi''(a) \left[ a'(\varepsilon) - 4 \pi \varepsilon \right]}{\varphi'(a) \left[ a'(\varepsilon) - \pi \varepsilon \right]},
\ee
which is incompatible with the required boundary condition (\ref{BC-balance}) for every $\varepsilon > 0$.
In the exceptional case $a = a_0$, for which $\varphi''(a_0) = 0$, it follows from (\ref{system-solution})
with $\gamma \neq 0$ that $W'(\pi \varepsilon^2) = 0$. However, differentiating the first-order invariant (\ref{first}) in $E$ yields
the relation
\begin{equation}
\label{tech-constraint}
1 = 2 U'(z) W'(z) - 2 U(z) W(z) \left[ 1 - 3 U(z)^4 \right], \quad z \in [-\pi \varepsilon^2, \pi \varepsilon^2].
\end{equation}
Since $W'(\pi \varepsilon^2) = 0$ and $1 - 3 U^4(\pi \varepsilon^2) = 0$ in the case $\varphi''(a_0) = 0$,
the constraint (\ref{tech-constraint}) yields a contradiction. Therefore, for every $\varepsilon \in (0,\infty)$, it is
impossible to satisfy the boundary conditions (\ref{system-solution}) for nonzero $(\alpha,\beta,\gamma)$,
which implies that $z(\mathcal{L}) = 0$.
\end{proof}

By Lemma \ref{lemma-3-1}, we have $z(\mathcal{L}) = 0$ for every $\omega < 0$.
By Courant's Min-Max theory,
it follows from the variational characterization (\ref{infB}) with a single constraint that $n(\mathcal{L}) \leq 1$.
Moreover, it follows from the exact computation (\ref{negative-result}) that $n(\mathcal{L}) \geq 1$,
hence $n(\mathcal{L}) = 1$ for every $\omega < 0$.

The assertion of Theorem \ref{theorem-degeneracy} is proven.

\section{Mass $\mu$ versus frequency $\omega$ for the standing waves}
\label{sec-mass}

Since $z(\mathcal{L}) = 0$ by Lemma \ref{lemma-3-1}, the 
self-adjoint linearized operator $\mathcal{L} : H^2_{\rm NK}(\mathcal{T}) \subset L^2(\mathcal{T}) \mapsto L^2(\mathcal{T})$ is one-to-one.
Since $\sigma_{\rm ac}(\mathcal{L}) = [|\omega|,\infty)$ with $|\omega| > 0$ by the same Lemma \ref{lemma-3-1}, $0$ is bounded away from $\sigma(\mathcal{L})$, so that there exists a positive constant $C$ such that 
$$
\| \mathcal{L} u \|_{L^2(\mathcal{T})} \geq C \| u \|_{L^2(\mathcal{T})}
$$
for every $u \in H^2_{\rm NK}(\mathcal{T})$. Hence, $\mathcal{L}$ is onto and there exists a bounded inverse operator $\mathcal{L}^{-1} : H^2_{\rm NK}(\mathcal{T}) \subset L^2(\mathcal{T})  \mapsto H^2_{\rm NK}(\mathcal{T}) \subset L^2(\mathcal{T})$. By using standard arguments based on the implicit function theorem, it follows 
that the map $(-\infty,0) \ni \omega \mapsto \Phi(\cdot,\omega) \in H^2_{\rm NK}(\mathcal{T})$
is $C^1$. Therefore, the mass $\mu=\mu(\omega) := Q(\Phi(\cdot,\omega))$ is a $C^1$ function of the frequency $\omega$.
This yields the first assertion of Theorem \ref{theorem-persistence}.

Next, we consider the asymptotic limits of $\mu(\omega)$ as $\omega \to 0$ and $\omega \to -\infty$
in order to prove the property (\ref{mass-property-1}) in Theorem \ref{theorem-persistence}.
This will be done separately using two different asymptotic methods.

The limit $\omega \to 0$ is handled by using the power series expansions.

\begin{lemma}
\label{lemma-4-1}
For small $\omega < 0$, we have
\begin{equation}
\label{mass-small-omega}
\mu(\omega) = \mu_{\RE^+} + 20 \pi^3 |\omega|^{3/2} + \mathcal{O}(|\omega|^{5/2}) > \mu_{\RE^+}.
\end{equation}
Moreover, there exists $\omega_2 \in (-\infty,0)$ such that $\mu'(\omega) < 0$ for $\omega \in (\omega_2,0)$.
\end{lemma}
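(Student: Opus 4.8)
The plan is to convert the mass integral into phase-plane quadratures parametrized by $a$, expand these for small $a$, and re-express the result in powers of $|\omega|^{1/2}$. As $\omega\to 0$ we have $\varepsilon=|\omega|^{1/4}\to 0$, hence $a\to 0$ by the monotonicity of $a\mapsto\varepsilon(a)$ in Lemma \ref{lemma-2-1}; thus $U_0=\varphi(a)\to 1$, $U_+\to 1$ and $E\to 0$. Writing $\Phi=(u,v)$ and using the scaling (\ref{scaling}) with $\omega=-\varepsilon^4$ gives
\begin{equation*}
\mu(\omega)=\int_{-\pi\varepsilon^2}^{\pi\varepsilon^2}U^2\,dz+\int_0^\infty V^2\,dz .
\end{equation*}
Since $V(z)=\varphi(z+a)$ with $\varphi(z)={\rm sech}^{1/2}(2z)$, the half-line term equals $\int_a^\infty{\rm sech}(2s)\,ds=\mu_{\RE^+}-\int_0^a{\rm sech}(2s)\,ds$, which by ${\rm sech}(2s)=1-2s^2+\mathcal{O}(s^4)$ and $\mu_{\RE^+}=\pi/4$ expands as $\tfrac{\pi}{4}-a+\tfrac{2}{3}a^3+\mathcal{O}(a^5)$. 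For the circle term I would use the first integral (\ref{first}), $(U')^2=E+A(U)$ with $A(u)=u^2-u^6$ and $E=-\tfrac{3}{4}A(U_0)$ as in (\ref{a-to-energy}), to obtain $\int_{-\pi\varepsilon^2}^{\pi\varepsilon^2}U^2\,dz=2\int_{U_0}^{U_+}\frac{u^2\,du}{\sqrt{E+A(u)}}$.

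The key point for the circle integral is that, because $E=-A(U_+)$, one has $E+A(u)=A(u)-A(U_+)$, so the integrand is regular at $u=U_0$ and has only an integrable square-root singularity at $u=U_+$. Writing $U_0=1-\eta$ with $\eta=a^2+\mathcal{O}(a^4)$ (from $\varphi(a)={\rm sech}^{1/2}(2a)$) and $U_+=1-t_+$, the relation $A(U_+)=\tfrac{3}{4}A(U_0)$ together with $A(1-t)=4t-14t^2+20t^3-\mathcal{O}(t^4)$ gives $t_+=\tfrac{3}{4}\eta+\mathcal{O}(\eta^2)$. Rescaling the short interval by $t=t_++(\eta-t_+)s$, $s\in[0,1]$, so that the singularity at $s=0$ becomes explicit, and expanding the weight $u^2$ and the remaining smooth factor in powers of $\eta$, a direct computation gives
\begin{equation*}
2\int_{U_0}^{U_+}\frac{u^2\,du}{\sqrt{E+A(u)}}=\sqrt{\eta}\,\Bigl(1+\tfrac{29}{12}\eta+\mathcal{O}(\eta^2)\Bigr)=a+\tfrac{11}{6}a^3+\mathcal{O}(a^5).
\end{equation*}

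Adding the two contributions, the $\mathcal{O}(a)$ terms cancel, leaving $\mu=\mu_{\RE^+}+\tfrac{5}{2}a^3+\mathcal{O}(a^5)$. To pass to (\ref{mass-small-omega}) I would apply the identical rescaling to the quadrature (\ref{quadrature}), now without the $u^2$ weight, to get $\pi\varepsilon^2=\tfrac{1}{2}\sqrt{\eta}+\mathcal{O}(\eta^{3/2})$, whence $a=\sqrt{\eta}+\mathcal{O}(\eta^{3/2})=2\pi\varepsilon^2+\mathcal{O}(\varepsilon^6)=2\pi|\omega|^{1/2}+\mathcal{O}(|\omega|^{3/2})$. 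Because only this leading relation affects the $a^3$ term at the order tracked, substituting $a^3=8\pi^3|\omega|^{3/2}+\mathcal{O}(|\omega|^{5/2})$ yields $\mu=\mu_{\RE^+}+20\pi^3|\omega|^{3/2}+\mathcal{O}(|\omega|^{5/2})>\mu_{\RE^+}$.

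For the monotonicity I would work in the $a$-parametrization, where $\mu$ is a smooth function of $a$ with $\mu(a)-\mu_{\RE^+}=\tfrac{5}{2}a^3+\mathcal{O}(a^5)$, so $\frac{d\mu}{da}=\tfrac{15}{2}a^2+\mathcal{O}(a^4)>0$ for small $a>0$. By Lemma \ref{lemma-2-1} the map $a\mapsto\varepsilon(a)$ is $C^1$ and increasing, hence $\frac{da}{d\varepsilon}>0$, while $\omega=-\varepsilon^4$ gives $\frac{d\varepsilon}{d\omega}=-\tfrac{1}{4\varepsilon^3}<0$. The chain rule then yields $\mu'(\omega)=\frac{d\mu}{da}\frac{da}{d\varepsilon}\frac{d\varepsilon}{d\omega}<0$ for $\omega$ sufficiently close to $0$, producing the required $\omega_2\in(-\infty,0)$. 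The main obstacle is the bookkeeping in the circle integral: since the leading order cancels, the coefficient $\tfrac{5}{2}$ (hence $20\pi^3$) emerges only after carrying the expansions of $U_+$, of the weight $u^2$, and of the rescaled measure consistently to the order that survives the cancellation.
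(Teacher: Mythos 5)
Your computation is correct and reproduces both the coefficient $20\pi^3$ and the sign of $\mu'(\omega)$ near $\omega=0$, but it follows a route that differs in technique from the paper's. The paper expands the circle component $U(z)$ as a power series in $z$ (legitimate since $z\in[0,\pi\varepsilon^2]$ is short), imposes the two vertex conditions as an implicit system $F(a,U_+;\varepsilon)=0$, and invokes the Implicit Function Theorem to obtain $U_+=1-3\pi^2\varepsilon^4+\mathcal{O}(\varepsilon^8)$ and $a=2\pi\varepsilon^2-28\pi^3\varepsilon^6+\mathcal{O}(\varepsilon^{10})$, after which it integrates the series termwise; the IFT step also delivers the $C^\infty$ dependence on $\varepsilon$ needed to differentiate the expansion. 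You instead work entirely with the phase-plane quadratures (the representation the paper itself only deploys later, in the proof of Lemma \ref{lemma-mass-monotonicity}), use $a$ (equivalently $\eta=1-U_0$) as the expansion parameter, and resolve the square-root endpoint singularity at $U_+$ by the affine rescaling $t=t_++(\eta-t_+)s$. I have checked your intermediate constants: $t_+=\tfrac34\eta-\tfrac{21}{32}\eta^2+\cdots$ gives $\eta-t_+=\tfrac{\eta}{4}(1+\tfrac{21}{8}\eta+\cdots)$, and the weighted quadrature evaluates to $2\sqrt{\eta-t_+}\,(1+\tfrac32 t_+-\tfrac1{12}(\eta-t_+))+\mathcal{O}(\eta^{5/2})=\sqrt{\eta}\,(1+\tfrac{29}{12}\eta)+\mathcal{O}(\eta^{5/2})$, which with $\eta=a^2-\tfrac76a^4+\cdots$ gives $a+\tfrac{11}{6}a^3$; combined with $\tfrac{\pi}{4}-a+\tfrac23a^3$ for the tail this yields $\mu=\mu_{\RE^+}+\tfrac52a^3+\mathcal{O}(a^5)$, consistent with the paper's $\varepsilon$-expansions after substituting $a=2\pi\varepsilon^2+\mathcal{O}(\varepsilon^6)$. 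What your approach buys is that the cancellation of the $\mathcal{O}(a)$ terms between circle and half-line is structurally transparent and the smoothness in $a$ is read off directly from the rescaled integrals, at the cost of heavier endpoint bookkeeping; the paper's IFT formulation buys a cleaner justification of regularity and a uniform template reused in the $\omega\to-\infty$ limit. One small point worth making explicit in a final write-up: differentiating the asymptotic relation $\mu=\mu_{\RE^+}+\tfrac52a^3+\mathcal{O}(a^5)$ to conclude $d\mu/da>0$ requires knowing that $a\mapsto\mu$ is (at least) $C^1$ with this Taylor expansion, which your rescaled-quadrature representation does provide since the integrand $\int_0^1(\cdots)\,ds/\sqrt{s}$ depends smoothly on $\eta$; you should state this rather than leave it implicit.
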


\begin{proof}
The limit $\omega \to 0$ corresponds to the limit $\varepsilon \to 0$, for which
solutions of the boundary-value problem (\ref{sys1}) can be obtained by power series:
\begin{equation}
\label{power}
U(z) = U_+ \left[ 1 + \frac{1}{2} (1 - 3 U_+^4) z^2 + \frac{1}{24} (1 - 3 U_+^4) (1 - 15 U_+^4) z^4 + \mathcal{O}(z^6) \right],
\end{equation}
where $U_+ = U(0)$ is the same turning point as in the period function (\ref{period-function}). From the boundary conditions in (\ref{sys1}) we obtain
\begin{equation*}
\tanh(2a) = -\frac{\varphi'(a)}{\varphi(a)} = -\frac{2 U'(\pi \varepsilon^2)}{U(\pi \varepsilon^2)}
= 2 \pi \varepsilon^2 (3 U_+^4 - 1) \left[ 1 - \frac{1}{3} (1 + 3 U_+^4) \pi^2 \varepsilon^4 + \mathcal{O}(\varepsilon^8) \right]
\end{equation*}
and
\begin{equation*}
{\rm sech}(2a) = [U(\pm \pi \varepsilon^2)]^2 = U_+^2 \left[ 1 + (1 - 3 U_+^4) \pi^2 \varepsilon^4 +
\frac{1}{3} (1 - 3 U_+^4) (1 - 6 U_+^4) \pi^4 \varepsilon^8 + \mathcal{O}(\varepsilon^{12}) \right]
\end{equation*}
The two constraints can be written as the implicit 
equation $F(a,U_+;\varepsilon) = 0$ on the function $F(a,U_+;\varepsilon) : \mathbb{R}^2 \times \mathbb{R} \mapsto \mathbb{R}^2$. Thanks to the smoothness of $U, \varphi \in C^{\infty}$, we have $F \in C^{\infty}(\mathbb{R}^2 \times \mathbb{R})$. Moreover, 
$F(0,1;0) = 0$ and the Jacobian $D_{(a,U_+)} F(0,1;0)$ is invertible 
since $\det D_{(a,U_+)} F(0,1;0) = -4$.
By the Implicit Function Theorem for $C^{\infty}$ functions, 
for every small $\varepsilon$, there exists a unique solution 
$(a,U_+)$ of $F(a,U_+;\varepsilon) = 0$ near $(a,U_+) = (0,1)$; 
moreover, the dependence of $U_+$ and $a$ on $\varepsilon$ is $C^{\infty}$.
Solving the two nonlinear equations for $U_+$ and $a$ in terms of $\varepsilon$ with the power expansions yields the asymptotic solution:
\begin{equation}
\label{eq1a}
U_+ = 1 - 3 \pi^2 \varepsilon^4 + \mathcal{O}(\varepsilon^{8})
\end{equation}
and
\begin{equation}
\label{eq2a}
a = 2 \pi \varepsilon^2 - 28 \pi^3 \varepsilon^6 + \mathcal{O}(\varepsilon^{10}).
\end{equation}

We can now compute the mass $\mu(\omega)$ versus $\varepsilon$
as $\varepsilon \to 0$. We have
\begin{equation}
\label{mass-u-small}
\| u \|^2_{L^2(-\pi,\pi)} = 2 \int_0^{\pi \varepsilon^2} [U(z)]^2 dz =
2 \pi \varepsilon^2 - \frac{40}{3} \pi^3 \varepsilon^6 + \mathcal{O}(\varepsilon^{10})
\end{equation}
and
\begin{equation}
\label{mass-v-small}
\| v \|^2_{L^2(0,\infty)} = \int_0^{\infty} [V(z)]^2 dz = \arctan\left( e^{-2a} \right)
= \frac{\pi}{4} - 2 \pi \varepsilon^2  + \frac{100}{3} \pi^3 \varepsilon^6 + \mathcal{O}(\varepsilon^{10})
\end{equation}
so that
\begin{equation}
\label{mass-small}
\mu = \mu_{\RE^+} + 20 \pi^3 \varepsilon^6 + \mathcal{O}(\varepsilon^{10}).
\end{equation}
Since $\omega = -\varepsilon^4 < 0$, the asymptotic expansion (\ref{mass-small}) yields (\ref{mass-small-omega}).
The dependence of $\mu$ on $\varepsilon$ is $C^{\infty}$ and by the chain rule, we have
$\mu'(\omega) = -30 \pi^3 \varepsilon^2 + \mathcal{O}(\varepsilon^6)$ as $\varepsilon \to 0$.
This yields the assertion of the lemma.
\end{proof}

\begin{remark}
For the subcritical nonlinearities it was shown in \cite{[NPS15]}
that $\mu(\omega) \to 0$ as $\omega \to 0$ and $\mu'(\omega) < 0$ for small $|\omega|$. For the critical
nonlinearity, the leading order computation of $\mu'(\omega)$ was not conclusive as
$\omega \to 0$ in \cite{[NPS15]}. The power expansions above clarify this uncertainty and show
that $\mu'(\omega) < 0$ for small $|\omega|$.
\end{remark}

The limit $\omega \to -\infty$ is handled by using properties of elliptic functions.

\begin{lemma}
\label{lemma-5-1}
For large $\omega < 0$, we have
\begin{equation}
\label{mass-large-omega}
\mu(\omega) = \mu_{\RE} + \frac{8 \pi}{3}  |\omega|^{1/2} e^{-2\pi |\omega|^{1/2}} +
\mathcal{O}(e^{-2 \pi |\omega|^{1/2}}) > \mu_{\RE}.
\end{equation}
Moreover, there exists $\omega_1 \in (-\infty,\omega_2]$ such that $\mu'(\omega) > 0$ for $\omega \in (-\infty,\omega_1)$.
\end{lemma}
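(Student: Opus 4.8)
The plan is to parametrize the whole family by $a$ and study $a\to\infty$, which by Lemma~\ref{lemma-2-1} and the divergence $\lim_{U_0\to 0}T(U_0)=\infty$ proved there is exactly the regime $\varepsilon\to\infty$, i.e. $\omega=-\varepsilon^4\to-\infty$. First I would record the two exact integral identities that carry the computation. Since $U$ is monotonically decreasing on $[0,\pi\varepsilon^2]$, the first-order invariant (\ref{first}) gives $dz=-du/\sqrt{E+u^2-u^6}$ along the orbit, so the quadrature (\ref{quadrature}) reads
\begin{equation*}
\pi\varepsilon^2=\int_{U_0}^{U_+}\frac{du}{\sqrt{E+u^2-u^6}},
\end{equation*}
and, adding the half-line contribution $\int_0^\infty V^2\,dz=\arctan(e^{-2a})$, the mass is
\begin{equation*}
\mu=2\int_{U_0}^{U_+}\frac{u^2\,du}{\sqrt{E+u^2-u^6}}+\arctan\!\big(e^{-2a}\big),
\end{equation*}
where $U_0=\varphi(a)$, $E=-\tfrac34[\varphi'(a)]^2<0$ and $U_+$ is the largest root of $E+u^2-u^6=0$. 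At $E=0$ the circle integral collapses to $2\int_0^1 u\,du/\sqrt{1-u^4}=\pi/2=\mu_{\RE}$, so the asserted limit is the value at the separatrix and (\ref{mass-large-omega}) records the first correction.

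The heart of the matter is the asymptotics of these elliptic integrals as $E\to 0^-$. Substituting $t=u^2$ reduces both to integrals governed by the cubic $t^3-t-E$, whose roots approach $\{0,\pm1\}$; equivalently one writes $U$ explicitly through Jacobi elliptic functions and lets the modulus $k\to 1$, where $K(k)\sim\ln(4/k')$ produces the logarithmic growth. The decisive point I would exploit is that near $u=0$ the excess of the integrand over its $E=0$ form behaves like $\tfrac{\eta}{2u}$ with $\eta:=|E|$, so its contribution is of size $\eta\ln(1/\eta)$; since $\eta\sim\tfrac32 e^{-2a}$ while $U_0=\varphi(a)\sim\sqrt2\,e^{-a}$ gives $\ln(1/U_0)\sim a$, this yields a correction of order $a\,e^{-2a}$, in contrast to the purely $\mathcal{O}(e^{-2a})$ term contributed by $\arctan(e^{-2a})$. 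In parallel I would extract from the quadrature the relation $\pi\varepsilon^2=a+c+o(1)$ for an explicit constant $c$, again from the $k\to1$ asymptotics; this is precisely what converts $a\,e^{-2a}$ into $\varepsilon^2 e^{-2\pi\varepsilon^2}=|\omega|^{1/2}e^{-2\pi|\omega|^{1/2}}$ and fixes the exponent $2\pi|\omega|^{1/2}$. Carrying all constants through the incomplete elliptic integral for $\mu$ and the complete one for $\pi\varepsilon^2$ — the detailed estimates are deferred to Appendix~\ref{app-asymptotics} — delivers (\ref{mass-large-omega}); the coefficient of the $a\,e^{-2a}$ term is positive, which gives the strict inequality $\mu(\omega)>\mu_{\RE}$.

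For the monotonicity claim I would not differentiate the asymptotic expansion termwise but instead use that $\mu$, $\varepsilon$, and hence $\omega=-\varepsilon^4$, are all $C^1$ functions of $a$ (Lemma~\ref{lemma-2-1} together with the $C^1$ dependence of $\Phi$ on $\omega$ established at the start of this section), so that $\mu'(\omega)=(d\mu/da)/(d\omega/da)$. Here $d\omega/da<0$ because $\varepsilon$ is increasing in $a$, while $d\mu/da$ inherits the sign of $\tfrac{d}{da}(a\,e^{-2a})=(1-2a)e^{-2a}<0$ for large $a$; both derivatives are computed from the same elliptic-function asymptotics by expanding the integrands and differentiating under the integral sign. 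Hence $\mu'(\omega)>0$ for every $\omega$ below some threshold $\bar\omega$, and taking $\omega_1:=\min(\bar\omega,\omega_2)\in(-\infty,\omega_2]$ gives $\mu'(\omega)>0$ on all of $(-\infty,\omega_1)$, as claimed.

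The step I expect to be the real obstacle is purely computational: extracting the exact prefactor $\tfrac{8\pi}{3}$ and the exact exponent requires tracking every constant in the $k\to1$ (separatrix) limit, including the constant $c$ in $\pi\varepsilon^2=a+c+o(1)$ and the precise coefficient in $U_0^2=\tfrac43|E|+\cdots$, since an error in any of these corrupts both the coefficient and, through the link between $a$ and $\varepsilon^2$, the exponent itself. A secondary point is justifying differentiation in $a$ of the $a\to\infty$ asymptotics needed for $d\mu/da$ and $d\omega/da$; I would handle this by differentiating the smooth integrands under the integral sign, which is legitimate away from the endpoints, and by controlling the integrable endpoint singularities separately.
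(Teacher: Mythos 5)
Your proposal is correct in strategy and, carried out carefully, reproduces the paper's result, but it takes a genuinely different computational route. The paper writes the circle component explicitly in Jacobi elliptic functions \eqref{exact-solution}, determines $k(\varepsilon)$ from the vertex matching via the hyperbolic-limit expansions \eqref{expansion-dn}--\eqref{expansion-sn} (justified in \cite{MarPel}), obtains $a=\pi\varepsilon^2+\log(3/4)+\cdots$ from the resulting implicit equation \eqref{impl-eq-k}, and evaluates $\int_0^{\pi\varepsilon^2}\rho\,dz$ from the ${\rm dn}^2$ representation in Appendix \ref{app-asymptotics}. You instead work directly with the Abelian integrals $T(U_0)$ and $B(U_0)$ coming from the quadrature, extracting the $a e^{-2a}$ correction from the logarithmic near-origin behaviour of the integrand and the relation $\pi\varepsilon^2=a+c+o(1)$ from the logarithmic divergence of the period integral; this is self-contained (no elliptic-function machinery) and, as a sanity check, $2\cdot\tfrac{|E|}{2}\ln(1/U_0)\sim\tfrac{3}{4}U_0^2\,a\sim\tfrac{3}{2}ae^{-2a}$ converts, via $e^{-2a}=\tfrac{16}{9}e^{-2\pi\varepsilon^2}$, into exactly the paper's coefficient $\tfrac{8\pi}{3}\varepsilon^2e^{-2\pi\varepsilon^2}$. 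Two points deserve more care than your sketch gives them. First, your ``excess $\sim \eta/(2u)$'' expansion is not uniform down to $u=U_0$, since the lower turning point $\sqrt{\rho_2}=\tfrac{\sqrt3}{2}U_0$ is at distance $O(U_0)$ from $U_0$; the region $u=O(U_0)$ must be estimated separately (it contributes $O(e^{-2a})$, which is harmless). Second, and less obvious, the upper endpoint: $\int_{U_+}^1 f_0\,du$ and the tail of $\int^{U_+}(f_E-f_0)\,du$ are each of size $O(\sqrt{1-\rho_1})=O(e^{-a})$ --- much larger than the claimed correction --- and only their cancellation leaves an $O(e^{-2a})$ remainder; a decomposition that treats only the lower endpoint would miss this and produce a spurious $O(e^{-a})$ term. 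Finally, note that Appendix \ref{app-asymptotics} as written carries out the elliptic-function computation, not yours, so deferring the constants there does not actually serve your route; and your justification of differentiating the asymptotics in $a$ is at the same (somewhat terse) level as the paper's appeal to $C^\infty$ dependence from the implicit function theorem, with the chain rule through $a$ being equivalent to the paper's chain rule through $\varepsilon$.
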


\begin{proof}
First, let us derive an exact solution of the quadrature (\ref{first})
with $E < 0$ given by (\ref{a-to-energy}). By using the variable $\rho := U^2$,
the first-order invariant (\ref{first}) is rewritten in the equivalent form:
\begin{equation}
\label{first-equivalent}
\frac{1}{4} (\rho')^2 = g_E(\rho) := E \rho + \rho^2 - \rho^4 = \rho (\rho_1 - \rho) (\rho_2 - \rho) (\rho_3 - \rho),
\end{equation}
where the nonzero roots $\rho_1$, $\rho_2$, and $\rho_3$ satisfy the constraints
\begin{equation}
\label{roots-rho}
\left\{ \begin{array}{l} \rho_1 + \rho_2 + \rho_3 = 0, \\
\rho_1 \rho_2 + \rho_1 \rho_3 + \rho_2 \rho_3 = -1, \\
\rho_1 \rho_2 \rho_3 = E. \end{array} \right.
\end{equation}
Since $g_E(0) = 0$ and $g_E'(0) = E < 0$, one root (say $\rho_3)$ is negative and the other two roots ($\rho_1$ and $\rho_2$) are either real and positive or complex-conjugate. Admissible solutions for $\rho = U^2 > 0$ 
exist only if the roots $\rho_1$ and $\rho_2$ are real and positive, 
so that we can order them as 
\[
\rho_3 < 0 < \rho_2 < \rho_1.
\]
Solving (\ref{roots-rho}) for $\rho_{1,2}$ and $E$ in terms of $\rho_3$ yields
\begin{equation}
\label{rho-1-2-3}
\rho_{1,2} = \frac{1}{2} |\rho_3| \pm \sqrt{1 - \frac{3}{4} \rho_3^2}, \quad
|E| = |\rho_3| (\rho_3^2 - 1).
\end{equation}
from which it follows that the roots $\rho_1$ and $\rho_2$ are real and positive if $|\rho_3| \in (1,\frac{2}{\sqrt{3}})$ which corresponds to $|E| \in (0,\frac{2}{3 \sqrt{3}})$.
It follows from (\ref{a-to-energy}) that $-\frac{2}{3 \sqrt{3}} < E < 0$ so that the roots $\rho_1$ and $\rho_2$ are real and positive for every $a \in (0,\infty)$.

Let us now write the explicit solution to the quadrature (\ref{first-equivalent}) in Jacobian elliptic functions ${\rm sn}$, ${\rm cn}$, and ${\rm dn}$ (see \cite{AS} for review). These elliptic functions are derived from 
the inversion of the elliptic integral of the first kind,
\begin{equation*}
x = F(\tau;k) = \int_{0}^{\tau} \frac{dt}{\sqrt{1-k^2\sin^2t}},
\end{equation*}
where $k\in(0,1)$ is the elliptic modulus. The complete elliptic integral 
is defined as $K(k) = F(\frac{\pi}{2};k)$. The first two Jacobi elliptic functions 
are defined by $\textrm{sn}(x;k) = \sin \tau$ and $\textrm{cn}(x;k) = \cos \tau$ such that  
\begin{equation}
\textrm{sn}^2(x;k) + \textrm{cn}^2(x;k)  = 1. 
\label{Jid1}
\end{equation}  
These functions  are smooth, sign-indefinite, and periodic with the period $4K(k)$. The third Jacobi elliptic function is defined from the quadratic formula  
\begin{equation}
\textrm{dn}^2(x;k) + k^2\textrm{sn}^2(x;k)  = 1. 
\label{Jid2}
\end{equation} 
The function ${\rm dn}(x;k)$ is given by the positive square root of (\ref{Jid2}), so that it is smooth, positive, and periodic with the period $2 K(k)$. The Jacobi elliptic functions are related by the derivatives:
\begin{equation} 
\label{der-Jac}
\left\{ \begin{array}{l}
\frac{d}{dx} \ \textrm{sn}(x;k) = \textrm{cn}(x;k) \ \textrm{dn}(x;k),\\
\frac{d}{dx} \ \textrm{cn}(x;k) = -\textrm{sn}(x;k) \ \textrm{dn}(x;k), \\ 
\frac{d}{dx} \ \textrm{dn}(x;k) = -k^2 \textrm{sn}(x;k) \ \textrm{cn}(x;k).
\end{array} \right.
\end{equation} 

As is well-known, see, e.g., \cite{ChenPel} for computational details,
the exact solution to the quadrature (\ref{first-equivalent}) can be written in the form:
\begin{eqnarray}
\label{exact-solution}
\rho(z) = \rho_3 + \frac{(\rho_1 - \rho_3) (\rho_2 - \rho_3)}{(\rho_2 - \rho_3) + (\rho_1 - \rho_2) {\rm sn}^2(\nu z;k)}
= \frac{\rho_1 |\rho_3| {\rm dn}^2(\nu z;k)}{\rho_1 + |\rho_3| - \rho_1 {\rm dn}^2(\nu z;k)},
\end{eqnarray}
where
$$
\nu = \sqrt{\rho_1 ( \rho_2 - \rho_3)}, \quad k = \frac{\sqrt{|\rho_3| (\rho_1 - \rho_2)}}{\sqrt{\rho_1 ( \rho_2 - \rho_3)}}.
$$
The solution exists in $[\rho_2,\rho_1]$, hence $\rho(z) > 0$ for every $z$. In addition, $\rho(0) = \rho_1$ and $\rho(L/2) = \rho_2$,
where $L = 2 K(k)/\nu$ is the period of the exact periodic solution.

We shall now explore the asymptotic limit $\varepsilon \to \infty$, which corresponds to
the limit $a \to \infty$. It follows from
(\ref{a-to-energy}) in the limit $a \to \infty$ that
\[
E = -\frac{3}{2} e^{-2a} + \mathcal{O}(e^{-6a}).
\]
By solving the cubic equation for $|\rho_3|$ in (\ref{rho-1-2-3})
and using the explicit expressions for $\rho_{1,2}$, we obtain
in the same limit:
\begin{equation}
\label{expansions-rho}
\left\{
\begin{array}{l}
\rho_1 = 1 - \frac{3}{4} e^{-2a} + \mathcal{O}(e^{-4a}), \\
\rho_2 = \frac{3}{2} e^{-2a} + \mathcal{O}(e^{-4a}), \\
|\rho_3| = 1 + \frac{3}{4} e^{-2a} + \mathcal{O}(e^{-4a}),
\end{array} \right.
\end{equation}
from which we obtain
\begin{equation}
\label{expansion-k}
\nu = 1 + \frac{3}{4} e^{-2a} + \mathcal{O}(e^{-4a}), \quad k  = 1 - \frac{3}{2} e^{-2a} + \mathcal{O}(e^{-4a}).
\end{equation}
Approximations of elliptic functions in terms of hyperbolic functions (see 16.15 in \cite{AS}) were justified in
Proposition 4.6 and Appendix A in \cite{MarPel}. In the limit $k \to 1$ and $x \to \infty$ such that
$|e^{x} (1-k)| \leq C e^{-x}$ for a given $(k,x)$-independent positive constant $C$, the elliptic functions satisfy the expansions:
\begin{eqnarray}
\label{expansion-dn}
{\rm dn}(x;k) & = & 2 e^{-x} + \frac{1}{4} e^{x} (1 - k) + \mathcal{O}((1-k) |x| e^{-x}), \\
\label{expansion-cn}
{\rm cn}(x;k) & = & 2 e^{-x} - \frac{1}{4} e^{x} (1 - k) + \mathcal{O}((1-k) |x| e^{-x}),
\end{eqnarray}
and
\begin{eqnarray}
\label{expansion-sn}
{\rm sn}(x;k) = 1 + \mathcal{O}(1-k).
\end{eqnarray}
It follows from (\ref{exact-solution}), (\ref{expansions-rho}), (\ref{expansion-k}), and (\ref{expansion-dn}) that
$$
U(\pi \varepsilon^2) = \sqrt{\rho(\pi \varepsilon^2)} = \frac{1}{\sqrt{2}} \left[ 2 e^{-\pi \varepsilon^2}
+ \frac{1}{4} (1-k) e^{\pi \varepsilon^2} + \mathcal{O}((1-k) \varepsilon^2 e^{-\pi \varepsilon^2}) \right],
$$
as long as $1 - k = \mathcal{O}(e^{-2\pi \varepsilon^2})$ and $a = \mathcal{O}(\varepsilon^2)$ as $\varepsilon \to \infty$. It follows from 
(\ref{der-Jac}), (\ref{exact-solution}), (\ref{expansions-rho}),
(\ref{expansion-k}), (\ref{expansion-cn}), and (\ref{expansion-sn}) that
$$
U'(\pi \varepsilon^2) = -\frac{1}{\sqrt{2}} \left[ 2 e^{-\pi \varepsilon^2}
- \frac{1}{4} (1-k) e^{\pi \varepsilon^2} + \mathcal{O}((1-k) \varepsilon^2 e^{-\pi \varepsilon^2}) \right].
$$
Since the boundary conditions in (\ref{sys1}) yield
$$
2U'(\pi \varepsilon^2) = -\tanh(2a) U(\pi \varepsilon^2),
$$
we obtain the following implicit equation for $k$:
$$
2 \left[ 2 e^{-\pi \varepsilon^2}
- \frac{1}{4} (1-k) e^{\pi \varepsilon^2} + \mathcal{O}((1-k) \varepsilon^2 e^{-\pi \varepsilon^2}) \right] =
\left[ 2 e^{-\pi \varepsilon^2}
+ \frac{1}{4} (1-k) e^{\pi \varepsilon^2} + \mathcal{O}((1-k) \varepsilon^2 e^{-\pi \varepsilon^2}) \right],
$$
as long as $1 - k = \mathcal{O}(e^{-2\pi \varepsilon^2})$ and $a = \mathcal{O}(\varepsilon^2)$
as $\varepsilon \to \infty$. 
After multiplying by $e^{-\pi \varepsilon^2}$ and simplifying similar terms, 
we rewrite the implicit equation in the form:
\begin{equation}
\label{impl-eq-k}
\frac{3}{4} (1-k) - 2 e^{-2\pi \varepsilon^2} + \mathcal{O}((1-k) \varepsilon^2 e^{-2 \pi \varepsilon^2}) = 0.
\end{equation}
Thanks to the smoothness of $U, \varphi \in C^{\infty}$, coefficients of this implicit equation are $C^{\infty}$ in $k$ and $\varepsilon$. There exists a root $k = 1$ as $\varepsilon \to \infty$; moreover, the root is simple. 
By the Implicit Function Theorem for $C^{\infty}$ functions, 
for every large $\varepsilon$, there exists a unique solution 
$k$ of the implicit equation (\ref{impl-eq-k})  near $k = 1$; 
moreover, the dependence of $k$ on $\varepsilon$ is $C^{\infty}$.
The asymptotic expansion of the simple root of $k$ is given by 
\begin{equation}
\label{k-unique}
k = 1 - \frac{8}{3} e^{-2\pi \varepsilon^2} + \mathcal{O}(\varepsilon^2 e^{-4 \pi \varepsilon^2}).
\end{equation}
In combination with the expansion for $k$ in (\ref{expansion-k}), this yields the unique asymptotic
balance at
\begin{equation}
\label{choices-unique}
e^{-2a} = \frac{16}{9} e^{-2\pi\varepsilon^2} + \mathcal{O}(\varepsilon^2 e^{-4 \pi \varepsilon^2}),
\end{equation}
or equivalently,
\begin{equation}
\label{a-unique-root}
a = \pi \varepsilon^2 + \log(\frac{3}{4}) + \mathcal{O}(\varepsilon^2 e^{-2\pi \varepsilon^2}).
\end{equation}
The dependence of $a$ on $\varepsilon$ is $C^{\infty}$. 
This completes the asymptotic construction of the solution (\ref{exact-solution})
in the limit $\varepsilon \to \infty$. 

We can now compute the mass $\mu(\omega)$ given by (\ref{mass}) versus $\varepsilon$ as $\varepsilon \to \infty$.
As it is explained in Appendix \ref{app-asymptotics}, we obtain
\begin{eqnarray}
\label{int-mass}
\| u \|^2_{L^2(-\pi,\pi)} = 2 \int_0^{\pi \varepsilon^2} \rho(z) dz
= \frac{\pi}{2} + \frac{8 \pi}{3}  \varepsilon^2 e^{-2\pi \varepsilon^2} + \mathcal{O}(e^{-2\pi \varepsilon^2}).
\end{eqnarray}
On the other hand, thanks to the asymptotic balance (\ref{choices-unique}) we have
\begin{equation*}
\| v \|^2_{L^2(0,\infty)} = \arctan\left( e^{-2a} \right) = \mathcal{O}(e^{-2 \pi \varepsilon^2}),
\end{equation*}
so that
\begin{equation}
\label{mass-large}
\mu = \mu_{\RE} + \frac{8 \pi}{3}  \varepsilon^2 e^{-2\pi \varepsilon^2} + \mathcal{O}(e^{-2 \pi \varepsilon^2}).
\end{equation}
Since $\omega = -\varepsilon^4 < 0$, the asymptotic expansion (\ref{mass-large}) yields (\ref{mass-large-omega}).
The dependence of $\mu$ on $\varepsilon$ is $C^{\infty}$ and by the chain rule, we have
$\mu'(\omega) = \frac{8 \pi^2}{3} e^{-2\pi \varepsilon^2} + \mathcal{O}(\varepsilon^{-2} e^{-2\pi \varepsilon^2})$
as $\varepsilon \to \infty$. This yields the assertion of the lemma.
\end{proof}

\begin{figure}[h!]
	\centering
	\includegraphics[width=7.5cm,height=7cm]{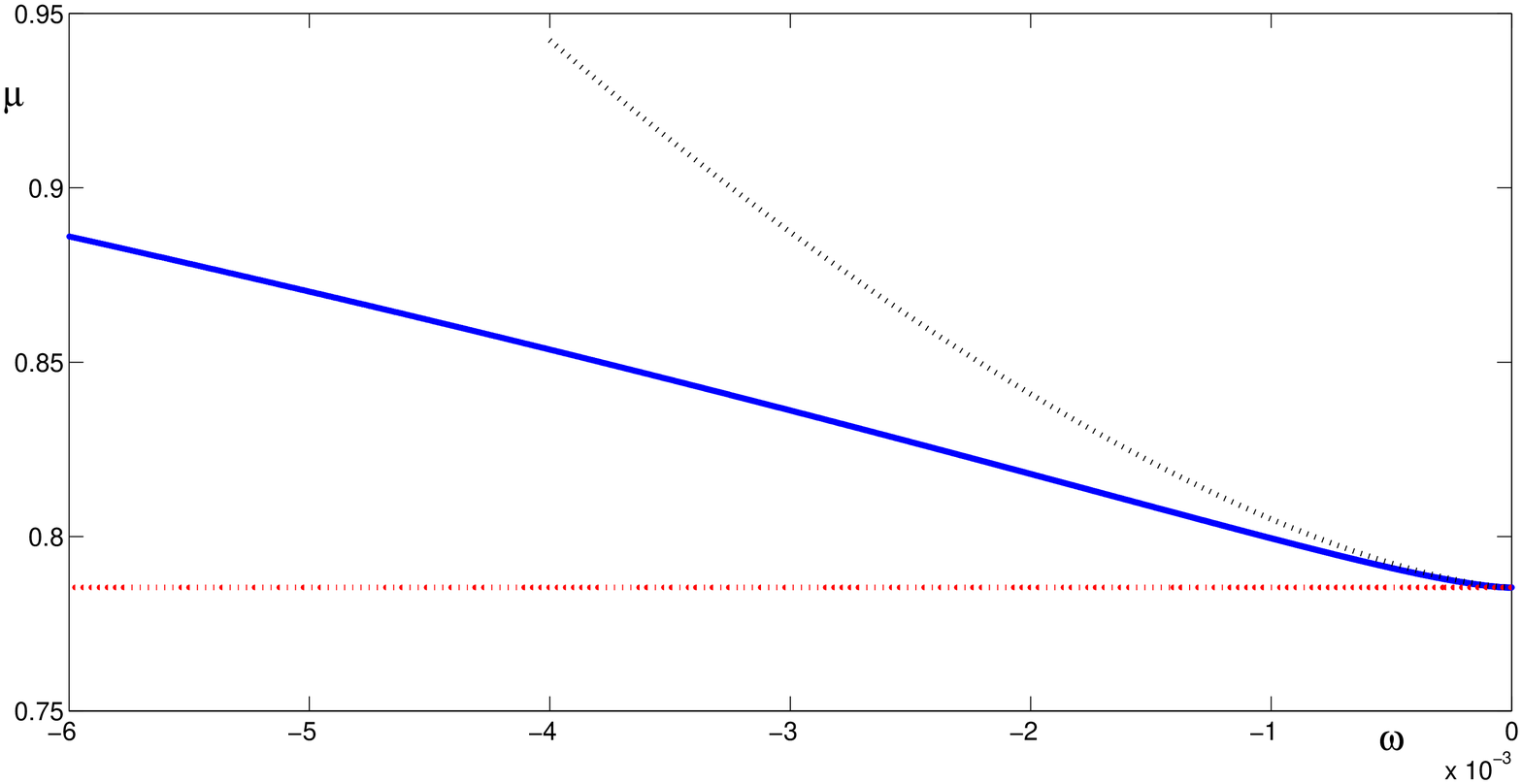}
	\includegraphics[width=7.5cm,height=7cm]{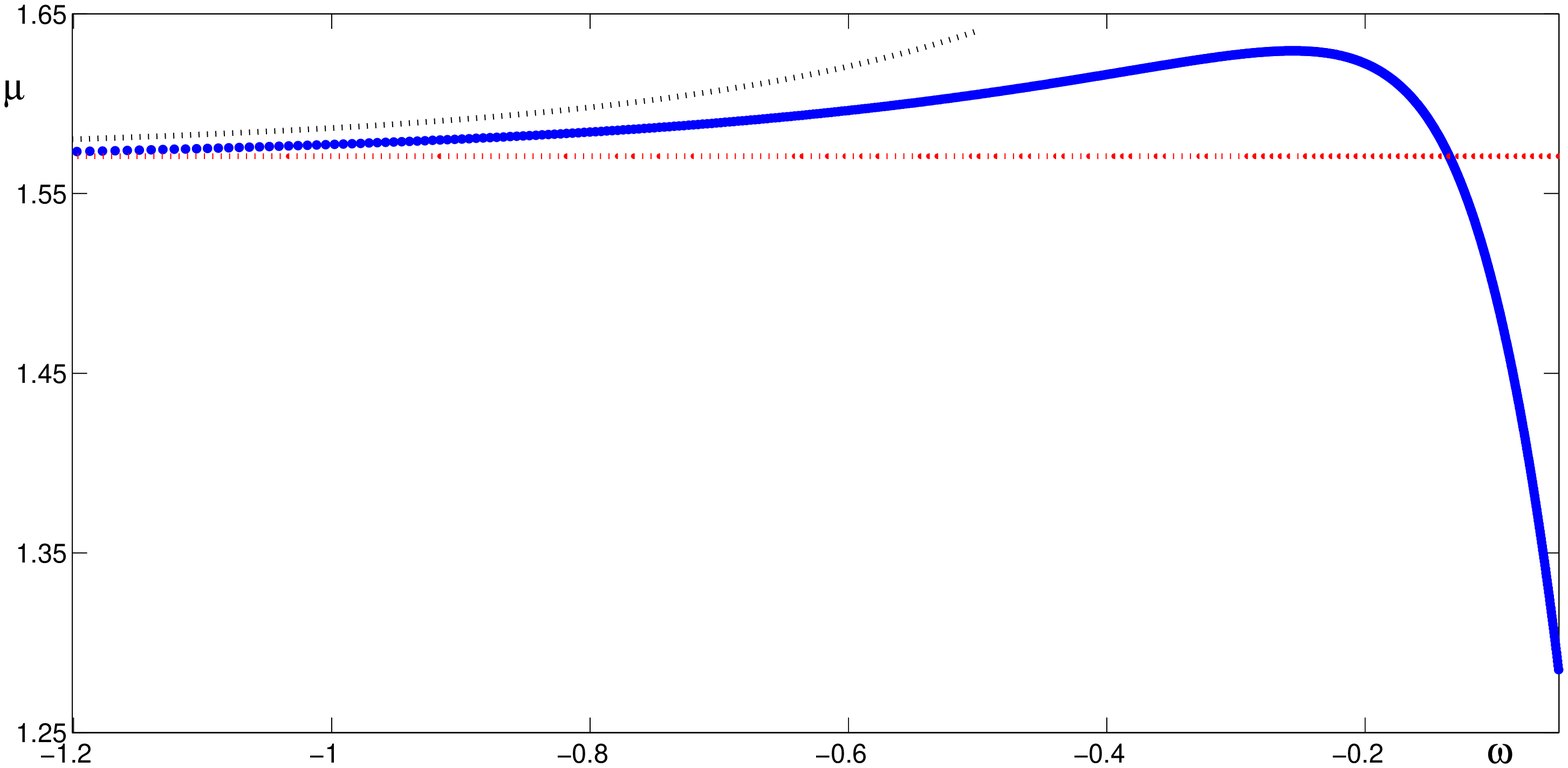}
	\caption{Asymptotics of mass $\mu$ versus $\omega$ for $\omega \to 0$ (left) and $\omega \to -\infty$ (right).
Dashed lines show the levels (\ref{mass-half-soliton}) and (\ref{mass-soliton}), whereas the solid lines
show the asymptotic expressions (\ref{mass-small-omega}) and (\ref{mass-large-omega}).}
	\label{fig-mass-small-large}
\end{figure}

Let us illustrate numerically the implicit solution defined by the quadrature (\ref{quadrature}).
For each fixed $U_0 \in (0,1)$, we find $U_+ \in [U_*,1)$, where $U_* := \frac{1}{3^{1/4}}$,
from numerical solution of $E + U^2 - U^6 = 0$
with $E$ given by (\ref{a-to-energy}). Then, we integrate the quadrature (\ref{quadrature})
numerically, hence obtaining a unique value of $\varepsilon^2 = |\omega|^{1/2}$
for each $U_0$. Then, we compute the mass $\mu$ from the following integral:
\begin{equation}
\label{mass-representation}
\mu = 2 \int_{U_0}^{U_+} \frac{u^2 du}{\sqrt{E + u^2 - u^6}} + {\rm arctan}(e^{-2a}),
\end{equation}
where $a \in (0,\infty)$ is expressed from $U_0 \in (0,1)$ by the explicit formula:
\begin{equation}
\label{a-representation}
e^{2a} = \frac{1 + \sqrt{1 - U_0^4}}{U_0^2}.
\end{equation}
By using the numerical integration above, we have obtained the mapping $\omega \mapsto \mu(\omega)$,
which is plotted on Figure \ref{fig-mass}. Figure \ref{fig-mass-small-large} shows
the asymptotic dependencies (\ref{mass-small-omega}) and (\ref{mass-large-omega}) by solid lines
superposed together with the numerical data for $\mu(\omega)$ by black dots.
The levels (\ref{mass-half-soliton}) and (\ref{mass-soliton}) are shown by dotted lines.

Finally, we prove the monotonicity of the mapping $\omega \mapsto \mu(\omega)$ given by the property (\ref{mass-property-2})
in Theorem \ref{theorem-persistence}. The following lemma gives the result.

\begin{lemma}
\label{lemma-mass-monotonicity}
There exists a unique $\omega_1 \in (-\infty,0)$ such that
$\mu'(\omega) > 0$ for $\omega \in (-\infty,\omega_1)$ and $\mu'(\omega) < 0$ for $\omega \in (\omega_1,0)$.
\end{lemma}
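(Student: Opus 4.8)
The plan is to carry the whole analysis over to the parametrization by the turning point $U_0\in(0,1)$ coming from the quadrature (\ref{quadrature}) and the mass representation (\ref{mass-representation}). Since $\pi\varepsilon^2 = T(U_0)$ and $\omega=-\varepsilon^4=-\pi^{-2}T(U_0)^2$, the properties $T(U_0)>0$ and $T'(U_0)<0$ established in Lemma \ref{lemma-2-1} give $\frac{d\omega}{dU_0}=-\frac{2}{\pi^2}T(U_0)T'(U_0)>0$, so that $\omega$ is a strictly increasing $C^1$ bijection of $(0,1)$ onto $(-\infty,0)$. Hence $\operatorname{sign}\mu'(\omega)=\operatorname{sign}\frac{d\mu}{dU_0}$, and it suffices to prove that $U_0\mapsto\frac{d\mu}{dU_0}$ has a \emph{unique} zero on $(0,1)$, positive to its left and negative to its right. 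I would write $\mu(U_0)=I(U_0)+J(U_0)$, where $I(U_0):=2\int_{U_0}^{U_+}u^2(E+u^2-u^6)^{-1/2}\,du$ is the circle contribution and $J(U_0):=\arctan(e^{-2a})$ is the half-line contribution, with $E=-\tfrac34(U_0^2-U_0^6)$, $U_+$ the largest root of $E+u^2-u^6=0$, and $a=a(U_0)$ the explicit function in (\ref{a-representation}). For existence of a crossing, note that Lemma \ref{lemma-4-1} gives $\frac{d\mu}{dU_0}<0$ for $U_0$ near $1$ (i.e. $\omega\to0^-$) and Lemma \ref{lemma-5-1} gives $\frac{d\mu}{dU_0}>0$ for $U_0$ near $0$ (i.e. $\omega\to-\infty$), so the intermediate value theorem yields at least one $U_0^\ast\in(0,1)$ with $\frac{d\mu}{dU_0}(U_0^\ast)=0$.

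The work lies in uniqueness. The half-line term is elementary: $J$ is strictly increasing with an explicit $J'(U_0)>0$. The delicate object is $I'(U_0)$, which I would compute by the invariant-differentiation device already used for the period function in Lemma \ref{lemma-2-1}. Setting $p:=(E+A(u))^{1/2}$ with $A(u)=u^2-u^6$ and using $\frac{dE}{dU_0}=-\tfrac34 A'(U_0)$, the integrand derivative $\frac{\partial}{\partial U_0}(u^2/p)$ carries a $p^{-3}$ singularity at the turning point $U_+$ that is not integrable on its own; an integration by parts against $\frac{d}{du}p^{-1}=-A'(u)/(2p^3)$, exactly as in the computation leading to (\ref{derivative-T}), cancels it against the endpoint variation of $U_+$ and represents $I'(U_0)$ as a manifestly non-singular integral plus boundary data evaluated at $u=U_0$, where $p=\tfrac12\sqrt{A(U_0)}\neq0$. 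Granting this, the cleanest way to upgrade the single crossing from ``at least one'' to ``exactly one'' is to show that every zero of $\frac{d\mu}{dU_0}$ is a strict local maximum of $U_0\mapsto\mu(U_0)$, i.e. $\frac{d^2\mu}{dU_0^2}<0$ there; since $\frac{d\omega}{dU_0}>0$ and $\frac{d\mu}{dU_0}=0$ at such a point, this is equivalent to $\mu''(\omega)<0$ at every critical point of $\omega\mapsto\mu(\omega)$. Two distinct strict maxima would force an interior local minimum between them, hence a critical point that is not a maximum, contradicting the claim; so at most one zero can occur. (The regularity $\mu\in C^2$ needed here follows from the smoothness of the $U_0$-parametrization and of the regularized integral, just as Lemma \ref{lemma-2-1} produced a $C^1$ period function.)

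Combining existence with this uniqueness gives a single $U_0^\ast$, hence a single $\omega_1:=\omega(U_0^\ast)$; the endpoint signs then fix the pattern $\mu'(\omega)>0$ on $(-\infty,\omega_1)$ and $\mu'(\omega)<0$ on $(\omega_1,0)$ and identify this $\omega_1$ with the critical frequency appearing in Lemmas \ref{lemma-4-1}--\ref{lemma-5-1} (in particular $\omega_1\le\omega_2$). I expect the genuine obstacle to be exactly the sign-definiteness at critical points: after the integration by parts, establishing $\frac{d^2\mu}{dU_0^2}<0$ (or, alternatively, the strict single sign change of $I'+J'$ directly) should reduce to an algebraic inequality in $U_0$ whose integrand changes character across $U_0=U_*=3^{-1/4}$, where $A'(U_0)$ vanishes. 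As in Lemma \ref{lemma-2-1}, I anticipate having to treat the ranges $U_0\in(0,U_*)$ and $U_0\in(U_*,1)$ separately and to check the resulting rational inequalities by the same kind of factorization used in (\ref{derivative-T-term}) and the displays following it.
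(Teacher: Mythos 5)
Your setup is sound and matches the paper's: you pass to the parametrization by the turning point $U_0$, observe that $\omega$ is an increasing function of $U_0$ so that $\operatorname{sign}\mu'(\omega)=\operatorname{sign}\frac{d\mu}{dU_0}$, split $\mu=I+J$ into the circle and half-line contributions exactly as in (\ref{mass-representation})--(\ref{a-representation}), and get existence of at least one critical point from the endpoint signs supplied by Lemmas \ref{lemma-4-1} and \ref{lemma-5-1}. The gap is that uniqueness --- which is the entire content of the lemma --- is only planned, not proved. Your proposed mechanism is to show $\frac{d^2\mu}{dU_0^2}<0$ at every critical point and you explicitly defer the verification to ``an algebraic inequality'' to be checked later; but that inequality is precisely where all the work lives, and your sketch gives no evidence that the second-derivative route terminates in a sign-definite expression. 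Concretely: after the regularizing integration by parts, $\frac{d\mu}{dU_0}$ has the form $[E+A(U_*)]\mu'(U_0)=\frac{1}{2\sqrt3}A'(U_0)\left[F(U_0)-G(U_0)\right]$ with $F$ an integral over $[U_0,U_+]$ and $G$ explicit (this is (\ref{derivative-mass})--(\ref{root-F-G}) in the paper). Differentiating once more and evaluating at a critical point, where $F=G$, leaves you with the sign of $A'(U_0)\,(F'-G')$; so your route does not bypass the hard step, it reduces to proving $F'(U_0)<0$ and $G'(U_0)>0$ --- the very inequalities the paper establishes by a second integration by parts and an explicit factorization, and which your proposal does not attempt. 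In addition, the second-derivative test needs $\mu\in C^2$ (equivalently, differentiability of the regularized integral $F$), which you assert in passing but which again amounts to carrying out the computation of $F'$.

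The paper avoids second derivatives altogether: on $(0,U_*)$, where $A'(U_0)>0$, the equation $\mu'(U_0)=0$ is equivalent to $F(U_0)=G(U_0)$, and since $F$ is proved strictly decreasing from $+\infty$ to a negative value while $G$ is strictly increasing from $0$ to $+\infty$, there is exactly one crossing; on $[U_*,1)$ a separate direct estimate gives $\mu'(U_0)<0$, so no critical points occur there. Note that this last range cannot be absorbed into the $F=G$ reformulation (one divides by $A'(U_0)$, which vanishes at $U_*$ and changes sign), so even in your scheme the interval $[U_*,1)$ would need the separate sign argument that your proposal does not carry out. Until you supply the monotonicity of $F$ and $G$ (or an equivalent sign-definite expression for $\mu''$ at critical points) on $(0,U_*)$, together with the direct negativity of $\mu'$ on $[U_*,1)$, the proof is incomplete.
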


\begin{proof}
We recall the representation of the mass $\mu(\omega)$ in the form (\ref{mass-representation}),
where $U_0 \in (0,1)$ is the only parameter, whereas $a$ is given by (\ref{a-representation})
and $\omega = -\varepsilon^4$ is uniquely determined from $\pi \varepsilon^2 = T(U_0)$
by the period function (\ref{period-function}). By Lemma \ref{lemma-2-1}, the
map $(0,\infty) \ni a \mapsto \varepsilon(a) \in (0,\infty)$ is monotonically increasing. Since the maps
$(0,\infty) \ni \varepsilon \mapsto \omega(\varepsilon) \in (-\infty,0)$
and $(0,1) \ni U_0 \mapsto a(U_0) \in (0,\infty)$ are monotonically decreasing, monotonicity of the map
$(-\infty,0) \ni \omega \mapsto \mu(\omega) \in (0,\infty)$
is identical to the monotonicity of the map $(0,1) \ni U_0 \mapsto \mu(U_0) \in (0,\infty)$.
Let us define
\begin{equation}
\label{mass-representation2}
B(U_0) := \int_{U_0}^{U_+} \frac{u^2 du}{\sqrt{E + A(u)}},
\end{equation}
so that $\mu = 2 B(U_0) + {\rm arctan}(e^{-2a})$ according to (\ref{mass-representation}).
Here we remind that $A(u) = u^2 - u^6$, the value of $E$ is given by $E = -\frac{3}{4} A(U_0)$,
and $U_+$ is the largest positive root of $E + A(u) = 0$ such that $U_+ \geq U_* := \frac{1}{3^{1/4}}$.

Define $p := \sqrt{E + A(u)}$ and compute for every $u \in (0,1)$:
\begin{eqnarray*}
d \left( \frac{2 p u^2 [A(u) - A(U_*)]}{A'(u)} \right) & = &
2 \left[ 1 + \frac{2 (1+9u^4) [A(u) - A(U_*)]}{[A'(u)]^2} \right] p u^2 du + \frac{u^2 [A(u) - A(U_*)]}{p} du,
\end{eqnarray*}
where we have used explicitly $A'(u) = 2u (1 - 3 u^4)$ and $A''(u) = 2 (1 - 15 u^4)$.
All terms in this expression are non-singular for every $u \in (0,1)$. It enables us
to express the function $B(U_0)$ in the equivalent way:
\begin{eqnarray*}
[E + A(U_*)] B(U_0) & = & \int_{U_0}^{U_+} p u^2 du - \int_{U_0}^{U_+} \frac{u^2 [A(u) - A(U_*)]}{p} du \\
& = & \int_{U_0}^{U_+} \left[ 3 + \frac{4 (1 + 9 u^2) [A(u) - A(U_*)]}{[A'(u)]^2} \right] p u^2 du \\
& \phantom{t} & + \frac{U_0^2 [A(U_0) - A(U_*)]}{A'(U_0)} \sqrt{A(U_0)},
\end{eqnarray*}
where we have used that $2 \sqrt{E + A(U_0)} = \sqrt{A(U_0)}$. The right-hand side is $C^1$ in $U_0$ on $(0,1)$,
hence the derivative is computed explicitly in the form
\begin{eqnarray}
[E + A(U_*)] B'(U_0) = \frac{1}{4 \sqrt{3}} A'(U_0) \int_{U_0}^{U_+} \frac{(1 - \sqrt{3} u^2)}{(1 + \sqrt{3} u^2)^2} \frac{du}{p} - \frac{U_0^2 A(U_*)}{2\sqrt{A(U_0)}},
\label{derivative-B}
\end{eqnarray}
where we have used again the explicit representation for $A(u)$ and $E$.
Similarly, we compute directly with the help of (\ref{a-representation}) that
\begin{equation}
\label{derivative-tail}
\frac{d}{dU_0} {\rm arctan}(e^{-2a}) = \frac{U_0}{\sqrt{1-U_0^4}} = \frac{U_0^2}{\sqrt{A(U_0)}}.
\end{equation}
Bringing (\ref{derivative-B}) and (\ref{derivative-tail}) together yields
\begin{equation}
\label{derivative-mass}
[E + A(U_*)] \mu'(U_0) = \frac{1}{2 \sqrt{3}} A'(U_0) \int_{U_0}^{U_+} \frac{(1 - \sqrt{3} u^2)}{(1 + \sqrt{3} u^2)^2} \frac{du}{p}
- \frac{3}{4} U_0^2 \sqrt{A(U_0)}.
\end{equation}

Next, we show that $\mu'(U_0) < 0$ if $U_0 \in [U_*,1)$. Since $A'(U_0) \leq 0$ for $U_0 \in [U_*,1)$ and
$1 - \sqrt{3} u^2 \leq 0$ for $u \in [U_*,1)$, the first term in (\ref{derivative-mass})
is positive, whereas the second term is negative. In order to combine them together, we integrate by
parts and obtain:
\begin{eqnarray*}
\int_{U_0}^{U_+} \frac{(1 - \sqrt{3} u^2)}{(1 + \sqrt{3} u^2)^2} \frac{du}{\sqrt{E + A(u)}}
& = & \frac{1}{2} \int_{U_0}^{U_+} \frac{A'(u)}{u (1 + \sqrt{3} u^2)^3} \frac{du}{\sqrt{E + A(u)}} \\
& = & - \frac{\sqrt{A(U_0)}}{2 U_0 (1 + \sqrt{3} U_0^2)^3} + \int_{U_0}^{U_+} \frac{(1 + 7 \sqrt{3} u^2) \sqrt{E + A(u)}}{u^2 (1 + \sqrt{3} u^2)^4} du.
\end{eqnarray*}
Substituting this representation into (\ref{derivative-mass}) yields
\begin{eqnarray*}
[E + A(U_*)] \mu'(U_0) & = & \frac{1}{2 \sqrt{3}} A'(U_0) \int_{U_0}^{U_+} \frac{(1 + 7 \sqrt{3} u^2) \sqrt{E + A(u)}}{u^2 (1 + \sqrt{3} u^2)^4} du \\
& \phantom{t} & - \frac{3}{4} U_0^2 \sqrt{A(U_0)} - \frac{A'(U_0)}{4 \sqrt{3} U_0 (1 + \sqrt{3} U_0^2)^3} \sqrt{A(U_0)}
\end{eqnarray*}
The first term in the right-hand side is now negative since $A'(U_0) \leq 0$ for $U_0 \in [U_*,1)$, whereas the other two terms are combined together
to give a negative expression:
\begin{eqnarray*}
- \frac{3}{4} U_0^2 \sqrt{A(U_0)} - \frac{A'(U_0)}{4 \sqrt{3} U_0 (1 + \sqrt{3} U_0^2)^3} \sqrt{A(U_0)}
= - \frac{2 + \sqrt{3} U_0^2 + 18 U_0^4 + 9 \sqrt{3} U_0^6}{4 \sqrt{3} (1 + \sqrt{3} U_0^2)^2} \sqrt{A(U_0)}.
\end{eqnarray*}
Hence $\mu'(U_0) < 0$ if $U_0 \in [U_*,1)$.

Finally, we show that there exists a unique $U_1 \in (0,U_*)$ such that $\mu'(U_1) = 0$.
Consider all possible values of $U_0 \in (0,U_*)$ for which $\mu'(U_0) = 0$. It follows
from (\ref{derivative-mass}) that this value of $U_0 \in (0,U_*)$ is a solution of the nonlinear equation
\begin{equation}
\label{root-F-G}
F(U_0) := \int_{U_0}^{U_+} \frac{(1 - \sqrt{3} u^2)}{(1 + \sqrt{3} u^2)^2} \frac{du}{\sqrt{E + A(u)}} =
\frac{3 \sqrt{3} U_0^2 \sqrt{1 - U_0^4}}{4 (1 - 3 U_0^4)} =: G(U_0).
\end{equation}
The map $(0,U_*) \ni U_0 \mapsto G(U_0) \in (0,\infty)$ is monotonically increasing due to the following computation:
$$
G'(U_0) = \frac{3 \sqrt{3} U_0 (1 + U_0^4)}{2 (1 - 3 U_0^4)^2 \sqrt{1 - U_0^4}} > 0
$$
and the limits
$$
\lim_{U_0 \to 0} G(U_0) = 0 \quad \mbox{\rm and} \quad \lim_{U_0 \to U_*} G(U_0) = \infty.
$$
On the other hand, the map $(0,U_*) \ni U_0 \mapsto F(U) \in \mathbb{R}$ is monotonically decreasing.
Indeed, by using the same integration by parts as above, we write
$$
F(U_0) =  \int_{U_0}^{U_+} \frac{(1 + 7 \sqrt{3} u^2) \sqrt{E + A(u)}}{u^2 (1 + \sqrt{3} u^2)^4} du
- \frac{\sqrt{A(U_0)}}{2 U_0 (1 + \sqrt{3} U_0^2)^3},
$$
where the first integral can be differentiated in $U_0$. Thus, we obtain
\begin{eqnarray*}
F'(U_0) & = & -\frac{3}{8} A'(U_0) \int_{U_0}^{U_+} \frac{(1 + 7 \sqrt{3} u^2)}{u^2 (1 + \sqrt{3} u^2)^4} \frac{du}{\sqrt{E + A(u)}} \\
& \phantom{t} & - \frac{(1 + 7 \sqrt{3} U_0^2) \sqrt{1 - U_0^4}}{2 U_0 (1 + \sqrt{3} U_0^2)^4}
+ \frac{U_0^3}{(1 + \sqrt{3} U_0^2)^3 \sqrt{1 - U_0^4}} + \frac{3 \sqrt{3} U_0 \sqrt{1 - U_0^4}}{(1 + \sqrt{3} U_0^2)^4},
\end{eqnarray*}
where the first term is negative since $A'(U_0) > 0$ for $U_0 \in (0,U_*)$. We check that the other terms
are combined together in the negative expression for $U_0 \in (0,U_*)$:
\begin{eqnarray*}
& \phantom{t} &
- \frac{(1 + 7 \sqrt{3} U_0^2) \sqrt{1 - U_0^4}}{2 U_0 (1 + \sqrt{3} U_0^2)^4}
+ \frac{U_0^3}{(1 + \sqrt{3} U_0^2)^3 \sqrt{1 - U_0^4}} + \frac{3 \sqrt{3} U_0 \sqrt{1 - U_0^4}}{(1 + \sqrt{3} U_0^2)^4} \\
& = &
- \frac{\sqrt{1 - U_0^4}}{2 U_0 (1 + \sqrt{3} U_0^2)^3}
+ \frac{U_0^3}{(1 + \sqrt{3} U_0^2)^3 \sqrt{1 - U_0^4}} \\
& = & - \frac{1 - \sqrt{3} U_0^2}{2 U_0 (1 + \sqrt{3} U_0^2)^2 \sqrt{1 - U_0^4}}.
\end{eqnarray*}
Hence $F'(U_0) < 0$ for $U_0 \in (0,U_*)$. It is clear that
$$
\lim_{U_0 \to 0} F(U_0) = \infty \quad \mbox{\rm and} \quad
\lim_{U_0 \to U_*} F(U_0) = F(U_*) < 0.
$$
By monotonicity and range of $F$ and $G$, there exists a unique
$U_1 \in (0,U_*)$ for which $F(U_1) = G(U_1)$. Moreover, $U_1$ is a simple root of the nonlinear equation (\ref{root-F-G}).
Hence $\mu'(U_0) > 0$ for $U_0 \in (0,U_1)$ and $\mu'(U_0) < 0$ for $U_0 \in (U_1,1)$.
\end{proof}

\appendix

\section{Characterization of the spectrum of $\Delta$ in $L^2(\mathcal{T})$}
\label{app-spectrum}

For completeness, we include the well-known characterization of $\sigma(-\Delta)$ in $L^2(\mathcal{T})$,
as in the following proposition.

\begin{proposition}
\label{deltaspectrum}
The spectrum of $-\Delta$ in $L^2(\mathcal{T})$ is given by $\sigma(-\Delta) = [0,\infty)$
and consists of the absolutely continuous spectrum $\sigma_{ac}(-\Delta) = [0,\infty)$ and
a sequence of simple embedded eigenvalues $\{ n^2 \}_{n \in \mathbb{N}}$.
\end{proposition}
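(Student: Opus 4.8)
The plan is to analyze the eigenvalue equation $-\Delta \Psi = \lambda \Psi$ edge by edge and to glue the explicit edge solutions through the Neumann--Kirchhoff conditions (\ref{dd}). The containment $\sigma(-\Delta) \subseteq [0,\infty)$ is already supplied by (\ref{by-parts}), so it only remains to identify the absolutely continuous part and the point spectrum inside $[0,\infty)$. Throughout I write $\Psi = (u,v)$ with $u$ on the circle $[-\pi,\pi]$ and $v$ on the half-line $[0,\infty)$, and I set $\lambda = k^2$ with $k \geq 0$.

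First I would pin down the essential spectrum. The tadpole Laplacian differs from the decoupled operator (the interval $[-\pi,\pi]$ with a self-adjoint condition at each vertex-copy, direct-summed with a half-line Laplacian on $[0,\infty)$) only through the coupling at the single vertex, which is a finite-rank modification of the resolvent. Since the half-line Laplacian has purely absolutely continuous spectrum $[0,\infty)$ while the interval piece has discrete spectrum, Weyl's theorem on stability of the essential spectrum gives $\sigma_{ess}(-\Delta) = [0,\infty)$. Concretely, for each $\lambda = k^2 \geq 0$ one builds a singular Weyl sequence $\Psi_n = (0, v_n)$ with $v_n(x) = \chi\!\left(\frac{x-x_n}{L_n}\right) e^{ikx}$ supported on an interval of the half-line receding to infinity ($x_n \to \infty$, $L_n \to \infty$ slowly); these functions satisfy (\ref{dd}) trivially since they vanish near the vertex, and they obey $\|(-\Delta-\lambda)\Psi_n\|_{L^2(\mathcal{T})} / \|\Psi_n\|_{L^2(\mathcal{T})} \to 0$, so $\lambda \in \sigma(-\Delta)$. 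This proves $\sigma(-\Delta) = [0,\infty)$, with the continuous part carried by the half-line.

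Next I would locate the embedded eigenvalues. If $(u,v) \in H^2_{\rm NK}(\mathcal{T})$ is an $L^2$-eigenfunction with $\lambda \geq 0$, then $v$ solves $-v'' = \lambda v$ on $(0,\infty)$; for $\lambda > 0$ the solutions are oscillatory and for $\lambda = 0$ affine, so in either case the only $L^2(0,\infty)$ solution is $v \equiv 0$, whence $v(0) = v'(0) = 0$. The conditions (\ref{dd}) then collapse to $u(\pi) = u(-\pi) = 0$ together with $u'(\pi) = u'(-\pi)$ for $u$ solving $-u'' = k^2 u$ on $(-\pi,\pi)$. Inserting $u(x) = A\cos(kx) + B\sin(kx)$ and imposing these three conditions gives a homogeneous linear system for $(A,B)$ whose solvability forces $\sin(k\pi) = 0$, i.e. $k = n \in \mathbb{N}$ and $\lambda = n^2$, with $A = 0$ and eigenfunction $u(x) = \sin(nx)$, $v \equiv 0$. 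The competing branch $\cos(nx)$ violates $u(\pi) = 0$, so each eigenvalue $n^2$ is simple; the case $\lambda = 0$ yields only the trivial solution, so $0$ is not an eigenvalue. This produces exactly the sequence $\{n^2\}_{n \in \mathbb{N}}$ of simple embedded eigenvalues.

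I expect the only delicate point to be the assertion that the continuous spectrum is purely \emph{absolutely} continuous rather than merely essential. The Weyl-sequence and $L^2$ arguments cleanly deliver $\sigma(-\Delta) = [0,\infty)$ together with the embedded eigenvalues $\{n^2\}$, but excluding a singular continuous part genuinely uses the explicit, rational-in-$k$ form of the graph resolvent (or an equivalent limiting-absorption estimate). Since this is the well-known ingredient invoked here only for completeness, I would dispatch it by appealing to the standard spectral theory of quantum graphs with a single lead (e.g. \cite{BK13,Exner}) rather than reproving it.
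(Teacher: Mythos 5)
Your proposal is correct, and its core --- splitting the eigenvalue equation according to whether the half-line component $v$ vanishes, solving the edge ODEs explicitly, and gluing through the Neumann--Kirchhoff conditions (\ref{dd}) --- is the same as the paper's. The embedded-eigenvalue analysis is essentially identical: the paper likewise observes that an $L^2$ eigenfunction with $\lambda \geq 0$ forces $v \equiv 0$, reduces to the problem $-u''=\lambda u$ with $u(\pm\pi)=0$, $u'(\pi)=u'(-\pi)$, and obtains the simple eigenvalues $\lambda=n^2$ with eigenfunctions $(\sin(nx),0)$. Where you diverge is in the continuous part: you identify $\sigma_{ess}(-\Delta)=[0,\infty)$ via Weyl sequences escaping along the lead and then defer the purely absolutely continuous nature of $[0,\infty)$ to the standard quantum-graph literature, whereas the paper writes down the Jost (generalized eigenfunction) solutions explicitly, computes the coefficients $a(k)$ and $b(k)$ in (\ref{a-b-expressions}) from (\ref{dd}), and notes that they are bounded and nonzero for all $k\geq 0$, so there are no spectral singularities --- this explicit scattering computation is precisely the ingredient that closes the gap you flag, and it costs only two lines in your own setup. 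The paper also redundantly re-derives the absence of negative eigenvalues from the transcendental equation $1+2\tanh(\pi\sqrt{|\lambda|})=0$, while you more economically invoke the form positivity (\ref{by-parts}); both are valid. Your Weyl-sequence argument buys a proof of $\sigma(-\Delta)=[0,\infty)$ that does not require the explicit Jost functions, but since the proposition also asserts absolute continuity, the explicit $a(k),b(k)$ computation (or a limiting absorption principle) cannot ultimately be avoided, and the paper's route delivers it directly.
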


\begin{proof}
Let us consider the spectral problem $-\Delta U = \lambda U$ with $U = (u,v) \in H^2_{\rm NK}(\mathcal{T})$.
Due to the geometry of the tadpole graph $\mathcal{T}$, the spectrum of $-\Delta$ in $L^2(\mathcal{T})$
is the union of two sets: the set of $\lambda$ for which $v = 0$  and the set of $\lambda$ for which $v \neq 0$.

The first set is defined by the pure point
spectrum of the spectral problem
\begin{equation}
\label{embedded-spectrum}
\left\{ \begin{array}{ll} - u'' = \lambda u, \quad & x \in (-\pi,\pi), \\
u(-\pi) = u(\pi) = 0, & \\
u'(-\pi) = u'(\pi). & \end{array} \right.
\end{equation}
Eigenvalues of the spectral problem (\ref{embedded-spectrum}) are located
at $\{0, 1, 4, 9, \dots \}$ and for each $\lambda = n^2$, $n \in \NA$, the eigenfunction
of $-\Delta$ is given by
\begin{equation}
\label{eigenfunction-1}
\left\{ \begin{array}{ll}
u(x) = \sin(nx), \quad & x \in [-\pi,\pi], \\
v(x) = 0, \quad & x \in [0,\infty). \end{array} \right.
\end{equation}

The second set includes the absolute continuous spectrum of $-\Delta$ located on $[0,\infty)$
and for each $\lambda = k^2$ with $k \in [0,\infty)$ the Jost function of $-\Delta$ is given by
\begin{equation}
\label{eigenfunction-2}
\left\{ \begin{array}{ll}
u(x) = a(k) \left[ e^{ikx} + e^{-ikx} \right], \quad & x \in [-\pi,\pi], \\
v(x) = e^{ikx} + b(k) e^{-ikx}, \quad & x \in [0,\infty). \end{array} \right.
\end{equation}
where
\begin{equation}
\label{a-b-expressions}
a(k) = \frac{1}{\cos(\pi k) + 2i \sin(\pi k)}, \quad b(k) = \frac{\cos(\pi k) - 2 i \sin(\pi k)}{\cos(\pi k) + 2i \sin(\pi k)},
\end{equation}
are found from the Neumann--Kirchhoff boundary conditions (\ref{dd}).
Because $a(k)$ and $b(k)$ are bounded and nonzero, there are no spectral singularities
in the absolute continuous spectrum of $-\Delta$ in $L^2(\mathcal{T})$.

It remains to check if the second set includes isolated eigenvalues $\lambda < 0$ with $v \neq 0$.
Representing a possible eigenfunction of $-\Delta$ for $\lambda < 0$ as
\begin{equation}
\left\{ \begin{array}{ll} u(x) = \frac{\cosh(\sqrt{|\lambda|} x)}{\cosh(\pi \sqrt{|\lambda|})}, \quad & x \in [-\pi,\pi], \\
v(x) = e^{-\sqrt{|\lambda|} x}, \quad & x \in [0,\infty), \end{array} \right.
\end{equation}
we obtain from the Neumann--Kirchhoff boundary conditions (\ref{dd}) that $\lambda$ is a solution to the transcendental equation:
\begin{equation}
1 + 2 \tanh(\pi \sqrt{|\lambda|}) = 0,
\end{equation}
which has no roots for real $\lambda$.
\end{proof}

\begin{remark}
It follows from (\ref{a-b-expressions}) that $a(k)$ and $b(k)$ are free of singularities for every $k \in [0,\infty)$ including the values
$k = n$, $n \in \mathbb{N}$ which correspond to the embedded eigenvalues. This is because
the odd subspace of eigenfunctions (\ref{eigenfunction-1}) for embedded eigenvalues and
the even subspace of Jost functions (\ref{eigenfunction-2}) for the absolute continuous spectrum
are uncoupled in the Neumann--Kirchhoff boundary conditions (\ref{dd}).
\end{remark}

\section{Relation between the constrained minimization problems}
\label{app-relation}

The constrained minimization problem (\ref{infB}) is related to the minimization of the action
\begin{equation}
S_{\omega}(U) = E(U) - \omega Q(U)
\end{equation}
on the Nehari manifold
\begin{equation}
B_{\omega}(U) - 3 \| U \|^6_{L^6(\mathcal{T})}=0,
\end{equation}
which characterizes the set of solutions of the stationary NLS equation (\ref{eqstaz}).
The following proposition establishes a relation of the latter minimization problem
with minimization of $B_{\omega}(U)$ at fixed $\| U \|_{L^6(\mathcal{T})}^6$.
Note that this result is not used in the main part of our paper and is added here for completeness.

\begin{proposition}
\label{prop-Appendix-B}
For every $\omega < 0$, there exists $\mathcal{M}(\omega) > 0$ such that
$\mathcal{P}(\omega)\leq\mathcal{N}(\omega)$, where $\mathcal{P}(\omega)$
and $\mathcal{N}(\omega)$ represent the following two constrained minimization problems:
\begin{equation}
\label{Nehari}
\mathcal{N}(\omega) := \inf_{U \in H^1_{\rm C}(\mathcal{T}) \backslash \{0\}} \left\{ \frac{3}{2} S_{\omega}(U) : \;\; \| U \|^6_{L^6(\mathcal{T})}=\frac{1}{3}B_{\omega}(U)  \right\}
\end{equation}
and
\begin{equation}
\label{L6norm}
\mathcal{P}(\omega) := \inf_{U \in H^1_{\rm C}(\mathcal{T})} \left\{ B_{\omega}(U)\ :\  \| U \|^6_{L^6(\mathcal{T})}  = \mathcal{M}^6(\omega)  \right\}
\end{equation}
Moreover, if $U$ is a minimizer of the variational problem \eqref{L6norm}, then
there exist $\beta(\omega) \geq 1$ such that $V :=\beta(\omega)U $ is a critical point of the variational problem \eqref{Nehari},
whereas if $V$ is a minimizer of the variational problem \eqref{Nehari}, then
there exists $\beta(\omega) \geq 1$ such that $U := V/\beta(\omega)$ is a critical point of the variational problem \eqref{L6norm}.
\end{proposition}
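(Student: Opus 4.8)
The plan is to reduce both variational problems to the single normalized problem \eqref{infB} by exploiting homogeneity: under the dilation $U\mapsto tU$ the functional $B_\omega$ scales as $B_\omega(tU)=t^2B_\omega(U)$, while $\|U\|_{L^6(\mathcal T)}^6$ scales as $t^6\|U\|_{L^6(\mathcal T)}^6$. First I would treat \eqref{L6norm}: writing $U=\mathcal M(\omega)W$ with $\|W\|_{L^6(\mathcal T)}=1$ gives $\mathcal P(\omega)=\mathcal M^2(\omega)\,\mathcal B(\omega)$, and the minimizers of \eqref{L6norm} are exactly $\mathcal M(\omega)$ times the minimizers of \eqref{infB} obtained in Theorem \ref{theorem-existence}. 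Since $B_\omega\geq 0$, the same scaling shows that minimizing $B_\omega$ on the sphere $\{\|U\|_{L^6(\mathcal T)}=\mathcal M(\omega)\}$ coincides with minimizing it on the exterior $\{\|U\|_{L^6(\mathcal T)}\geq\mathcal M(\omega)\}$, because any competitor of larger $L^6$ norm can be dilated down, strictly decreasing $B_\omega$; this is the remark invoked in the proof of Lemma \ref{lemma-1-2}.

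Next I would handle \eqref{Nehari} by the fibering method. Along a fixed ray $t\mapsto tU$ with $U\neq 0$ one has $S_\omega(tU)=t^2B_\omega(U)-t^6\|U\|_{L^6(\mathcal T)}^6$, which vanishes at $t=0$, increases to a unique positive maximum, and then decreases; the maximizer is $t_\ast(U)=\bigl(B_\omega(U)/(3\|U\|_{L^6(\mathcal T)}^6)\bigr)^{1/4}$, precisely the factor placing $tU$ on the Nehari manifold $B_\omega=3\|\cdot\|_{L^6(\mathcal T)}^6$. Using the Nehari constraint at $t_\ast$ gives $\tfrac32 S_\omega(t_\ast U)=t_\ast^2B_\omega(U)=B_\omega(U)^{3/2}/(\sqrt3\,\|U\|_{L^6(\mathcal T)}^3)$, so that taking the infimum over directions yields the identity $\mathcal N(\omega)=\tfrac1{\sqrt3}\,\mathcal B(\omega)^{3/2}$.

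With both values written through $\mathcal B(\omega)$ the comparison becomes algebraic: $\mathcal P(\omega)=\mathcal M^2(\omega)\mathcal B(\omega)\leq\tfrac1{\sqrt3}\mathcal B(\omega)^{3/2}=\mathcal N(\omega)$ holds exactly when $\mathcal M(\omega)\leq(\mathcal B(\omega)/3)^{1/4}$, and any such $\mathcal M(\omega)>0$ proves the first assertion (the endpoint giving equality). For the correspondence, given a minimizer $U$ of \eqref{L6norm} I would set $\beta(\omega):=t_\ast(U)=(\mathcal B(\omega)/(3\mathcal M^4(\omega)))^{1/4}$; then $V:=\beta(\omega)U$ lies on the Nehari manifold, and since $U/\mathcal M(\omega)$ realizes $\mathcal B(\omega)$, the point $V$ realizes $\mathcal N(\omega)$ and hence is a critical point of \eqref{Nehari}. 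The requirement $\beta(\omega)\geq 1$ is equivalent to $\mathcal M^4(\omega)\leq\mathcal B(\omega)/3$, i.e. to the very inequality guaranteeing $\mathcal P\leq\mathcal N$, so it holds automatically. The converse is symmetric: a minimizer $V$ of \eqref{Nehari} is the scaled solution of \eqref{eqstaz} with $\|V\|_{L^6(\mathcal T)}=(\mathcal B(\omega)/3)^{1/4}$, and $U:=V/\beta(\omega)$ with $\beta(\omega)=\|V\|_{L^6(\mathcal T)}/\mathcal M(\omega)\geq 1$ lands on the constraint sphere of \eqref{L6norm} and is a critical point there.

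Most of the above is routine once the scaling is fixed; the step requiring genuine care is the fibering identity $\mathcal N(\omega)=\tfrac1{\sqrt3}\mathcal B(\omega)^{3/2}$, namely the verification that the infimum of $\tfrac32 S_\omega$ over the Nehari manifold equals $\inf_{U\neq 0}\max_{t>0}\tfrac32 S_\omega(tU)$ and that this common value is governed by $\mathcal B(\omega)$. It is this matching of variational levels — together with the Lagrange-multiplier bookkeeping that identifies $\beta(\omega)=\nu^{1/4}$, where $\nu=B_\omega(U)/(3\|U\|_{L^6(\mathcal T)}^6)$ is the multiplier of the $\mathcal P$-problem — that ensures a $\mathcal P$-minimizer projects onto an actual $\mathcal N$-minimizer rather than merely onto some point of the Nehari manifold.
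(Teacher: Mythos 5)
Your argument is correct, but it takes a different route from the paper. The paper does not compute either level explicitly: it fixes $\mathcal{M}(\omega):=\bigl(C_-(\omega)/(3C^2)\bigr)^{1/4}$ directly from the norm-equivalence constant in \eqref{B-equivalence} and the Sobolev constant in \eqref{Sobolev-embedding}, observes that every point of the Nehari manifold then satisfies $\|U\|_{L^6(\mathcal{T})}\geq\mathcal{M}(\omega)$, so that $\mathcal{N}(\omega)$ dominates the infimum of $B_\omega$ over the exterior region $\{\|U\|_{L^6(\mathcal{T})}\geq\mathcal{M}(\omega)\}$, and identifies that exterior infimum with $\mathcal{P}(\omega)$ by the same dilate-down trick you use; the correspondence between minimizers is then done through the Lagrange multiplier $\Lambda=B_\omega(U)/(3\mathcal{M}^6(\omega))$, with $\Lambda\geq 1$ following from the definition of $\mathcal{M}(\omega)$. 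You instead evaluate both levels in closed form, $\mathcal{P}(\omega)=\mathcal{M}^2(\omega)\mathcal{B}(\omega)$ and $\mathcal{N}(\omega)=\mathcal{B}(\omega)^{3/2}/\sqrt{3}$, and read off the admissible range $\mathcal{M}(\omega)\leq(\mathcal{B}(\omega)/3)^{1/4}$. Your approach buys more: it identifies the sharp threshold for $\mathcal{M}(\omega)$ (the paper's choice lies inside your range, since $\mathcal{B}(\omega)\geq C_-(\omega)/C^2$) and shows that the scaling $\beta(\omega)=t_*(U)$ carries minimizers to minimizers, not merely to critical points. The paper's choice buys an $\mathcal{M}(\omega)$ that is explicit in the embedding constants without reference to $\mathcal{B}(\omega)$, and its exterior-region formulation is exactly the statement reused in the proof of Lemma \ref{lemma-1-2}. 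Two small remarks: the fibering identity you flag as delicate is in fact elementary here, since each ray $t\mapsto tU$ meets the Nehari manifold exactly once at $t_*(U)$ and the map $x\mapsto x^{3/2}/\sqrt{3}$ is increasing, so the infimum over the manifold equals the infimum over unit-$L^6$ directions; and you should note (as the paper's derivation of $\mathcal{M}(\omega)$ implicitly does) that \eqref{B-equivalence} and \eqref{Sobolev-embedding} give $\mathcal{B}(\omega)>0$, so that $t_*(U)$ is well defined and the Nehari manifold is bounded away from the origin.
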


\begin{proof}
By using the constraint $B_{\omega}(U) = 3 \| U \|^6_{L^6(\mathcal{T})}$, we rewrite (\ref{Nehari}) in the equivalent form:
\begin{eqnarray}
\label{problem-1}
\mathcal{N}(\omega) = \inf_{U \in H^1_{\rm C}(\mathcal{T}) \backslash \{0\}} \left\{ B_{\omega}(U)\ : \| U \|^6_{L^6(\mathcal{T})} =\frac{1}{3} B_{\omega}(U) \right\}.
\end{eqnarray}
It follows from the lower bound in (\ref{B-equivalence}) and the Sobolev's embedding (\ref{Sobolev-embedding}) that
$$
 \| U \|^6_{L^6(\mathcal{T})} =\frac{1}{3}B_{\omega}(U)\geq \frac{C_-(\omega)}{3C^2} \| U \|^2_{L^6(\mathcal{T})}.
$$
Since $U \neq 0$, this implies that
\beq\label{Momega}
 \| U \|_{L^6(\mathcal{T})} \geq \left( \frac{C_-(\omega)}{3 C^2} \right)^{1/4} =: \mathcal{M}(\omega).
\eeq
Hence, $\mathcal{N}'(\omega) \leq \mathcal{N}(\omega)$, where
\begin{eqnarray}
\label{problem-2}
\mathcal{N}'(\omega) :=  \inf_{U \in H^1_{\rm C}(\mathcal{T})} \left\{ B_{\omega}(U)\ :\  \| U \|^6_{L^6(\mathcal{T})}  \geq \mathcal{M}^6(\omega)  \right\}.
\end{eqnarray}
where $\mathcal{M}(\omega)$ is uniquely defined by \eqref{Momega}.

By comparing \eqref{L6norm} and \eqref{problem-2}, it is obvious that  $\mathcal{N}'(\omega) \leq \mathcal{P(\omega)}$.
In order to show that $\mathcal{N}'(\omega) = \mathcal{P(\omega)}$, we will show the
reverse inequality $\mathcal{N}'(\omega)\geq \mathcal{P}(\omega)$. To do so, we
let $\{ U_n \}_{n\in \mathbb{N}} \in H^1_{\rm C}(\mathcal{T})$ be a minimizing sequence for the variational problem (\ref{problem-2})
satisfying $ \| U_n \|^6_{L^6(\mathcal{T})}\geq \mathcal{M}(\omega)$ for every $n \in \mathbb{N}$. Then,
we have $ \| \alpha_n U_n \|^6_{L^6(\mathcal{T})}=\mathcal{M}(\omega)$ with
$$
\alpha_n := \frac{\mathcal{M}(\omega)}{ \| U_n \|_{L^6(\mathcal{T})}} \leq 1, \quad n \in \mathbb{N}.
$$
Since $\alpha_n \leq 1$ for every $n \in \mathbb{N}$, this implies that
$$
\mathcal{P}(\omega) \leq B_{\omega}(\alpha_n U_n)=\alpha^2_n B_{\omega}( U_n )\leq B_{\omega}( U_n )$$
Taking the limit $n \to \infty$ yields $\mathcal{P}(\omega) \leq \mathcal{N}'(\omega)$ and so
$\mathcal{P}(\omega) = \mathcal{N}'(\omega) \leq \mathcal{N}(\omega)$.

Euler-Lagrange equation for the variational problem \eqref{L6norm} is given in the weak form by
\beq
\int_{\mathcal T} \nabla \overline U\nabla \chi dx -\omega \int_{\mathcal T} \overline U \chi dx = 3 \Lambda \int_{\mathcal T}  |U|^4\overline U \chi dx,
\eeq
where $\chi$ is a test function and $\Lambda$ the Lagrange multiplier. Let us assume that $U$ is the minimizer
of the variational problem (\ref{L6norm}). Testing $U$ with $\chi=U$ yields
\beq
\Lambda=\frac{B_{\omega}(U)}{3\mathcal{M}^6(\omega)}
\eeq
By setting $V := \Lambda^{\frac{1}{4}} U$, we obtain
\beq
\int_{\mathcal T} \nabla \overline V\nabla \chi dx -\omega \int_{\mathcal T} \overline V \chi dx = 3 \int_{\mathcal T}  |V|^4\overline V \chi dx,
\eeq
which is Euler--Lagrange equation for the variational problem \eqref{Nehari}) in the weak form.
Hence $V$ is a critical point of the variational problem (\ref{Nehari}).
It follows that $\Lambda \geq \frac{C_-}{3 C^2 \mathcal{M}^4(\omega)} = 1$.
Similarly, if $V$ is the minimizer of the variational problem \eqref{Nehari}, then
$U := \frac{V}{\Lambda^{\frac{1}{4}}}$ with $\Lambda := \frac{\| V \|^4_{L^6(\mathcal{T})}}{\mathcal{M}^4(\omega)} \geq 1$
is a critical point of the variational problem \eqref{L6norm}.
\end{proof}

\begin{remark}
The relation between the variational problems \eqref{Nehari} and \eqref{L6norm} in Proposition \ref{prop-Appendix-B}
does not allow to conclude that the minimizers of the two problems coincide. Notice however that if the minimizers
satisfy the same monotonicity properties stated in Theorem \ref{theorem-existence}, then
the conclusions of Theorem \ref{theorem-degeneracy} hold true and hence the minimizer of one problem
is at least a local minimum of the other problem.
\end{remark}

\begin{remark}
Thanks to the scaling transformation, the constrained minimization problem (\ref{L6norm})
can be normalized to the form (\ref{infB}) in the sense that the minimizers of both (\ref{infB}) and (\ref{L6norm})
are constant proportional to each other and the Euler--Lagrange equation for each problem is given by
the stationary NLS equation (\ref{eqstaz}).
\end{remark}

\section{Asymptotic computation of the integral (\ref{int-mass})}
\label{app-asymptotics}

Here we justify the asymptotic computation of the integral (\ref{int-mass}).
By using the exact solution (\ref{exact-solution}) and the asymptotic expansions
(\ref{expansions-rho}) and (\ref{expansion-k}) as $a \to \infty$, we obtain
\begin{eqnarray}
\label{int-1}
\mathcal{I}_{\varepsilon} := \int_0^{\pi \varepsilon^2} \rho(z) dz =
\int_0^{\pi \varepsilon^2 (1 + \mathcal{O}(e^{-2a}))} \frac{\left[ 1 + \mathcal{O}(e^{-2a}) \right] {\rm dn}^2(z;k)}{
2 - {\rm dn}^2(z;k) + \mathcal{O}(e^{-2 a})} dz,
\end{eqnarray}
where $\mathcal{O}(e^{-2a})$ stands for the error terms uniformly on the integration interval.
By using the asymptotic expansion 16.15 in \cite{AS} (justified in Proposition 4.6 and Appendix A in \cite{MarPel}),
we have
\begin{eqnarray}
\nonumber
{\rm dn}(z;k) & = & {\rm sech}(z) + \frac{1}{4} (1-k^2) \left[ \sinh(z) \cosh(z) + z \right]
\tanh(z) {\rm sech}(z) \\
\label{expansion-dn-extended}
& \phantom{t} & + \mathcal{O}((1-k^2)^2 z \cosh(z)),
\end{eqnarray}
for every $z \in [0,\pi \varepsilon^2]$ as long as $1-k^2 = \mathcal{O}(e^{-2 \pi \varepsilon^2})$
as in (\ref{k-unique}). Also recall that $\mathcal{O}(e^{-2a}) = \mathcal{O}(e^{-2\pi \varepsilon^2})$
as in (\ref{choices-unique}). Since the following integral converges as
$$
\int_0^{\infty} \frac{{\rm sech}(z)^2}{2 - {\rm sech}(z)^2} dz = \int_0^{\infty} \frac{dz}{\cosh(2z)} = \frac{\pi}{4},
$$
it follows that $\mathcal{I}_{\varepsilon}$ in (\ref{int-1}) can be expanded as $\varepsilon \to \infty$ in the form:
\begin{eqnarray}
\nonumber
\mathcal{I}_{\varepsilon} & = & \int_0^{\pi \varepsilon^2} \frac{{\rm dn}^2(z;k)}{
2 - {\rm dn}^2(z;k)} dz + \mathcal{O}(e^{-2 \pi \varepsilon^2}) \\
& = & \int_0^{\pi \varepsilon^2} \frac{\cosh(z)^2 {\rm dn}^2(z;k)}{
\cosh(2z)} dz + \mathcal{O}(e^{-2 \pi \varepsilon^2}).
\label{int-2}
\end{eqnarray}
Substituting (\ref{expansion-dn-extended}) into (\ref{int-2}) and moving terms of the order of
$\mathcal{O}(e^{-2\pi \varepsilon^2})$ from the integral to the remainder term yield
\begin{eqnarray*}
\mathcal{I}_{\varepsilon} & = & \int_0^{\pi \varepsilon^2} \frac{dz}{\cosh(2z)}
\left[ 1 + \frac{1}{4} (1-k^2) \left[ \sinh(z) \cosh(z) + z \right]
\tanh(z) + \mathcal{O}((1-k^2)^2 z \cosh(z)^2) \right]^2 \\
& \phantom{t} & \phantom{texttexttext} + \mathcal{O}(e^{-2 \pi \varepsilon^2}) \\
& = & \int_0^{\pi \varepsilon^2} \frac{dz}{\cosh(2z)}
\left[ 1 + \frac{1}{4} (1-k^2) \sinh(2z) \tanh(z)\right]
 + \mathcal{O}(e^{-2 \pi \varepsilon^2}) \\
& = & \frac{\pi}{4} + \frac{1}{4} (1 - k^2) \pi \varepsilon^2 + \mathcal{O}(e^{-2\pi \varepsilon^2}),
\end{eqnarray*}
where we have used the asymptotic balance of $1-k^2 = \mathcal{O}(e^{-2 \pi \varepsilon^2})$.
Substituting (\ref{k-unique}) into the latter expansion yields (\ref{int-mass}).

\subsection*{Acknowledgments.}

The present project was initiated when the first author visited Jeremy Marzuola at UNC at Chapel Hill.
Both the authors are very grateful to Jeremy for many useful discussions.
The authors also thank Sergio Rolando for valuable comments and Adilbek Kairzhan for help
in the proof of Lemma \ref{lemma-mass-monotonicity}. The authors appreciated the critical remarks of an anonymous referee, which pushed them to prove a stronger version of Theorem \ref{theorem-persistence}. 

D.Noja has received funding for this project from the European Union's Horizon 2020 research and innovation programme under the Marie Sk\l odowska-Curie grant no 778010 IPaDEGAN. D.E. Pelinovsky
acknowledges the support of the NSERC Discovery grant.


\begin{thebibliography}{99}

\bibitem{AS}  M. Abramowitz and I.A. Stegun, {\em Handbook of mathematical functions with formulas, graphs, and mathematical tables},
(Dover Publications, NY, 1972).


\bibitem{[ACFN14]}
R.~Adami, C.~Cacciapuoti, D.~Finco, and D.~Noja, \emph{Variational properties
  and orbital stability of standing waves for {NLS} equation on a star graph},
J. Differ.  Equations {\bf 257} (2014), 3738--3777.




\bibitem{AST15} R. Adami, E. Serra, and P. Tilli, {\em NLS ground states on graphs},
Calc.Var. {\bf 54} (2015), 743--761.

\bibitem{AST16} R. Adami, E. Serra, and P. Tilli, {\em Threshold phenomena and existence results for NLS ground states
on metric graphs}, \newblock{\em J. Funct. Anal.} {\bf 271} (2016), 201-223.

\bibitem{AST17} R. Adami, E. Serra, and P. Tilli, {\em Negative energy ground states for the $L^2$-critical NLSE
on metric graphs}, \newblock{\em Comm. Math. Phys.} {\bf 352} (2017), 387-406.

\bibitem{AST19} R.Adami, E.Serra, and P. Tilli,  {\em Multiple positive bound states for the subcritical NLS equation on metric graphs},
\newblock{\em Calc. Var.} {\bf 58} (2019), no. 1, 5, 16pp.

\bibitem{Agueh06} M. Agueh, {\em Sharp Gagliardo-Nirenberg inequalities and Mass Transport
Theory}, Journal of Dynamics and Differential Equations, {\bf 18}, (4) (2006) 1069-1093.

\bibitem{Agueh08} M. Agueh, {\em Gagliardo-Nirenberg inequalities involving the gradient $L^2$-
norm}, C. R. Acad. Sci. Paris, Ser. I {\bf 346} (2008) 757-762.

\bibitem{AkPankov19} S. Akduman, A. Pankov, {\em Nonlinear Schr\"odinger equation with growing potential on infinite metric graphs}, Nonlinear analysis, {\bf 184}, (2019) 258-272

\bibitem{Ardila18} A. H. Ardila, {\em Orbital stability of standing waves for supercritical NLS
with potential on graphs}, {Applicable Analysis} {\bf 99} (8) (2020) 1359-1372 

\bibitem{BK13} G. Berkolaiko and P. Kuchment, {\em Introduction to quantum graphs},
Mathematical Surveys and Monographs 186. AMS, Providence, RI (2013).

\bibitem{BL} H. Brezis and E.H. Lieb, {\em A relation between pointwise convergence of functions and
convergence of functionals}, Proc. Amer. Math. Soc. {\bf 88} (1983), 486--490.

\bibitem{C18} C. Cacciapuoti, {\em Existence of the ground state for the NLS with potential on graphs}, in
\emph{Contemporary Mathematics, Mathematical Problems in Quantum Physics}, {\bf 717} (2018)155-172

\bibitem{cfn15} C. Cacciapuoti, D. Finco, and  D. Noja, {\em Topology induced bifurcations for the NLS on the tadpole graph}, {Phys. Rev. E}
\textbf{91} (2015), no. 1,  013206, 8 pp.

\bibitem{CFN17} C. Cacciapuoti, D. Finco, and D. Noja, {\em Ground state and orbital stability for the NLS equation on a general starlike graph with potentials}, Nonlinearity {\bf 30}, 8, 3271-3303 (2017)

\bibitem{Cazenave}  T. Cazenave, {\em Semilinear Schr\"{o}dinger equations},
Courant Lecture Notes in Mathematics {\bf 10} (New York University,
Courant Institute of Mathematical Sciences, New York, 2003).

\bibitem{ChenPel} J. Chen and D.E. Pelinovsky, {\em Periodic travelling waves of the modified KdV equation
and rogue waves on the periodic background}, J. Nonlinear Sci. {\bf 29} (2019), 2797--2843.


\bibitem{DELL14} J. Dolbeault,  M.J. Esteban,  A. Laptev,  and M. Loss, One-dimensional Gagliardo-Nirenberg-Sobolev inequalities: remarks on duality and flows {\em J.London Math.Soc.} {\bf 90}(2) (2014) 525-550.

\bibitem{D19} S. Dovetta, {\em Variational problems for nonlinear Schr\"odinger equations on metric graphs}, PhD thesis, (2019)

\bibitem{DST20} S. Dovetta, E.Serra, and P.Tilli, {\em Uniqueness and non-uniqueness of prescribed mass NLS ground states on metric graphs}, arXiv:2004.07292 (2020)

\bibitem{Exner} P. Exner and H. Kovarik, {\em Quantum waveguides} (Springer, Cham--Heidelberg--New York--Dordrecht--London, 2015).


\bibitem{FOO2008} R. Fukuizumi, M. Ohta, and T. Ozawa, {\em Nonlinear Schr\"{o}dinger equation with a point defect},
{Ann. I.H. Poincar\'{e}} Anal. Non Lin\'eaire {\bf 25} (2008), 837--345.
\bibitem{GO19} N.Goloshchapova, M.Ohta, {\em Blow-up and strong instability of standing waves for the NLS-$\delta$ equation on a star graph}, Nonlinear Analysis vol. 196 (2020) 111753

\bibitem{KP1} A. Kairzhan and D.E. Pelinovsky, {\em Nonlinear instability of half-solitons on star graphs},
J. Diff. Eqs. {\bf 264} (2018), 7357--7383.

\bibitem{KP2} A. Kairzhan and D.E. Pelinovsky, {\em Spectral stability of shifted states on star graphs},
J. Phys. A: Math. Theor. {\bf 51} (2018) 095203 (23 pages).

\bibitem{KGP} A. Kairzhan, D.E. Pelinovsky, and R.H. Goodman, {\em Drift of spectrally stable shifted states on star graphs},
SIAM J. Appl. Dynam. Syst. {\bf 18} (2019), 1723--1755 (2019).

\bibitem{KMPZ} A. Kairzhan, R. Marangell, D.E. Pelinovsky, and K. Xiao, {\em Existence of standing waves on a flower graph},
arXiv:2003.09397 (2020).

\bibitem{MarPel} J. Marzuola and D.E. Pelinovsky,  {\em Ground states on the dumbbell graph},
{Applied Mathematics Research Express} {\bf 2016} (2016), 98--145.


\bibitem{MorPizz18} C.Morosi and L.Pizzocchero, {\em On the constants for some fractional Gagliardo-Nirenberg and Sobolev inequalities}
Expo. Math. {\bf 36} (2018) 32-77

\bibitem{NojaBook} {\em Symmetries of Nonlinear PDEs on Metric Graphs and Branched Networks},
Edited by D.Noja and D.E. Pelinovsky, (MDPI, Basel, 2019).

\bibitem{[NPS15]} D. Noja, D. Pelinovsky, and G. Shaikhova, {\em Bifurcation and stability of standing waves in the nonlinear Schr\"odinger
equation on the tadpole graph}, {Nonlinearity} {\bf 28} (2015), 2343--2378.

\bibitem{Pankov18} A. Pankov, {\em Nonlinear Schr\"odinger equations on periodic metric graphs}, Discrete Contin. Dyn. Syst. A {\bf 38} (2018) 697-714.



\bibitem{PSV19} D. Pierotti, N. Soave, and G.Verzini, {\em Local minimizers in absence of ground states for the
critical NLS energy on metric graphs}, Proceedings of the Royal Society of Edinburgh Section A: Mathematics, (2020). Online first, doi: https://doi.org/10.1017/prm.2020.36P

\end{thebibliography}
\end{document}